\def\kk{\mathbb{K}}
\def\ok{\overline{\mathbb{K}}}
\def\NN{\mathbb{N}}
\def\ub{\mathbf{u}}
\def\mfm{\mathfrak{m}}
\def\eval{\mathfrak{e}}
\def\hk{{\mathfrak{h}}}
\def\Sk{{\mathfrak{S}}}
\def\KK{\mathbb{K}}
\def\<{\langle}
\def\>{\rangle}
\newcommand{\invsys}[1]{\langle\langle{#1}\rangle\rangle}
\newcommand{\dual}[1]{#1^{\ast}}
\newcommand{\scp}[1]{\langle #1 \rangle}
\newcommand{\vspan}[1]{\langle #1 \rangle}
\newcommand{\apply}[1]{\langle #1 \rangle}
\newcommand{\apol}[1]{\langle #1 \rangle_{a}}
\newcommand{\set}[1]{\{ #1 \}}
\newcommand{\reg}{\mathrm{reg}}
\def \ann {\mathrm{Ann}}
\def\eval{{\bm e}}
\newcommand{\KKrng}{\KK[\xaff]}
\newcommand{\Okdn}{\cl O_{d}^{k}}
\newcommand{\N}{\mathbb N}
\newcommand{\C}{\mathbb C}
\newcommand{\K}{\mathbb K}
\def\eval{\mathbf{e}}
\def\PolExp{\mathcal{P}ol\mathcal{E}xp}
\newcommand{\cl}[1]{\mathcal{#1}}
\newcommand{\bm}[1]{\mathbf{#1}}
\newcommand{\mf}[1]{\mathfrak{#1}}
\newcommand{\bms}[1]{\boldsymbol{#1}}
\def\xaff{{\bm x}}
\def\xprj{\underline{\bm x}}
\def\Iprj{\underline{I}}
\def\Aprj{\underline{\cl A}}
\def\zaff{{\bm z}}
\def\zprj{\underline{\bm z}}
\def\xiprj{\underline{\xi}}
\def\phiprj{\underline{\varphi}}
\def\Ann{\textup{Ann}}
\def\eval{\bm{e}}
\def\Sone{\S_{1}}
\def\nub{\bms \nu}
\def\bgray{\color{gray}}
\def\egray{\color{black}}
\lstdefinelanguage{Julia}{
  morekeywords={function,return,if,else,for,while,end,using},
  sensitive=true,
  morecomment=[l]\#,
  morestring=[b]",
  alsoletter=0123456789,
  morekeywords=[2]{@polyvar},
  keywordstyle=\color{blue},
  keywordstyle=[2]\color{teal},
  commentstyle=\color{gray}\ttfamily,
  stringstyle=\color{red},
  basicstyle=\ttfamily\footnotesize,
  breaklines=true,
}
\def\S{\mathcal{S}}
\def\Sd{\S_d}
\def\Sk{\S_k}
\def\Sdk{\S_{d-k}}
\def\V{\mathcal{V}}
\def\O{\textnormal{O}}
\def\GL{\textnormal{GL}}
\def\End{\textnormal{End}}
\newcommand{\dointprod}{\mathbin{\lrcorner}} 
\newcommand{\contract}{\dointprod}
\newcommand{\gadrank}{\rank_{\scriptstyle\mathrm{GAD}}}
\newcommand{\cactusrank}{\rank_{\scriptstyle\mathrm{cactus}}}
\title{Generalized Additive Decompositions of Symmetric Tensors}
\author{Enrica Barrilli}
\address{Enrica Barrilli \\ Inria at Université Côte d'Azur \\ 2004 route des Lucioles, 06902 Sophia Antipolis, France}
\email{enrica.barrilli@inria.fr}
\author{Bernard Mourrain}
\address{Bernard Mourrain \\ Inria at Université Côte d'Azur \\ 2004 route des Lucioles, 06902 Sophia Antipolis, France}
\email{bernard.mourrain@inria.fr}
\author{Daniele Taufer}
\address{Daniele Taufer \\ KU Leuven \\ Celestijnenlaan 200A, 3001 Leuven, Belgium}
\email{daniele.taufer@gmail.com}
\date{}
\subjclass{13B25, 14N07, 15A69}
\keywords{Symmetric tensor decomposition, generalized additive decomposition, apolar schemes, GAD rank, cactus rank, Artinian Gorenstein algebras}
\newtheorem{theorem}{Theorem}[section]
\newtheorem{corollary}[theorem]{Corollary}
\newtheorem{lemma}[theorem]{Lemma}
\newtheorem{proposition}[theorem]{Proposition}
\theoremstyle{definition}
\newtheorem{definition}[theorem]{Definition}
\newtheorem{remark}[theorem]{Remark}
\newtheorem{example}[theorem]{Example}
\begin{document}

\maketitle

\vspace{-0.6cm}
\begin{abstract}
This article addresses the Generalized Additive Decomposition (GAD) of symmetric tensors, that is, degree-$d$ forms $f \in \Sd$.
From a geometric perspective, a GAD corresponds to representing a point on a secant of osculating varieties to the Veronese variety, providing a compact and structured description of a tensor that captures its intrinsic algebraic properties.

We provide a linear algebra method for measuring the GAD size and prove that the minimal achievable size, which we call the GAD-rank of the considered tensor, coincides with the rank of suitable Catalecticant matrices, under certain regularity assumptions.
We provide a new explicit description of the apolar scheme associated with a GAD as the annihilator of a polynomial-exponential series.
We show that if the Castelnuovo-Mumford regularity of this scheme is sufficiently small, then both the GAD and the associated apolar scheme are minimal and unique.

Leveraging these results, we develop a numerical GAD algorithm for symmetric tensors that effectively exploits the underlying algebraic structure, extending existing algebraic approaches based on eigen computation to the treatment of multiple points. 
We illustrate the effectiveness and numerical stability of such an algorithm through several examples, including Waring and tangential decompositions.
\end{abstract}

\section{Introduction}

Tensors are fundamental mathematical objects that generalize matrices to higher dimensions and play a crucial role in various domains. While matrices, namely second-order tensors, are well-known and widely used, higher-order tensors offer a more natural representation for data involving more than two interacting variables.
These objects provide an effective means to organize and model such data, capturing intricate relationships across these multiple dimensions.
Consequently, understanding and efficiently manipulating tensors is essential for real-world problems that involve multidimensional data.

Tensor decomposition, i.e., expressing a given tensor as a sum of simpler tensors, has emerged as a key technique in the analysis of higher-order tensors 
for uncovering latent structure in multidimensional data, impacting diverse domains.
In blind source separation, higher-order cumulant tensors provide algebraic structures that enable the identification of independent sources, even in underdetermined mixtures \cite{cardoso1998blind, DeLathauwer2007}, while in independent component analysis (ICA), they support consistent recovery of independent components in overcomplete regimes \cite{hyvarinen2000independent}.
In psychometrics, tensor methods were introduced early through the $N$-way generalization of the Eckart–Young decomposition \cite{carroll1970analysis}, and in medical imaging, spectral decompositions of covariance tensors are central to diffusion MRI for characterizing tissue microstructure \cite{basser2007spectral}.
In telecommunications, multilinear decompositions underpin blind receivers and CDMA separation algorithms \cite{chevalier1999optimal, de2007tensor, sidiropoulos2002blind, van2002analytical}, while more broadly, tensors provide a natural language for sparse representations and uncertainty principles in signal processing \cite{donoho2001uncertainty}.
In statistics and machine learning, moment tensors are now a standard tool for learning latent variable models, enabling efficient algorithms for Gaussian mixtures \cite{ge2015learning, hsu2013learning, pereira2022tensor}, nonparametric mixture models \cite{zhang2023moment}, and hidden Markov models \cite{allman2009identifiability, anandkumar2014tensor}.

Crucially, many of these applications rely not on arbitrary tensors but on symmetric tensors: cumulant tensors, covariance tensors, and moment tensors are all permutation-invariant by construction.
While general-purpose methods such as the Canonical Polyadic Decomposition (CPD) are widely used, they do not exploit the additional structure encoded by symmetry.
In contrast, symmetric tensor decompositions capture the intrinsic algebraic and geometric properties of these problems.
Moreover, techniques from algebraic geometry provide deep insights into the geometric structure of tensors, enabling the development of decomposition algorithms.
This paper contributes to this line of work by presenting an algorithm for the Generalized Additive Decomposition (GAD) of symmetric tensors, which extends the Waring framework.
This generalization offers new techniques for computing compact tensor representations and revealing intrinsic information, thereby broadening their scope and applicability.

\smallskip
\paragraph{\emph{State of the art.}}
The decomposition of a symmetric tensor into powers of linear forms, i.e., the classical Waring decomposition problem, has been deeply investigated from numerical, algebraic, and geometric viewpoints.
Rooted in the works of G. Riche de Prony \cite{baron_de_prony_essai_1795} and J. J. Sylvester \cite{sylvester1851lx}, foundational contributions based on apolarity theory and catalecticant constructions \cite{Iarrobino1999, landsberg2015new} have provided the basic framework to interpret tensor decompositions in terms of apolar ideals, and secant varieties of the Veronese \cite{Bernardi2007}.
This geometric setting has been refined through the study of generalized eigenvectors and roots of algebraic equations \cite{BernardiReigFite2025, Bernardi20201, Brachat2010, oeding2013eigenvectors, qi2018tensor}, as well as through the explicit determination of equations defining secant and cactus varieties \cite{buczynski2024cactus,landsberg2013equations}.

Parallel developments have focused on computational efficiency, leveraging simultaneous diagonalization \cite{comon2009tensor}, FOOBI-type and eigen decomposition algorithms \cite{anandkumar2014tensor, de2000multilinear}, and the moment matrix extension framework \cite{Bernardi2013, Brachat2010, shi2025efficient, Telen2022}, which connects apolarity to structured linear algebra.
In addition, several numerical and geometric approaches have been explored, including alternating least squares \cite{kolda2009tensor}, power iteration schemes \cite{zhang2001rank}, and homotopy continuation methods \cite{bernardi2017tensor}.

The problem of computing GADs of a given symmetric tensor, as introduced in \cite{Iarrobino1999}, is closely related to the study of Hilbert schemes of points \cite{buczynski2024cactus,Buczyska2013}, since every GAD canonically defines a (possibly non-reduced) zero-dimensional projective scheme apolar to the considered tensor \cite{bernardi2018polynomials,Bernardi2024}.
The Hilbert function of such schemes is what governs the GAD complexity \cite{Buczyska2013,TonyMemoir}, and understanding these functions for prescribed tensors remains an open and significant research challenge.

\smallskip
\paragraph{\emph{Novel contributions.}}
In this paper, we provide a detailed study of GADs of symmetric tensors $f$ of order $d$ or, equivalently, degree-$d$ forms $f\in \Sd$. 
We provide a geometric interpretation of GADs as a representation of points on secants of the smooth part of osculating varieties.
We prove that the GAD-size computation can be performed entirely via linear algebra, improving previous quasi-linear constructions, by evaluating the dimensions of inverse systems associated with prescribed forms.
We define the minimum of such sizes as the \emph{GAD-rank} $\gadrank(f)$.
We show that a minimal GAD of $f$, i.e., a GAD realizing $\gadrank(f)$, can be computed from certain Catalecticant matrices, under regularity assumptions.
We provide a new explicit description of the corresponding apolar scheme as annihilators of polynomial-exponential series (see \Cref{thm:rk gad,thm:rk cactus} for more details):

\smallskip
\noindent \textbf{Theorem.} {\it Let $f\in \Sd$ with a GAD of the form $f= \sum_i \omega_i \ell_i^{d-k_i} $ such that $\omega_i \in \S_{k_i}$ and $\ell_i \in \Sone$.
If the Castelnuovo-Mumford regularity of $\S/\underline{I}$ is less than  $\frac{d+1}{2}$, where $\underline{I}$ is the ideal associated with the GAD, then 
$$
\gadrank(f) = \cactusrank(f)= \rank H, 
$$
where $\cactusrank(f)$ is the minimal length of a scheme apolar to $f$, and $H$ is the Catalecticant matrix of $f$ in degree $(d-c,c)$ with $c= \lfloor \frac{d-1}{2} \rfloor$.
Moreover, in this case, the considered GAD is the unique minimal one, and defines the unique minimal apolar scheme to $f$.
}

\smallskip
As a consequence, when $\omega_i \in \kk=\S_0$ and the regularity of the quotient by the vanishing ideal of the points $\ell_i\in \Sone$ is less than $\frac{d+1}{2}$, we deduce that $f$ has a unique Waring decomposition.

This leads to a numerical GAD algorithm for symmetric tensors, which runs as if the regularity of the unknown decomposition were less than $\frac{d+1}{2}$ and can verify the hypothesis a posteriori.
This algorithm refines \cite[Algorithm 3]{Bernardi20201}, establishing terminating conditions that prevent the issue associated with high nil-indices (equiv. high socle degrees for the associated schemes).
Its algorithmic realization significantly extends algebraic approaches based on eigen computation by providing a flexible and efficient decomposition technique that numerically handles tensor decompositions corresponding to multiple points.
The GAD algorithm achieves optimal stability in Waring decomposition when the regularity of the associated ideal is below $\frac{d+1}{2}$, and it likewise demonstrates robust performance for symmetric tensor decompositions with multiple points, which is tested on several examples.
By addressing the limitations of existing algorithms, we provide a method that is more general and applicable to a broader range of problems, and could pave the way for computing minimal apolar schemes and cactus ranks using extension techniques such as \cite{Bernardi20201,Brachat2010}.

\medskip
\paragraph{\emph{Paper structure.}}
In \Cref{sec:Preliminaries}, we recall the mathematical background and key results about apolarity, which will be used throughout the paper.
\Cref{sec:GAD} is devoted to defining GADs, their rank, and their geometrical interpretation.
General properties of Artinian algebras are discussed in \Cref{sec:algop-GAD}, and are used to prescribe algebraic properties of the operators and algebras constructed from a GAD.
These properties are then employed in \Cref{sec:ApolarSchemes} to define zero-dimensional apolar schemes from GADs, which are used to prove the uniqueness of GADs and minimal apolar schemes under regularity assumptions.
Finally, in \Cref{sec:AlgoExp}, we employ the results of the previous sections to design an algorithm for recovering such a unique minimal GAD, prove its effectiveness, and discuss its numerical robustness.


 



\section{Preliminaries} \label{sec:Preliminaries}
Let $\KK$ be a field of characteristic $0$.
We employ the standard multi-index notation: for any $\alpha = (\alpha_{0}, \ldots, \alpha_{n})\in \NN^{n+1}$, we denote
$$
|\alpha| := \alpha_0+ \cdots + \alpha_n, \quad \alpha! := \alpha_{0}! \cdots \alpha_{n}!, \quad \textnormal{and} \quad {d \choose \alpha} := \frac{d!}{\alpha!}.
$$
For any $\KK$-vector space $V$ and $d \in \NN$, let $\Sd(V)$ be the $\KK$-vector space generated by the $d^{\mathrm{th}}$-symmetric power of $V$.
When $V = \kk^{n+1}$, we identify this space with $\kk[x_0, \ldots, x_n]_{d} =\KK[\xprj]_{d}$, the space of homogeneous degree-$d$ polynomials in the variables $\xprj = (x_0, \ldots, x_n)$, and we shortly denote it by $\S_d$.
A non-zero element in $\Sd$ is also called a \emph{form} of degree $d$.
We denote the full graded algebra of forms (of any degree) by $\S := \oplus_{d = 0}^{\infty} \S_d$.

The dual space $\dual{\Sd} := \hom_{\kk}(\Sd,\kk)$ is the set of $\kk$-linear functionals from $\Sd$ to $\kk$.
For a given $\varphi \in \dual{\Sd}$, we denote its application to $f \in \Sd$ by $\scp{\varphi,f} := \varphi(f)$.
An element $\varphi \in \dual{\Sd}$ is represented uniquely as a \emph{differential polynomial}
$$
\varphi(\zprj) = \sum_{|\alpha|=d} \varphi_{\alpha} \frac {\zprj^{\alpha}} {\alpha!},
$$
where $\varphi_{\alpha}:=\scp{\varphi,\xprj^{\alpha}}\in \kk$ is the value of the linear functional $\varphi$ on $\xprj^{\alpha}\in \Sd$, and
$$
\zprj^{\alpha} : \S \to \KK, \quad p \mapsto \partial_{\xprj}^{\alpha}(p)(0) := ( \partial_{x_0}^{\alpha_0} \cdots \partial_{x_n}^{\alpha_n} p)(0).
$$
In this representation, $(\frac {\zprj^{\alpha}} {\alpha!})_{|\alpha|=d}$ is the dual basis in $\dual{\Sd}$ of the monomial basis $( {\xprj^{\alpha}})_{|\alpha|=d}$ of $\Sd$.
We verify for monomials and by linearity for all $g(\zprj) \in \dual{\Sk}$, $h(\zprj)\in \dual{\cl S_{d-k}}$, and $f\in \Sd$, that we have 
\begin{equation}{\label{eq:proddual}}
\scp{gh(\zprj),f} = \scp{h(\zprj), g(\partial_{\xprj})(f)} := \scp{h(\zprj), g(\partial_{x_0}, \ldots, \partial_{x_n})(f)}.
\end{equation}

For any $\bm \xiprj = (\xiprj_0, \ldots, \xiprj_n) \in \kk^{n+1} $, we consider the associated linear form $ \ell = (\bm \xiprj, \xprj) := \sum_{i=0}^{n} \xiprj_i\, x_i\in \Sone$, and we denote the corresponding exponential series by
$$
   \eval_{\xiprj}(\zprj) := \eval_{\ell}(\zprj) = e^{\ell(\zprj)} = \sum_{k \in \NN} \frac{\ell(\zprj)^k}{k!} = \sum_{\alpha \in \NN^{n+1}} \frac{\xiprj^{\alpha} \zprj^{\alpha}}{\alpha!} \in \kk[[\zprj]]= \dual{\S}.
$$
We check from the definitions that $\eval_{\xiprj}(\zprj)$ represents the evaluation-in-$\xiprj$ linear functional 
$\eval_{\xiprj}: p\in \cl S\mapsto \scp{\eval_{\xiprj}, p}= p(\xiprj)$. 
More generally, by \cref{eq:proddual} and linearity, for $\omega \in \cl S$ and $\xiprj \in \kk^{n+1}$, we verify that the linear functional $\omega(\zprj) \eval_{\xiprj}(\zprj)\in \kk[[\zprj]]= \dual{\cl S}$ represents
\begin{equation}\label{eq:polyexprep}
\omega \, \eval_{\xiprj} : \S \to \kk, \quad p \mapsto \scp{\omega(\zprj) \eval_{\xiprj}(\zprj),p} = \omega(\partial_{\xprj})(p)(\xiprj).
\end{equation}

For any $p \in \S$ and $\varphi\in \dual{\S}$, we define $p \star \varphi \in \dual{\S}$ as the linear functional such that, $\forall q \in \S$,
$$
\scp{p \star \varphi,  q } := \scp{\varphi, p q }.
$$
This \emph{transposed} product is also known as \emph{co-product} or \emph{contraction}.
Note that the operator $\S^* \to \S^*,\ \varphi \mapsto p \star \varphi$ is the transposed operator of $\S \to \S,\ q \mapsto pq$.
We verify that, for any $p\in \S$ and $\varphi\in \dual{\S}$, the operator $p\star \varphi$ is represented by the differential polynomial
$$
 p \star \varphi = p(\partial_{\zprj})\big(\varphi(\zprj)\big) := p(\partial_{z_0}, \ldots, \partial_{z_n}) \big(\varphi(\zprj)\big),
$$
 where $p(\partial_{z_0}, \ldots, \partial_{z_n})$ is the differential polynomial obtained by substituting the variable $x_i$ by the derivation $\partial_{z_i}$ with respect to the (formal) variable $z_i$.

\subsection{Apolarity}

Let us fix a basis $\xprj = (x_0, \dots, x_n)$ of $\cl S_1$, which naturally determines, for every $d \in \NN$, a monomial basis $\set{\xprj^{\alpha}}_{|\alpha|=d}$ of $\cl S_d$.
Given two forms $f\, =\, \sum_{\vert\alpha\vert=d} f_{\alpha} \xprj^{\alpha}$ and $g\, =\, \sum_{\vert \alpha\vert=d} g_{\alpha} \xprj^{\alpha}\in \Sd$, we define their \emph{apolar product} (with respect to $\xprj$) as
\begin{align}\label{eq:Bomb}
  \apol{f,g}\ :=\ \sum_{\vert\alpha\vert=d} {d \choose \alpha}^{-1} f_{\alpha} g_{\alpha} 
  = \frac{1} {d!} g(\partial_{\xprj})(f).
\end{align}
It is easy to verify that \cref{eq:Bomb} defines a symmetric, non-degenerate bilinear form.
This apolar product depends on the choice of the coordinates $\xprj$ in $\cl S_1$. 
It is invariant by the orthogonal group $\O(\kk^{n+1})$ \cite[Section 12.1]{unitary}
but not by the linear group $\GL(\kk^{n+1})$.
We easily verify that the monomial basis $(\xprj_{\alpha})_{|\alpha|=d}$ of $\Sd$ is orthogonal for the apolar product with respect to $\xprj$.


\begin{lemma}\label{lem:apol prop}
For every $f \in \Sd$, $g \in \Sk$, $h \in \Sdk$, and $\bm \xi \in \kk^{n+1}$, we have
\begin{enumerate}
    \item\label{apol-i} $\apol{f, g\,h}\ =\ \frac{(d-k)!}{d!}\apol{g(\partial_{\xprj})(f), h}$,
    \item\label{apol-ii} $\apol{f, (\bm{\xi}, \xprj)^d}\ =\ f(\bm{\xi})$,
    \item\label{apol-iii} $\apol{f, g\, (\bm \xi,\xprj)^{d-k}}\ =\ \frac{(d-k)!}{d!} g(\partial_{\xprj}) (f)(\bm{\xi})$.
\end{enumerate}
\label{properties}
\end{lemma}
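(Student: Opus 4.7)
The plan is to reduce all three statements to two basic ingredients: the ring-homomorphism property
\[
(gh)(\partial_{\xprj})\ =\ h(\partial_{\xprj})\circ g(\partial_{\xprj})
\]
of the correspondence $p\mapsto p(\partial_{\xprj})$ from $\S$ into the constant-coefficient differential operators on $\S$, together with the Euler-type identity $(\bm\xi,\partial_{\xprj})^{d}f = d!\, f(\bm\xi)$ valid for any $f\in\Sd$.

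For (i), I start from the definition $\apol{f,gh} = \tfrac{1}{d!}(gh)(\partial_{\xprj})(f)$, apply the ring-homomorphism identity to factor this as $\tfrac{1}{d!}\,h(\partial_{\xprj})\bigl(g(\partial_{\xprj})(f)\bigr)$, and note that $g(\partial_{\xprj})(f)\in\S_{d-k}$. Recognising the apolar product of this form with $h\in\S_{d-k}$ recasts the expression as $\tfrac{(d-k)!}{d!}\apol{g(\partial_{\xprj})(f),h}$, which is exactly (i).

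For (ii), I would verify the Euler-type identity on the monomial basis: for $f=\xprj^{\alpha}$ with $|\alpha|=d$, the multinomial expansion gives
\[
(\bm\xi,\partial_{\xprj})^{d}\xprj^{\alpha}\ =\ \sum_{|\beta|=d}\binom{d}{\beta}\bm\xi^{\beta}\,\partial_{\xprj}^{\beta}\xprj^{\alpha},
\]
and since $\partial_{\xprj}^{\beta}\xprj^{\alpha}$ vanishes unless $\beta\le\alpha$ componentwise, the degree constraint $|\beta|=|\alpha|=d$ forces $\beta=\alpha$; only the term $\binom{d}{\alpha}\alpha!\,\bm\xi^{\alpha} = d!\,(\xprj^{\alpha})(\bm\xi)$ survives. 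Linearity extends this identity to all of $\Sd$, and dividing by $d!$ yields precisely (ii).

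Finally, (iii) is obtained by composing (i) with $h = (\bm\xi,\xprj)^{d-k}$, which reduces the pairing to $\tfrac{(d-k)!}{d!}\apol{g(\partial_{\xprj})(f),(\bm\xi,\xprj)^{d-k}}$, and then invoking (ii) in degree $d-k$ on $g(\partial_{\xprj})(f)\in\S_{d-k}$ to evaluate the inner apolar pairing at $\bm\xi$. No step is genuinely difficult; the only bookkeeping to track is that the factorial normalization in the apolar product changes from $\tfrac{1}{d!}$ to $\tfrac{1}{(d-k)!}$ when passing from degree $d$ to degree $d-k$, and this accounts for the $\tfrac{(d-k)!}{d!}$ factor that appears in both (i) and (iii).
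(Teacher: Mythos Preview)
Your proof is correct and follows essentially the same approach as the paper: part (i) is obtained from the definition and the commutativity of constant-coefficient differential operators, part (ii) by direct computation (which the paper leaves as ``straightforward'' while you helpfully spell it out on the monomial basis), and part (iii) by combining the two.
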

\begin{proof}
Equality \ref{apol-i} follows from \cref{eq:Bomb}, since
$$
\apol{f, g h} =\frac 1 {d!} g(\partial_{\xprj}) h(\partial_{\xprj}) (f)
= \frac 1 {d!}  h(\partial_{\xprj})  
\big(g(\partial_{\xprj})(f)\big)= \frac {(d-k)!} {d!}
\apol{h, g(\partial_{\xprj})(f)},
$$
while \ref{apol-ii} follows by straightforward computation.
By combining \ref{apol-i} and \ref{apol-ii}, we immediately obtain \ref{apol-iii}.
\end{proof}


\begin{definition}\label{def:apolar duality}
For $f \in \Sd$, we define $f^* \in \dual{\Sd}$ by $\scp{f^*,p} := \apol{f,p}$.
\end{definition}
From the above definition and by linearity, we directly verify that, for any $f = \sum_{|\alpha|=d} f_{\alpha}\, \xprj^{\alpha} \in \Sd$, we have
$$
    f^*= \frac 1 {d!} \sum_{|\alpha|=d} f_{\alpha}\, \zprj^{\alpha} = \frac 1 {d!} f(\zprj) \in \Sd^*.
$$
Given a functional $\varphi = \sum_{\alpha \in \NN^{n+1}} \varphi_{\alpha} \zprj^{\alpha} \in \S^*$, we denote its \emph{degree-d part} as the functional $\varphi^{[d]} := \sum_{|\alpha|=d}\varphi_{\alpha}\zprj^{\alpha} \in \Sd^*$.


Let $\ell = \scp{\xiprj,\xprj}\in \Sone$ and let $\bm v= \set{v_1, \ldots, v_n}\subset \Sone$ such that $\set{\ell, v_1, \ldots, v_n}$ is a basis of 
$\Sone$.
Any $\omega \in \Sk$ can be decomposed uniquely as 
$\omega = \sum_{j=0}^{k} \omega_{j} \ell^{k-j}$ with $\omega_j \in \S_{j}(\bm v)$.
We define 
\begin{equation}\label{eq:omega v}
    \omega^{d, \ell, {\bm v}} := \frac{1}{d!}\sum_{j=0}^{k}  {(d-j)!}\, \omega_j(\bm v) \in \S(\bm v).
\end{equation}
Note that $\omega^{d, \ell, {\bm v}} \in \S(\bm v) \subset \S$ can be seen as a polynomial $\omega^{d, \ell, {\bm v}}(\xprj)$, since every $v_i \in \bm v$ is a linear form in $\S_1$.
As usual, we write $\omega^{d,\ell, \bm v}(\zprj)$ to denote its evaluation in the $\zprj$-variables.

\begin{lemma}\label{lem:w xi dual}
Let $f = \omega\, \ell^{d-k}\in \Sd$ with $d\ge k$, $\omega \in \Sk$, and $\ell=(\xiprj, \xprj)$ with $\xiprj\in \kk^{n+1}$.
Then, for any ${\bm v}$ as above, we have
$$
f^* =\big(\omega^{d,\ell, \bm v}(\zprj) \eval_{\ell}(\zprj)\big)^{[d]} 
\in \Sd^*.
$$
\end{lemma}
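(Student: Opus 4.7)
The plan is to show equality directly by computing both sides as polynomials in $\zprj$ and comparing them term by term, using the identity $f^* = \frac{1}{d!} f(\zprj)$ that was established just before the lemma.

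First, I would decompose $\omega$ in the given basis. Since $\{\ell, v_1,\dots,v_n\}$ is a basis of $\Sone$, the assumption $\omega = \sum_{j=0}^{k} \omega_j\, \ell^{k-j}$ with $\omega_j \in \S_j(\bm v)$ is nothing but the expansion of $\omega$ as a polynomial in these coordinates. Multiplying by $\ell^{d-k}$, this yields
\[
f = \omega\, \ell^{d-k} = \sum_{j=0}^{k} \omega_j\, \ell^{d-j}.
\]
Substituting $\xprj$ by $\zprj$ and applying \Cref{def:apolar duality}, I would obtain
\[
f^* = \tfrac{1}{d!}\, f(\zprj) = \frac{1}{d!}\sum_{j=0}^{k} \omega_j(\zprj)\, \ell(\zprj)^{d-j}.
\]

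Next, I would compute the degree-$d$ part of the right-hand side. Using the series expansion $\eval_\ell(\zprj) = \sum_{m \ge 0} \frac{\ell(\zprj)^m}{m!}$ and the definition in \cref{eq:omega v}, I would write
\[
\omega^{d,\ell,\bm v}(\zprj)\, \eval_\ell(\zprj) = \frac{1}{d!} \sum_{j=0}^{k} (d-j)!\, \omega_j(\zprj) \sum_{m \ge 0} \frac{\ell(\zprj)^m}{m!}.
\]
Since $\omega_j(\zprj)$ is homogeneous of degree $j$ in $\zprj$ and $\ell(\zprj)^m$ is homogeneous of degree $m$, the only contributions of total degree $d$ are those with $m = d-j$. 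Extracting this selection gives
\[
\bigl(\omega^{d,\ell,\bm v}(\zprj)\, \eval_\ell(\zprj)\bigr)^{[d]} = \frac{1}{d!} \sum_{j=0}^{k} (d-j)!\, \omega_j(\zprj)\, \frac{\ell(\zprj)^{d-j}}{(d-j)!} = \frac{1}{d!} \sum_{j=0}^{k} \omega_j(\zprj)\, \ell(\zprj)^{d-j},
\]
which matches the expression for $f^*$ derived above.

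There is no real obstacle here; the only point requiring care is keeping the factorials straight, which is precisely why $\omega^{d,\ell,\bm v}$ is defined with the factor $(d-j)!/d!$ in \cref{eq:omega v}: it is tailored to cancel the $1/m!$ arising from the exponential series at $m=d-j$. Thus the proof reduces to the routine verification above, and no subtler argument is needed.
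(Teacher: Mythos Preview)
Your proof is correct and follows essentially the same approach as the paper: both decompose $\omega=\sum_{j=0}^k\omega_j\,\ell^{k-j}$, use $f^*=\tfrac{1}{d!}f(\zprj)$ to obtain $f^*=\tfrac{1}{d!}\sum_j\omega_j(\zprj)\ell(\zprj)^{d-j}$, and then recognize this as the degree-$d$ part of $\omega^{d,\ell,\bm v}(\zprj)\,\eval_\ell(\zprj)$ by picking out the $\ell(\zprj)^{d-j}/(d-j)!$ term of the exponential. Your exposition is slightly more explicit about the extraction step, but the argument is the same.
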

\begin{proof}
For $\omega = \sum_{j=0}^k \omega_j \ell^{k-j}$ with $\omega_j \in \S_j(\bm v)$, we have
\begin{align*}
f^* & 
= \frac{1}{d!} \omega(\zprj) \, \ell(\zprj)^{d-k} 
= \frac{1}{d!}  \sum_{j=0}^{k}\omega_j(\zprj) \ell(\zprj)^{d-j} 
= \frac{1}{d!} \sum_{j=0}^{k}\omega_j(\zprj) (d-j)! \, \frac{\ell(\zprj)^{d-j}}{(d-j)!} \\
& 
= 
\big(
 \frac{1}{d!} \sum_{j=0}^{k} (d-j)!\,\omega_j(\zprj)  \eval_{\ell}(\zprj) \big)^{[d]}
= \big(
 \omega^{d,\ell,\bm v}(\zprj) \eval_{\ell}(\zprj) \big)^{[d]},
\end{align*}
which proves the lemma.
\end{proof}

\subsection{Affine duality}\label{sec:affine duality}
For the decomposition of forms, we will study Artinian algebras, defined in an \emph{affine} setting, by localization of projective spaces.
In this section, we recall the notation and properties of the corresponding duality.

Let $R = \KK[x_1, \ldots, x_n] = \KKrng$ be the polynomial ring over $\kk$ in the variables $\xaff = (x_1, \ldots, x_n)$.
Its dual space is $\dual{R} = \hom_{\kk}(R, \kk)$.
Similarly to the homogeneous setting, any linear functional $\varphi \in \dual{R}$ can be represented as a formal power series in $\zaff=(z_1,\ldots, z_n)$:
$$
 \varphi(\bm z)= \sum_{\alpha \in \NN^n} \varphi_{\alpha} \, \frac{\bm z^{\alpha}}{\alpha!} \in \kk[[\bm z]]
$$
where $\varphi_{\alpha} := \scp{\varphi,\xaff^{\alpha}} = \varphi(\xaff^{\alpha}) $ is the value of the linear functional $\varphi$ on the monomial $\xaff^{\alpha}$, 
and $\zaff^{\alpha}$ represents the linear functional 
$$
\zaff^{\alpha}: R \to \kk, \quad p \mapsto \partial_{\xaff}^{\alpha}(p)(0) := ( \partial_{x_1}^{\alpha_1} \cdots \partial_{x_n}^{\alpha_n} p)(0).
$$
As in the homogeneous case, for $g(\zaff) \in \kk[\zaff]$, $\varphi(\zaff)\in \dual{R}$, and $p\in R$, we have
$$
\scp{g\, \varphi, p}= \scp{\varphi, g(\partial_{\xaff})(p)}. 
$$
The dual space $\dual{R}$ is again equipped with a \emph{co-product}:
$p \star \varphi: R \to \kk, \ q \mapsto \varphi(p q)$, which is represented by the series
\begin{equation}\label{eq:star prop}
p\star \varphi = p(\partial_{\zaff}) \varphi(\bm z) \in \kk[[\bm z]].
\end{equation}

For an ideal $I \subset R$, we define 
$$
I^{\perp} := \{\varphi \in \dual{R} \mid \forall p\in I, \varphi (p)=0 \}.
$$
We identify $I^{\perp}$ with the dual  $\dual{\cl A}$ of $\cl A = R/I$.
One easily sees that for all $p \in R$ and $\varphi \in I^{\perp}$, we have $p \star \varphi \in I^{\perp}$, namely $I^{\perp} \subset \KK[[\zaff]]$ is stable by derivations with respect to $\zaff$, as in \cref{eq:star prop}. 
Given $\varphi \in \dual{R}$, we define the \emph{inverse system} spanned by $\varphi$ as the vector space 
$$
\invsys{\varphi} := R \star \varphi 
= \{ p \star \varphi \mid p \in R \} 
= \{ p(\partial_{\zaff}) \varphi(\zaff) \mid p \in R \}
\subset \dual{R}.
$$
We notice that $\invsys{\varphi}^{\perp} := \{ p \in R \mid \forall \varphi'\in \invsys{\varphi}, \varphi'(p)= 0 \}$ is an ideal of $R$.
For $\omega \in R$, we also define the \emph{inverse system} of $\omega$ as 
$$
\invsys{\omega} := \vspan{ p(\partial_{\xaff})(\omega)\ | \ p \in R}\subset R.
$$

We define the set of \emph{polynomial-exponential series} as
$$
\PolExp := \left\{ \sum_{i= 1}^r \omega_i(\zaff) \, \eval_{\xi_i}(\zaff) \in \kk[[\zaff]] \ \big\mid \ \omega_i(\zaff)\in \kk[\zaff], \, \xi_i \in \kk^{n}\right\} \subset \kk[[\zaff]],
$$
where $\eval_{\xi}(\zaff) := e^{\scp{\xi, \zaff}} = e^{\xi_1 z_1 + \dots + \xi_n z_n} =\sum_{\alpha\in \NN^n} \xi^{\alpha} \frac{\zaff^{\alpha}} {\alpha!}$ represents the \emph{affine evaluation} map: $\scp{\eval_{\xi},p} = p(\xi)$.
Consequently, a straightforward computation gives
\begin{equation} \label{eq:exprule}
x_k \star \big(\omega(\zaff) \eval_{\xi}(\zaff)\big) = \big(\partial_{z_k} \omega(\zaff) + \xi_{k} \omega(\zaff) \big) \eval_{\xi}(\zaff),
\end{equation}
from which we deduce that the inverse system spanned by $\omega(\zaff) \eval_{\xi}(\zaff)$ is
$$
\invsys{\omega(\zaff) \eval_{\xi}(\zaff)}
= \big\langle g(\partial_{\zaff})\big(\omega(\zaff)\big) \mid g\in R \big\rangle\, \eval_{\xi}(\zaff)
= \invsys{\omega}(\zaff)\, \eval_{\xi}(\zaff).
$$
It is a finite-dimensional vector space, since $\omega \in \kk[\xaff]$ is a polynomial.
Its dimension $\dim \vspan{g(\partial_{\xaff})(\omega) \mid g\in R}$ is called the \emph{multiplicity} of $\omega$.

We define the degree-$d$ \emph{homogenization} map (with respect to $x_0$) by
$$
\mathfrak{h}_0 : \KKrng_{\le d} \to \Sd, \quad 
p \mapsto \mathfrak{h}_{0}(p) := x_0^{d} p\left(\frac{x_1}{x_0} , \ldots, \frac{x_n}{x_0} \right).
$$
Its transposed map will therefore be
$$
\dual{\mathfrak{h}_{0}}: \dual{\Sd} \to \dual{\KKrng_{\le d}}, \quad \varphi \mapsto \dual{\mathfrak{h}_0}(\varphi) := \varphi \circ \mathfrak{h}_{0}.
$$
For any $f= \sum_{\alpha \in \NN^{n+1},|\alpha|=d} f_{\alpha} \, \xprj^{\alpha}\in \Sd$, we have $f^{*} = \frac{1}{d!} \sum_{\alpha \in \NN^{n+1},|\alpha|=d} f_{\alpha} \zprj^{\alpha} \in   \dual{\Sd}$ and we define the linear functional $\check{f} \in \dual{\KKrng_{\le d}}$ as
\begin{equation} \label{eq:fcheck}
\check{f} := \dual{\mathfrak{h}_0}(f^*) = \frac{1}{d!} \sum_{\alpha \in \NN^{n+1}, |\alpha|=d} \alpha_0! f_{\alpha}\,  z_1^{\alpha_1} \cdots z_n^{\alpha_n} = \frac{1}{d!} \sum_{\beta\in \NN^{n},|\beta|\le d} (d-|\beta|)!\,  f_{\underline{\beta}}\,  \zaff^{\beta},
\end{equation}
where $\underline{\beta}=(d-|\beta|, \beta_1, \ldots, \beta_n)\in \NN^{n+1}$ for $\beta=(\beta_1,\ldots, \beta_n)\in \NN^{n}$.

\begin{remark} \label{rmk:check_vs_dp}
The functional $\check{f}$ of \cref{eq:fcheck} may be interpreted in terms of polynomial contraction $\zaff^{\beta} \contract \xaff^{\alpha} := \xaff^{\alpha-\beta}$. In fact, if we denote the \emph{divided powers} expression of $f$ by $f_{\textnormal{dp}} := \sum_{\alpha \in \NN^{n+1},|\alpha|=d} f_{\alpha} \alpha! \, \xprj^{\alpha}\in \Sd$ \cite{Bernardi2024}, then
\[ \scp{\check{f},g}= \big( g \contract f_{\textnormal{dp}}\big)(1,0, \dots,0) . \]
Furthermore, by inspecting (\ref{eq:fcheck}), we observe that the coefficient of $\zaff^\beta$ in $\check{f}$ equals the coefficient of $\xaff^\beta$ in $\frac{1}{d!}\textnormal{Tw}^{-1}(f)(1,x_1,\dots,x_n)$, where $\textnormal{Tw}(f)$ is the \emph{twist} of $f$ as defined in \cite{SymPows}.
\end{remark}

The following lemma is the affine version of \Cref{lem:w xi dual}.

\begin{lemma}\label{lem:polexp trunc}
Let $f = \omega\, \ell ^{d-k} \in \Sd$, where $\ell = x_0 +(\xi, \xaff) \in \Sone$ with $\xi\in \kk^{n}$.
Then we have
$$
\check{f} = \big(\omega^{d,\ell,\xaff}(\zaff)\, \eval_{\xi}(\zaff)\big)^{[\le d]}.
$$
\end{lemma}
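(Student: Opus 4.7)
The plan is to deduce this from the homogeneous \Cref{lem:w xi dual} via the duality $\dual{\mathfrak{h}_0}$. First I would apply that lemma with the specific choice $\bm v = \xaff = (x_1,\dots,x_n)$: since $\ell = x_0 + (\xi,\xaff)$ has nonzero $x_0$-coefficient, the set $\{\ell, x_1, \ldots, x_n\}$ is indeed a basis of $\Sone$. This yields
$$
f^* = \bigl(\omega^{d,\ell,\xaff}(\zprj)\, \eval_{\ell}(\zprj)\bigr)^{[d]} \in \Sd^*.
$$

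Next I would factor the exponential. Because $\ell(\zprj) = z_0 + \xi_1 z_1 + \cdots + \xi_n z_n$, we have $\eval_{\ell}(\zprj) = e^{z_0}\,\eval_{\xi}(\zaff)$. By definition \eqref{eq:omega v}, the polynomial $\omega^{d,\ell,\xaff}$ lies in $\kk[\xaff]$, so $\omega^{d,\ell,\xaff}(\zprj)$ involves only the variables $\zaff$; abbreviating $g(\zaff) := \omega^{d,\ell,\xaff}(\zaff)\,\eval_{\xi}(\zaff) = \sum_{\beta} g_\beta\, \zaff^\beta$, the expression for $f^*$ expands as
$$
f^*(\zprj) = \sum_{|\beta|\le d} g_\beta\, \frac{z_0^{d-|\beta|}}{(d-|\beta|)!}\, \zaff^\beta.
$$

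Finally I would push this through $\dual{\mathfrak{h}_0}$. A direct computation from the definitions shows that if $\varphi(\zprj) = \sum_{|\alpha|=d} \varphi_\alpha\, \zprj^\alpha/\alpha!$, then $\dual{\mathfrak{h}_0}(\varphi)(\zaff) = \sum_{|\beta|\le d} \varphi_{\underline{\beta}}\, \zaff^\beta/\beta!$. Matching divided-power coefficients in the formula for $f^*$ above gives $f^*_{\underline{\beta}} = \beta!\, g_\beta$, so
$$
\check{f} = \sum_{|\beta|\le d} g_\beta\, \zaff^\beta = g(\zaff)^{[\le d]} = \bigl(\omega^{d,\ell,\xaff}(\zaff)\,\eval_{\xi}(\zaff)\bigr)^{[\le d]},
$$
as claimed. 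The main technical point, and the only real obstacle, is reconciling the three sources of factorials (those from the divided-power normalization $\zprj^\alpha/\alpha!$, those from $e^{z_0} = \sum_m z_0^m/m!$, and those implicit in the identity $\underline{\beta}! = (d-|\beta|)!\,\beta!$); once this bookkeeping is carried out, the cancellations are automatic and produce the stated equality.
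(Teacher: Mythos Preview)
Your argument is correct. You invoke \Cref{lem:w xi dual} with $\bm v=\xaff$, factor $\eval_\ell(\zprj)=e^{z_0}\,\eval_\xi(\zaff)$, extract the degree-$d$ part as a sum over $\beta$, and then push through $\dual{\mathfrak{h}_0}$ by a direct coefficient comparison; the factorial bookkeeping you flag is the only delicate point and you have handled it correctly.

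The paper takes a different, somewhat shorter route: rather than going through \Cref{lem:w xi dual} and then dehomogenizing coefficient-by-coefficient, it proves the equality of linear functionals on $R_{\le d}$ by computing the pairing $\langle \check f, p\rangle=\langle f,\mathfrak{h}_0(p)\rangle_a$ directly via \Cref{lem:apol prop}\,\ref{apol-iii}. Expanding $\omega=\sum_j \omega_j\,\ell^{k-j}$ and using that each $\omega_j$ only involves $\xaff$, one obtains $\langle \check f,p\rangle=\omega^{d,\ell,\xaff}(\partial_{\xaff})(p)(\xi)$, which by \eqref{eq:polyexprep} is exactly $\langle\omega^{d,\ell,\xaff}(\zaff)\eval_\xi(\zaff),p\rangle$. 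This avoids any explicit manipulation of series coefficients or factorials and is independent of \Cref{lem:w xi dual}. Your approach, by contrast, makes transparent that \Cref{lem:polexp trunc} is precisely the image of \Cref{lem:w xi dual} under $\dual{\mathfrak{h}_0}$, at the cost of the coordinate bookkeeping.
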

\begin{proof} Let $\omega = \sum_{j=0}^k\omega_j\, \ell^{k-j} \in \Sk$ with $\omega_j \in \S_j(\xaff)$, so $\omega^{d,\ell, \xaff}(\zaff) =\frac 1 {d!} \sum_{j=0}^{k} (d-j)!\, \omega_j(\zaff)$ as defined in \cref{eq:omega v}.
For $p \in \KKrng_{\le d}$, let us shortly denote $p^h = \hk_{0}(p)$. By \Cref{lem:apol prop}, 
we have
\begin{align*}
\scp{\check{f},p}  & = \apol{f, p^h} 
= \sum_{j=0}^{k} \apol{\omega_j \ell^{d-j}, p^h}= \frac 1 {d!} \sum_{j=0}^{k} {(d-j)!}\, \omega_j(\partial_{\xprj})(p^h)(1, \xi_1, \dots, \xi_n) \\
 & = \omega^{d,\ell, \xaff}(\partial_{\xprj})(p^h)(1, \xi_1, \dots, \xi_n).
\end{align*}
Since $\omega_j \in \S_j(\xaff)$, then also $\omega^{d,\ell, \xaff} \in \S(\xaff)$, thus
$$
 \omega^{d,\ell, \xaff}(\partial_{\xprj})(p^h)(1, \xi_1, \dots, \xi_n) = \omega^{d,\ell, \xaff}(\partial_{\xaff})(p)(\xi_1, \dots, \xi_n) = \scp{\omega^{d,\ell,\xaff}(\zaff)\, \eval_{\xi}(\zaff), p},
$$
which shows the equality between the linear functionals $\check{f}$ and $\big(\omega^{d,\ell,\xaff}(\zaff)\, \eval_{\xi}(\zaff)\big)^{[\le d]}$.
\end{proof}


\section{Generalized Additive Decompositions} \label{sec:GAD}
In this section, we define the Generalized Additive Decompositions (GAD) of a form in $\Sd$, and detail how we measure the size of such decompositions.

\subsection{GADs} 
\begin{definition}
    Let $f \in \Sd$ and $0 \leq k_i \leq d$ for any $i \in \{1,\dots,s\}$. 
    A \textit{Generalized Additive Decomposition (GAD)} of \textit{f} is an expression of the form:
    \begin{equation}
        f = \sum_{i=1}^{s}  \omega_i\, \ell_i^{d-k_i},
        \label{eq:gad}
    \end{equation}
    where $\xi_i \in \kk^{n+1}$ and $\ell_i:= (\xi_i,\xprj)$ are not proportional and do not divide $\omega_i \in \S_{k_i}$.
    The linear forms $\{\ell_i\}_{1 \leq i \leq s}$ are called the \emph{supports} of the GAD \eqref{eq:gad}.
    When $s = 1$, we call the GAD \emph{local}.
\end{definition}
Clearly such a decomposition exists for any $f\in \Sd$, since we can write $f=f\, \ell^0$ for any $\ell \in \Sone$ not dividing $f$.
To measure the size of such a decomposition, we use a dual approach and introduce so-called inverse systems. 



A GAD $f = \sum_{i=1}^{s} \omega_i \ell_i^{d-k_i}\in \S_d$ can be reinterpreted in the dual space as a decomposition in terms of (truncated) polynomial-exponential series.
In fact, by \Cref{lem:w xi dual} we have 
\begin{equation*}
f^* 
= \bigg(\sum_{i=1}^s \omega_i^{d,\ell_i, \bm v}(\zprj)\eval_{\ell_i}(\zprj)\bigg)^{[d]},
\end{equation*}
for any $\bm v$ such that $\set{\ell_i, v_1, \ldots, v_n}$ is a basis of $\Sone$ for $i=1, \ldots, s$. 
Similarly, by \Cref{lem:polexp trunc}, if $\xiprj_{0,i}=1$, then we also have
\begin{equation} \label{eq:fcheckastrunc}
    \check{f} = \bigg( \sum_{i=1}^s {\omega}_i^{d,\ell_i,\xaff}(\zaff) \eval_{\xi_i}(\zaff) \bigg)^{[\le d]} \in \kk[\xaff]^*_{\leq{d}}.
\end{equation}
where $\xi_i=(\xiprj_1, \ldots, \xiprj_n)$ and the $\omega_i^{d, \ell_i, \xaff}$ are defined as in \cref{eq:omega v}.
Hence, $\check{f}$ is the truncation (in degree $\le d$) of the series $\sum_{i=1}^s {\omega}_i^{d, \ell_i, \xaff}(\zaff) \eval_{\xi_i}(\zaff)\in \PolExp$.

\begin{definition} \label{defn:linvsys}
Let $\ell\in \Sone$ and $\bm v= \set{v_1, \ldots, v_n}\subset \Sone$ such that $\set{\ell, v_1, \ldots, v_n}$ is a basis of $\Sone$, and fix $d \ge k$. We define 
the $\ell$-\emph{inverse system} of $\omega \in \Sk$ as
$$
\invsys{\omega}^{d, \ell, \bm v} :=  \big\langle g(\partial_{\bm v})(\omega^{d,\ell, \bm v})  \ |\ g \in \S(\bm v) \big\rangle.
$$
\end{definition}

\begin{remark} 
The $\ell$-inverse system is made of all the possible derivatives of $\omega^{d, \ell, \bm v}$, with respect to any basis $\nub = \set{\nu_1, \ldots, \nu_n}$ of $\vspan{\bm v}$.
In fact, we have
$$
    \invsys{\omega}^{d, \ell, \bm v} = \invsys{\omega^{d, \ell, \bm v}}= \sum_{|\alpha| \leq k} \big\langle  \partial_{\nu_1}^{\alpha_1} \dots \partial_{\nu_n}^{\alpha_n}(\omega^{d,\ell, \bm v}) \big\rangle.
$$
since $\partial_{\nub}^{\alpha}(\omega^{d,\ell, \bm v})=0$ for $|\alpha| >k$, and  $\set{\nub^{\alpha}}_{\alpha \in \NN^n, |\alpha| \le k}$ is a basis of $\oplus_{i=0}^{k}\S_i(\bm v)$. 
\end{remark}

The vector space $\invsys{\omega}^{d, \ell, \bm v}$ depends, in general, on the choices of $d$ and $\bm v$.
However, we now prove that the dimensions do not, hence they are canonical quantities associated with $\omega$ and $\ell$.

\begin{proposition} \label{prop:rankwelldefined}
    Let $\omega \in \S_k$ and $\ell, v_1, \dots, v_n, u_1, \dots, u_n \in \S_1$ such that $\vspan{\ell, \bm v} = \vspan{\ell, \bm u} = \S_1$.
    Then, for all integers $d \geq k$, there is an isomorphism of vector spaces
    \[ \invsys{ \omega }^{d, \ell, \bm v} \simeq \invsys{ \omega }^{k, \ell, \bm u}, \]
    which respects the polynomial degree.
\end{proposition}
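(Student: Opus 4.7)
The plan is to prove the proposition in two steps: first handling the dependence on $d$ with $\bm v$ fixed, then the change of basis from $\bm v$ to $\bm u$ with $d=k$. Composing the two isomorphisms yields the stated filtered isomorphism $\invsys{\omega}^{d,\ell,\bm v} \simeq \invsys{\omega}^{k,\ell,\bm u}$.

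For the change-of-$d$ step, decompose $\omega = \sum_{j=0}^{k} \omega_j \ell^{k-j}$ with $\omega_j \in \S_j(\bm v)$, so that $\omega^{d,\ell,\bm v} = \sum_{j=0}^{k} c_j^{(d)}\, \omega_j$ with $c_j^{(d)} := (d-j)!/d! \neq 0$ for $d \geq k$. I would show that the filtered dimensions of $\invsys{\omega^{d,\ell,\bm v}} \subset \S(\bm v)$, with respect to the degree filtration $F^m = \S_{\leq m}(\bm v)$, depend only on the tuple $(\omega_0,\ldots,\omega_k)$ and not on the coefficients $c_j^{(d)}$. The mechanism: for homogeneous $g_r \in \S_r(\bm v)$, one has $g_r(\partial_{\bm v})\omega^{d,\ell,\bm v} = \sum_{j \geq r} c_j^{(d)}\, g_r(\partial_{\bm v})\omega_j$, and the rank of these derivative maps (restricted to each filtration level) is invariant under nonzero rescaling of graded components. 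Matching filtered dimensions then yields an isomorphism of filtered vector spaces $\invsys{\omega}^{d,\ell,\bm v} \simeq \invsys{\omega}^{k,\ell,\bm v}$.

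For the change-of-basis step, use the decomposition $\S = \S(\bm u) \oplus \ell\S$, arising from $\S_1 = \langle \bm u\rangle \oplus \kk\ell$, to define the projection $\pi_{\bm u}: \S \to \S(\bm u)$ along $\ell\S$. Since $\S(\bm v)$ is likewise a complement of $\ell\S$, the restriction $\psi := \pi_{\bm u}|_{\S(\bm v)}: \S(\bm v) \to \S(\bm u)$ is a bijective graded ring homomorphism preserving polynomial degree which intertwines derivations under the natural identification arising from the linear change of variables; hence $\psi(\invsys{\omega^{k,\ell,\bm v}}) = \invsys{\psi(\omega^{k,\ell,\bm v})}$. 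Since $\omega \equiv \omega_k \equiv \omega'_k \pmod{\ell}$, the top-degree parts of $\psi(\omega^{k,\ell,\bm v})$ and $\omega^{k,\ell,\bm u}$ agree (both equal $\omega'_k/k!$), and an argument parallel to step (a) shows the two inverse systems have matching filtered dimensions, producing the required isomorphism.

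The main obstacle I anticipate is the rank-stability claim in step (a). Although it should be clear that the filtered dimensions of $\invsys{\omega^{d,\ell,\bm v}}$ depend only on the $\omega_j$'s, making this rigorous requires tracking how the degree filtration interacts with differentiation in the presence of the variable scaling factors $c_j^{(d)}$. I expect this to follow from a careful analysis of the catalecticant-type maps $g_r \mapsto g_r(\partial_{\bm v})\omega^{d,\ell,\bm v}$ restricted to appropriate graded summands, ultimately reducing to the elementary fact that nonzero rescaling of homogeneous components preserves the relevant ranks.
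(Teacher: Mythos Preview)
Your two-step plan mirrors the paper's, and your step~(a) is essentially the paper's first paragraph: both argue that the graded rescaling $s_j \mapsto \frac{d!(k-j)!}{(d-j)!k!}\,s_j$ gives a degree-preserving isomorphism $\invsys{\omega}^{d,\ell,\bm v} \simeq \invsys{\omega}^{k,\ell,\bm v}$. You are right to flag this as the delicate point.

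Your step~(b), however, has a genuine gap. The projection $\psi = \pi_{\bm u}|_{\S(\bm v)}$ is indeed a degree-preserving ring isomorphism intertwining derivations, so $\psi\big(\invsys{\omega^{k,\ell,\bm v}}\big) = \invsys{\psi(\omega^{k,\ell,\bm v})}$ holds. But your final comparison of $\psi(\omega^{k,\ell,\bm v})$ with $\omega^{k,\ell,\bm u}$ via ``an argument parallel to step~(a)'' does not go through: the lower-degree graded parts of these two polynomials are in general \emph{not} scalar multiples of one another, so the rescaling mechanism of step~(a) is unavailable. For instance, with $n=2$, $\bm u = \{v_1+\ell,\,v_2\}$ and $\omega = v_1^2 + v_2\,\ell \in \S_2$, one finds $\psi(\omega^{2,\ell,\bm v}) = \tfrac{1}{2}u_2 + \tfrac{1}{2}u_1^2$ while $\omega^{2,\ell,\bm u} = 1 - u_1 + \tfrac{1}{2}u_2 + \tfrac{1}{2}u_1^2$; the degree-$1$ parts $\tfrac{1}{2}u_2$ and $-u_1+\tfrac{1}{2}u_2$ are not proportional. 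And mere agreement in top degree does not control filtered inverse-system dimensions (compare $\invsys{v_1^3}$ with $\invsys{v_1^3+v_2^2}$, whose filtered dimensions differ).

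The paper handles step~(b) by a different mechanism. After reducing to $\bm u = \bm v + \bm a\,\ell$, it Taylor-expands each $\omega_i(\bm u) = \omega_i(\bm v + \bm a\,\ell)$ to obtain an explicit degree-preserving operator $T_{\bm a}$, built from powers of the derivation $\Delta_{\bm a} = \sum_i a_i\,\partial_{u_i}$, satisfying $\omega^{k,\ell,\bm v} = T_{\bm a}(\omega^{k,\ell,\bm u})$. Because $T_{\bm a}$ is a polynomial in commuting constant-coefficient derivations, it commutes with every $\partial_{\bm u}^{\alpha}$, and therefore carries $\invsys{\omega^{k,\ell,\bm u}}$ isomorphically onto $\invsys{\omega^{k,\ell,\bm v}}$. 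This commutation with derivations is precisely the structural ingredient your projection argument lacks; your $\psi$ relates the spaces but not the two dual generators in a way that respects the inverse-system construction.
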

\begin{proof}
    It follows directly from the definition of $\omega^{d,\ell,\bm v}$ (\cref{eq:omega v}) that
    $$ \invsys{ \omega }^{d, \ell, \bm v} \to \invsys{ \omega }^{k, \ell, \bm v}, \quad \sum_{j = 0}^k s_j \mapsto \sum_{j = 0}^k \frac{d! (k-j)!}{(d-j)! k!} s_j, $$
    where $s_j \in \S_j(\bm v)$ are the graded parts of a given element of $\invsys{ \omega }^{d, \ell, \bm v}$, is an isomorphism of vector spaces that respects the degrees.
    Thus, it is sufficient to prove the graded isomorphism $\invsys{ \omega }^{k, \ell, \bm v} \simeq \invsys{ \omega }^{k, \ell, \bm u}$.
    Since $\vspan{\ell, \bm v} = \vspan{\ell, \bm u} = \S_1$, and $\omega^{k,\ell, \bm u}$ (resp. $\omega^{k,\ell, \bm v}$) depends only on $\ell$ and $\vspan{\bm u}$ (resp. $\vspan{\bm v}$), we can assume that
    $ \bm u = \bm v + \bm a\, \ell$ for some $\bm a=(a_1, \ldots, a_n) \in \kk^n$.

We denote by $\partial_{u_i}$ (resp. $\partial_{v_i}$) the differentiation with respect to the variable $u_i$ (resp. $v_i$) in the monomial basis  of $\S$ in the variables $\set{\ell, \bm u}$ (resp. $\set{\ell, \bm v}$).
Let  $\Delta_{\bm a} := \sum_{i=1}^n  a_i \partial_{u_i}$. 
By the Taylor expansion at $\ell = 0$, we obtain
\begin{align*}
  \omega & = \sum_{i=0}^k \omega_i(\bm u) \ell^{k-i} 
  = \sum_{i=0}^k \omega_i(\bm v + \bm a \ell) \ell^{k-i} \\
  &= \sum_{i=0}^k \sum_{j=0}^{i} \frac 1 {j!} \Delta_{\bm a}^j (\omega_i) (\bm v) \ell^{j+k-i}
  = \sum_{i=0}^k \sum_{j=0}^{k-i} \frac 1 {j!} \Delta_{\bm a}^j (\omega_{i+j}) (\bm v)  \ell^{k-i} = \sum_{i=0}^k \omega'_{i} (\bm v)  \ell^{k-i}
\end{align*}   
where $\omega'_i= \omega_i+\sum_{j=1}^{k-i} \frac 1 {j!} \Delta_{\bm a}^j (\omega_{i+j})\in \S_{i}(\bm v)$.

By definition (see \cref{eq:omega v}), we have
$$
\omega^{k,\ell,\bm v} = \sum_{i=0}^k \frac{(k-i)!} {k!} \omega'_i = T\left( \sum_{i=0}^k \frac{(k-i)!} {k!} \omega_i\right) = T(\omega^{k,\ell,\bm u}),
$$
where $T_{\bm a}$ is the linear operator defined by
$$ T_{\bm a}: \S \to \S, \quad p \mapsto p + \sum_{i=0}^{\infty}\sum_{j=1}^{\infty} \frac{1} {j!} \Delta_{\bm a}^{j}(p_{i+j}),$$ 
with $p_i \in \S_{i}$ denoting the homogeneous component of degree $i$ of $p \in \S$.
This map is upper triangular in a graded basis of $\S = \oplus_{i \in \N} \S_i$, is the identity on every $\S_i$, and preserves the degree.
Thus $T_{\bm a}$ is an isomorphism of $\oplus_{i\le d} \S_i$ for any $d\in \N$.

For any $p\in \S$, the multivariate chain rule of differentiation yields
$$ 
    \partial_{v_i}(p(\ell, \bm u))=
    \partial_{v_i} \big(p(\ell, \bm v + \bm a \ell)\big) = 
\sum_{j=1}^n \partial_{u_j}(p) \partial_{v_i} (u_j) 
    = \partial_{u_i}(p)
    $$
so that $\partial_{v_i} = \partial_{u_i}$. 
Thus, for any $\alpha \in \N^n$,
$$
\partial_{\bm v}^{\alpha}(\omega^{k,\ell,\bm v})
 = 
\partial_{\bm u}^{\alpha}(\omega^{k,\ell,\bm v}) =
\partial_{\bm u}^{\alpha} \big(T_{\bm a}( \omega^{k,\ell,\bm u})\big) = 
T_{\bm a}( \partial_{\bm u}^{\alpha} \big(\omega^{k,\ell,\bm u})\big) 
$$
since $\partial_{\bm u}^{\alpha} \Delta_{\bm a}^i= \Delta^i_{\bm a}\partial_{\bm u}^{\alpha}$.
This implies that $\invsys{\omega}^{k,\ell,\bm v} = T_{\bm a}( \invsys{\omega}^{k,\ell, \bm u}) \simeq \invsys{ \omega }^{k, \ell, \bm u}$, and this isomorphism preserves the degree.
\end{proof}
This allows us to define the $\ell$-rank and GAD-rank of a form:
\begin{definition} \label{defn:invsystrankGAD}
Let $\{\ell, v_1, \dots, v_n\}$ be a basis of $\S_1$.
We define the \textit{$\ell$-rank} of $\omega \in \Sk$ as 
$$
\rank_{\ell}(\omega) := \dim \invsys{\omega}^{k, \ell, \bm v}.
$$
The \textit{rank} (or \emph{size}) of a GAD of $f \in \Sd$ as in \cref{eq:gad} is additively defined as
\begin{equation*}
    \rank \left( \sum_{i=1}^s \omega_i\, \ell_i^{d-k_i} \right) := \sum_{i=1}^s \rank_{\ell_i}( \omega_i ).
\end{equation*}
We say that a GAD of $f \in \Sd$ is \emph{minimal} if there are no GADs of $f$ with a strictly lower rank.
The rank of a minimal GAD of $f$ will be referred to as the \emph{GAD-rank} of $f$, which we denote by $\gadrank(f)$.
\end{definition}

\begin{remark}
    \Cref{defn:invsystrankGAD} is well-given, as the $\ell$-rank does not depend on the choice of $\bf v$ by \Cref{prop:rankwelldefined}, and it can be evaluated with only linear algebra operations.
    \end{remark}

The inverse system $\invsys{\omega}^{d,\ell,\bm v}$ is co-variant with changes of coordinates, as we show now.
Given $\phi = (\phi_{i,j})_{0 \leq i,j \leq n} \in \textup{GL}_{n+1}(\kk)$, we denote the corresponding change of coordinate action on $g \in \S$ as 
$\phi(g)= g(\xprj \cdot \phi^T)= g\left(\sum_{i=0}^n \phi_{i,0} x_i, \ldots, \sum_{i=0}^n \phi_{i,n} x_i\right)$,
while its transpose action is the same action, with respect to its transposed matrix, i.e., $\phi^{T}(g)= g(\xprj \cdot \phi )= g(\sum_{i=0}^n \phi_{0,i} x_i, \ldots, \sum_{i=0}^n \phi_{n, i} x_i)$.

\begin{lemma} \label{lem:basechagederivation}
    Let $f,g \in \S$ and $\phi \in \textnormal{GL}_{n+1}(\kk)$. Then
    $$
    \phi\big(g(\partial_{\xprj})(f)\big)= \big( \phi^{-T}(g)(\partial_{\xprj}) \big) \big(\phi(f)\big).
    $$
\end{lemma}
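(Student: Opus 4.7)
The plan is to reduce the statement to an operator identity on $\S$ and then apply it to $f$. Specifically, I claim that for every $g \in \S$,
$$g(\partial_{\xprj}) \circ \phi \;=\; \phi \circ \bigl(\phi^{T}(g)\bigr)(\partial_{\xprj})$$
as maps $\S \to \S$. Substituting $g \mapsto \phi^{-T}(g)$ and using that $\phi \mapsto \phi(\cdot)$ is an anti-action of $\textnormal{GL}_{n+1}(\kk)$ on $\S$ (so that $\phi^{T}\circ\phi^{-T}$ acts as the identity on $\S$) then yields the lemma after evaluating at $f$.

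To prove the operator identity, I would first treat the linear case via the multivariate chain rule. For $g = x_a$, we have $g(\partial_{\xprj}) = \partial_{x_a}$; writing $\phi(f)(\xprj) = f(\yb)$ with $y_j := \sum_i \phi_{i,j}\, x_i$, the chain rule gives
$$\partial_{x_a}\bigl(\phi(f)\bigr)(\xprj) \;=\; \sum_{j}\phi_{a,j}\,(\partial_{x_j} f)(\yb) \;=\; \phi\!\left(\sum_{j}\phi_{a,j}\,\partial_{x_j}(f)\right)\!(\xprj).$$
By the convention recalled before the lemma, $\phi^{T}(x_a) = \sum_j \phi_{a,j}\, x_j$, so the right-hand side equals $\phi\bigl((\phi^{T}(x_a))(\partial_{\xprj})(f)\bigr)$, establishing the identity for linear $g$.

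For arbitrary $g \in \S$, I would extend by multiplicativity and linearity. Since the partial derivatives $\partial_{x_i}$ pairwise commute, $(g_1 g_2)(\partial_{\xprj}) = g_1(\partial_{\xprj}) \circ g_2(\partial_{\xprj})$; moreover $\phi^{T}$, being a substitution of variables, is a $\kk$-algebra homomorphism of $\S$. Composing the identity in the two factors, if it holds for $g_1$ and $g_2$ then
$$(g_1 g_2)(\partial_{\xprj}) \circ \phi \;=\; g_1(\partial_{\xprj}) \circ \phi \circ (\phi^{T} g_2)(\partial_{\xprj}) \;=\; \phi \circ (\phi^{T} g_1)(\partial_{\xprj}) \circ (\phi^{T} g_2)(\partial_{\xprj}) \;=\; \phi \circ \bigl(\phi^{T}(g_1 g_2)\bigr)(\partial_{\xprj}).$$
A routine induction on the degree of monomials, together with $\kk$-linearity, then gives the identity for all $g \in \S$.

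The only subtle point is keeping track of which transpose appears where: the chain rule naturally contracts a derivative index against a \emph{row} of $\phi$, which matches the paper's convention for $\phi^{T}$ (and not $\phi$) acting on $\S$. All other ingredients---chain rule, commutativity of partials, and multiplicativity of substitution---are routine, so I do not anticipate any genuine obstacle beyond careful bookkeeping.
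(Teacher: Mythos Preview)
Your proof is correct and follows essentially the same strategy as the paper's: reduce to linear $g$ via multiplicativity and $\kk$-linearity, handle the linear case with the chain rule, and then conclude. Your organization is slightly cleaner---you prove the operator identity $g(\partial_{\xprj})\circ\phi=\phi\circ(\phi^{T}g)(\partial_{\xprj})$ for arbitrary $f$ and then substitute $g\mapsto\phi^{-T}(g)$, whereas the paper also reduces $f$ to monomials and passes through the apolar pairing $\apol{\phi^{-T}(x_i),\phi(x_j)}=\delta_{ij}$---but these are cosmetic differences, not a genuinely different route.
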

\begin{proof}
    By linearity of the involved objects, it is sufficient to prove it on monomials, hence we assume $f = \xprj^{\alpha}$ and $g = \xprj^{\beta}$.
    For every $0 \leq i,j \leq n$, we have
    \begin{align*}
    \big( \phi^{-T}(x_i)(\partial_{\xprj}) \big) \big(\phi(x_j^{\alpha_j})\big) &= 
    \big( \phi^{-T}(x_i)(\partial_{\xprj}) \big) \big(\phi(x_j)^{\alpha_j}\big) \\
    &= \alpha_j \phi(x_j)^{\alpha_j-1} \apol{\phi^{-T}(x_i), \phi(x_j)}.
    \end{align*}
    We verify that $\phi^{-T}(x_i)$ is represented by the $i$-th row of $\phi^{-1}$, while $\phi(x_j)$ is represented by the $j$-th column of $\phi$ in the basis $(x_0, \ldots, x_n)$ of $\Sone$.
    Thus, we have $\apol{\phi^{-T}(x_i), \phi(x_j)} = \delta_{ij}$, so
    $$
    \big( \phi^{-T}(x_i)(\partial_{\xprj}) \big) \big(\phi(x_j^{\alpha_j})\big) = 
    \delta_{ij} \alpha_j \phi(x_j)^{\alpha_j-1} = \phi\big( \partial_{x_i}(x_j^{\alpha_j}) \big),
    $$
    which proves the statement for $f = x_j^{\alpha_j}$ and $g = x_i$.
    Since $\phi, \phi^{-1}$ and $\phi^{T}$ are all multiplicative, by the chain rule of differentiation, we get
    $$
    \big( \phi^{-T}(x_i)(\partial_{\xprj}) \big) \big(\phi(\xprj^{\alpha})\big) =
    \phi\big(\alpha_i x_i^{\alpha_i-1} \prod_{j \neq i} x_j^{\alpha_j}\big) =
    \phi\big( \partial_{x_i}(\xprj^{\alpha}) \big).
    $$
    The conclusion follows by repeatedly applying the above argument for all the (possibly repeated) linear factors $x_i$ of $\xprj^{\beta}$.
\end{proof}

We note that \Cref{lem:basechagederivation} corrects and generalizes \cite[Lemma 3.6]{Bernardi2024}.

\begin{corollary} \label{cor:rankefficientcomp}
    Let $\omega \in \S_k$, $d \geq k$, $\ell \in \Sone$ and $\phi \in \textnormal{GL}_{n+1}(\kk)$.
    Then we have 
    $$
    \phi\big( \invsys{ \omega }^{d,\ell, \bm v}\big) =
    \invsys{ \phi(\omega) }^{d,\phi(\ell), \phi(\bm v)} 
    $$
\end{corollary}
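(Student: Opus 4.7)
The plan is to prove the corollary in three steps, combining the equivariance of the construction $\omega \mapsto \omega^{d,\ell,\bm v}$ with a covariance property of partial derivatives under $\phi$.

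First, I would verify that the construction from \eqref{eq:omega v} is $\phi$-equivariant, namely that $\phi(\omega^{d,\ell,\bm v}) = \phi(\omega)^{d,\phi(\ell),\phi(\bm v)}$. Writing the unique decomposition $\omega = \sum_{j=0}^k \omega_j\, \ell^{k-j}$ with $\omega_j \in \S_j(\bm v)$ and applying $\phi$ gives $\phi(\omega) = \sum_{j=0}^k \phi(\omega_j)\, \phi(\ell)^{k-j}$; since $\phi$ is a linear isomorphism, $\phi(\omega_j) \in \S_j(\phi(\bm v))$, and this is the unique analogous decomposition of $\phi(\omega)$ relative to the basis $\{\phi(\ell),\phi(\bm v)\}$. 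The defining formula \eqref{eq:omega v}, together with $\KK$-linearity of $\phi$, then yields the claim directly.

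Second, I would establish the derivation covariance identity
\[ \phi\bigl(g(\partial_{\bm v})(f)\bigr) = \phi(g)(\partial_{\phi(\bm v)})\bigl(\phi(f)\bigr) \qquad \forall\, g\in \S(\bm v),\ f\in \S. \]
Both maps $\phi \circ \partial_{v_i}$ and $\partial_{\phi(v_i)} \circ \phi$ send $\S \to \S$, vanish on constants, and agree on the basis $\{\ell, v_1, \ldots, v_n\}$ of $\Sone$, since $\partial_{v_i}(v_j) = \delta_{ij} = \partial_{\phi(v_i)}(\phi(v_j))$ and $\partial_{v_i}(\ell) = 0 = \partial_{\phi(v_i)}(\phi(\ell))$. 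A straightforward induction on degree, using the Leibniz rule together with the fact that $\phi$ is an algebra homomorphism, then gives $\phi \circ \partial_{v_i} = \partial_{\phi(v_i)} \circ \phi$ on all of $\S$. Composing and extending by $\KK$-linearity over $\S(\bm v)$ yields the claimed identity for arbitrary $g \in \S(\bm v)$.

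Combining the two steps, and using that $\phi$ restricts to a bijection $\S(\bm v) \to \S(\phi(\bm v))$, one computes
\begin{align*}
\invsys{\phi(\omega)}^{d,\phi(\ell),\phi(\bm v)}
&= \bigl\langle h(\partial_{\phi(\bm v)})\bigl(\phi(\omega)^{d,\phi(\ell),\phi(\bm v)}\bigr) \mid h \in \S(\phi(\bm v)) \bigr\rangle \\
&= \bigl\langle \phi(g)(\partial_{\phi(\bm v)})\bigl(\phi(\omega^{d,\ell,\bm v})\bigr) \mid g \in \S(\bm v) \bigr\rangle \\
&= \phi\bigl(\bigl\langle g(\partial_{\bm v})(\omega^{d,\ell,\bm v}) \mid g \in \S(\bm v) \bigr\rangle\bigr) \\
&= \phi\bigl(\invsys{\omega}^{d,\ell,\bm v}\bigr),
\end{align*}
where the first equality is the definition, the second uses Step 1 and rewrites $h = \phi(g)$, and the third applies Step 2. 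The only genuine obstacle is Step 2: one could attempt to deduce it from \Cref{lem:basechagederivation}, but that lemma uses $\partial_{\xprj}$ and inserts a $\phi^{-T}$ on $g$, so reconciling it with the $\partial_{\bm v}$-formulation would force a translation between the bases $\xprj$ and $(\ell,\bm v)$, making the direct derivation argument above considerably cleaner.
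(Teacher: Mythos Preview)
Your proof is correct and takes a genuinely different route from the paper. Both arguments share Step~1 (the equivariance $\phi(\omega^{d,\ell,\bm v}) = \phi(\omega)^{d,\phi(\ell),\phi(\bm v)}$), but for Step~2 the paper invokes \Cref{lem:basechagederivation}, obtaining the $\phi^{-T}$-twisted identity $\phi\big(g(\partial_{\bm v})(\omega^{d,\ell,\bm v})\big) = \phi^{-T}(g)(\partial_{\bm v})\big(\phi(\omega^{d,\ell,\bm v})\big)$ and then arguing that $\phi^{-T}(\kk[\bm v]) = \kk[\phi(\bm v)]$ so that the resulting spans coincide. You instead prove the untwisted covariance $\phi \circ \partial_{v_i} = \partial_{\phi(v_i)} \circ \phi$ directly, by checking it on the basis $\{\ell,\bm v\}$ of $\Sone$ and extending via Leibniz plus the fact that $\phi$ is an algebra automorphism. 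Your approach is more self-contained: it sidesteps both the translation between the $\xprj$- and $(\ell,\bm v)$-coordinate derivations and the need to analyze how $\phi^{-T}$ acts on $\S(\bm v)$, at the modest cost of reproving a special case of the derivation lemma from scratch rather than citing it. The paper's route, conversely, reuses an existing building block but leaves more implicit the passage from $\partial_{\xprj}$ to $\partial_{\bm v}$ that you correctly flag as requiring care.
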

\begin{proof}
For every $0 \leq i \leq k$ and every $g \in \S(\bm v)$, by \Cref{lem:basechagederivation}, we have
$$ 
       \phi\big(g(\partial_{\bm v})(\omega^{d,\ell,\bm v})\big) = \phi^{-T}(g)(\partial_{\bm v}) \big( \phi(\omega^{d,\ell, \bm v}) \big).
$$ 
By construction, we also have $\phi(\omega^{d,\ell, \bm v}) = \phi(\omega)^{d,\phi(\ell),\phi(\bm v)}$, and we have $\phi^{-T}(\K[\bm v]) = \K[\phi(\bm v)]$.
Hence, the above equality shows that the space of all derivatives in $\phi(\bm v)$ applied to $\phi(\omega^{d,\ell, \bm v})$ is precisely the image under $\phi$ of the space of all derivatives in $\bm v$ applied to $\omega^{d,\ell, \bm v}$.
\end{proof}


\Cref{cor:rankefficientcomp} implies that the computation of the $\ell$-inverse system of $\omega$ can be performed in arbitrary coordinates.  One can compute the linear map $\phi : \{\ell, \bm v\} \mapsto \{x_0, \xaff\}$, and the inverse system as
    $$ \invsys{\omega}^{d, \ell, \bm v} = \phi^{-1} \big( \invsys{\phi(\omega)}^{d, x_0, \bm \xaff} \big). $$
    This approach is usually more efficient than a straight application of \Cref{defn:linvsys}, as it only involves variable differentiations instead of directional derivatives.
    If, instead, only the $\ell$-rank is needed, one can rename variables to have $\ell_0 \neq 0$ and directly apply \Cref{prop:rankwelldefined} to compute the $\ell$-rank as
$$ 
\rank_{\ell}(\omega) = \dim \invsys{\omega}^{\deg(\omega), \ell, \xaff}. 
$$

\subsection{Geometric interpretation}

We provide here a brief geometric interpretation of GADs in terms of osculating varieties to the \emph{$d$-th Veronese variety}, which is classically defined as the $d$-th Veronese embedding of $\mathbb{P}^n(\K)$.
Its affine cone is
    \begin{equation*}
        \mathcal{V}_{d} := \{\omega \,\ell ^d \hspace{0.1cm}|\hspace{0.1cm} \omega \in \mathbb{K}, \ell \in \cl S_1\}.
    \end{equation*}
\begin{remark}
    A famous challenging problem consists of computing a (minimal) \textit{Waring decomposition} of $f \in \S_d$, that is, a decomposition of the form: 
    \begin{equation*}
        f = \sum_{i=1}^r \omega_i \, \ell_i^d,
    \end{equation*}
    with a minimal $r \in \NN$, which is called the \textit{Waring rank} of $f$.
    Geometrically, it corresponds to expressing $f$ as a minimal linear combination of $r$ points on $\mathcal{V}_{d}$. 
\end{remark}
One can consider broader families of decompositions, by employing also elements in the tangent variety to the Veronese \cite{Bernardi20201}, whose affine cone is

\begin{equation*}
    \mathcal{T}_{d} := \{\omega\, \ell^{d-1} \hspace{0.1cm}| \hspace{0.1cm}\omega, \ell\in \S_1\}.
\end{equation*}

\begin{remark}\label{rem:tangential}
 A point $\omega\, \ell^{d-1}$ with $\ell= \ell_0 x_0 + \cdots+ \ell_n x_n, \omega= \omega_0 x_0 + \cdots + \omega_n x_n \in \Sone$ is lying in the tangent spaces to $\mathcal{V}_{d}$, since $\sum_{i=0}^{n}\omega_i \partial_{\ell_i}(\ell^d)= d\, (\sum_{i=0}^{n} \omega_i x_i) \ell^{d-1} = d \omega \ell^{d-1}$.
\end{remark}

More generally, one can consider decompositions involving higher-order osculating varieties to the Veronese.
\begin{definition}
    The affine cone of the \textit{k-th osculating variety} to $\mathcal{V}_{d}$ 
    is defined as
    \begin{equation*}
        \mathcal{O}^k_{d} := \{\omega\, \ell^{d-k} \hspace{0.1cm} | \hspace{0.1cm} \omega \in \Sk, \ell \in \cl S_1 \}.
    \end{equation*}
\end{definition}

\begin{remark}
Let $\set{\ell, v_1, \ldots, v_n}$ be a basis of $\Sone$. The 
Taylor expansion of $f\in \S_d$ at $\ell$ is of the form:
\begin{equation}
    f = \sum_{j=0}^d f_j(\bm v) \ell^{d-j} ,
    \label{taylor}
\end{equation}
where $f_j \in \mathcal{S}_j(\bm v)$ are the \textit{decomposition coefficients}. 
For $k \le d$, \cref{taylor} reads
\begin{equation*}
    f \in \omega \,\ell^{d-k} + (v_1,..., v_n)^{k+1}
    \label{omega}
\end{equation*}
where $\omega \in \Sk$ and $(v_1,\dots,v_n)^{k+1}$ is the ideal generated by all degree-$(k+1)$ monomials in $v_1, \ldots, v_n$.
The term $\omega\, \ell^{d-k}$ is therefore the truncation of Taylor's expansion of $f$ at $\ell$ at order $k$, which explains the name of $k^{\textup{th}}$-\emph{osculating} variety. 
We notice that, as in \Cref{rem:tangential}, the tangential variety to $\cl O^{k-1}_d$ is $\cl O^{k}_{d}$. 
For more details, see e.g. \cite{Bernardi2007} or \cite{Segre1946}.
\end{remark}

\begin{lemma} \label{lem:osc dim sing} Let $0 \le k<d$. The singular locus of $\Okdn$ is $\textup{sing}(\Okdn)= \cl O^{k-1}_{d}$ (with the convention that $\cl O^{-1}_d = \emptyset$). The affine dimension of $\Okdn$ is ${k+n \choose k} + n$. 
\end{lemma}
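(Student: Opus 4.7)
The approach is to use the surjective parametrization
\[
\pi \colon \Sk \times \Sone \longrightarrow \Okdn, \quad (\omega, \ell) \longmapsto \omega\,\ell^{d-k},
\]
whose differential is $d\pi_{(\omega,\ell)}(\dot\omega, \dot\ell) = \dot\omega\,\ell^{d-k} + (d-k)\,\omega\,\ell^{d-k-1}\dot\ell$, with image $\ell^{d-k-1}(\ell\,\Sk + \omega\,\Sone) \subseteq \Sd$.

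For the dimension and the smooth part: whenever $\gcd(\omega, \ell) = 1$, unique factorisation in $\S$ yields $\ell\,\Sk \cap \omega\,\Sone = \KK\cdot\omega\ell$, so $\dim(\ell\,\Sk + \omega\,\Sone) = {n+k \choose k} + n$, and multiplication by $\ell^{d-k-1}$ is injective on $\S_{k+1}$. Thus $d\pi_{(\omega,\ell)}$ has rank ${n+k \choose k} + n$ at every pair with $\ell \nmid \omega$. The constant-rank theorem then gives $\dim \Okdn = {n+k \choose k} + n$ and shows that every image $f = \omega\,\ell^{d-k}$ with $\ell \nmid \omega$ is a smooth point of $\Okdn$, which already establishes $\textup{sing}(\Okdn) \subseteq \cl O^{k-1}_d$.

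The reverse inclusion is the delicate step. Given $f = \omega'\,\ell_0^{d-k+1} \in \cl O^{k-1}_d$, the fiber $\pi^{-1}(f)$ reduces (for $d \geq 2k$) to the one-parameter family $(\lambda^{-(d-k)}\omega'\ell_0,\,\lambda\ell_0)_{\lambda\in\KK^*}$, and at each such point $d\pi$ has image $\ell_0^{d-k}\,\Sk$ of dimension only ${n+k \choose k}$, strictly less than $\dim \Okdn$. To show $\dim T_f\Okdn > {n+k \choose k} + n$, I would exhibit additional tangent directions via Puiseux curves $(\omega(t), \ell(t))$ whose leading fractional-power contributions cancel, exposing secondary tangent vectors outside $\ell_0^{d-k}\Sk$; summing these with the parametric image gives a Zariski tangent space strictly larger than $\dim\Okdn$. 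Alternatively, by $\GL_{n+1}$-invariance of $\dim T_f\Okdn$, it suffices to verify strict inequality at the deepest representative $f = \ell_0^d \in \V_d$ via the catalecticant/apolar equations of $\Okdn$, and then propagate the conclusion to all of $\cl O^{k-1}_d$ via the closedness and $\GL_{n+1}$-invariance of the singular locus.

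The hard part is precisely this last estimate on $\dim T_f\Okdn$ at points of $\cl O^{k-1}_d$: the parametric image captures only ${n+k \choose k}$ dimensions, so achieving the strict inequality demands either the Puiseux analysis of cancelled parametric expansions or an explicit catalecticant-type description of the defining ideal of $\Okdn$, both technical but standard.
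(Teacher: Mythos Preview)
Your constant-rank step has a genuine gap. Knowing that $d\pi_{(\omega,\ell)}$ has rank $\binom{n+k}{k}+n$ only tells you that the image of a \emph{neighbourhood of $(\omega,\ell)$} is a smooth $\big(\binom{n+k}{k}+n\big)$-dimensional germ; it does not make $\Okdn$ smooth at $f=\omega\,\ell^{d-k}$, because other preimages of $f$ can push extra directions into the Zariski tangent space. This actually happens whenever $d\le 2k$. Take $n=1$, $d=4$, $k=2$ and $f=x_0^2x_1^2$. Then $f=(x_0^2)\cdot x_1^{2}=(x_1^2)\cdot x_0^{2}$, and in both representations $\ell\nmid\omega$, so your argument would declare $f$ smooth. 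But the two parametric tangent spaces are $x_1\cdot\S_3$ and $x_0\cdot\S_3$; each is $4$-dimensional, their sum is all of $\S_4$, hence $\dim T_f\cl O_4^2\ge 5>4=\dim\cl O_4^2$ and $f$ is singular. Since $x_0^2x_1^2$ has no triple linear factor, $f\notin\cl O_4^1$, so the inclusion $\textup{sing}(\Okdn)\subseteq\cl O_d^{k-1}$ that you (and the paper) claim is in fact false without an extra hypothesis such as $d>2k$.

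The paper's proof has exactly the same blind spot, and an additional one that you correctly avoided: it identifies the image of $d\pi$ with ``$T_f(\Okdn)$'' and then reads off singularity from this image \emph{dropping} in dimension over $\cl O_d^{k-1}$, whereas singularity means the Zariski tangent space is \emph{larger} than the variety, not smaller. You were right to isolate the inclusion $\cl O_d^{k-1}\subseteq\textup{sing}(\Okdn)$ as the delicate direction and to propose limit-of-secants or equation-based arguments for it; ironically, it is the ``easy'' forward inclusion that breaks. The dimension count $\dim\Okdn=\binom{n+k}{k}+n$ survives (generic fibre of $\pi$ is a single $\mathbb G_m$-orbit), and for $d>2k$ your smoothness argument can be completed because unique factorisation then forces the fibre over any $f\notin\cl O_d^{k-1}$ to be exactly that $\mathbb G_m$-orbit.
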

\begin{proof}
Let $f= \omega\, \ell^{d-k}  \in \Okdn$ with $\ell \in \cl S_{1}$ and $\omega \in \cl S_{k}$. The tangent space at $f$, spanned by the limits $\lim_{\epsilon\to 0} \frac{1} \epsilon( (\omega+ \epsilon \theta) - \omega) \ell^{d-k}$ and $\lim_{\epsilon\to 0} \omega \frac 1 \epsilon (({\ell +\epsilon \lambda})^{d-k} - \ell^{d-k})$ for all $\theta \in \Sk, \lambda\in \Sone$, is 
$$
T_f(\Okdn) = \vspan{  \theta\, \ell^{d-k} + \omega \,\lambda\, \ell^{d-k-1} \ | \ \theta\in \cl S_{k}, \lambda \in \cl S_{1}}.
$$
Its dimension is ${k+n \choose k} + n+1 - \dim(W)$, where 
$$
W = \vspan{ \theta \ell^{d-k}  : \theta \in \cl S_{k}} \cap \vspan{ \omega \lambda \ell^{d-k-1} : \lambda \in \cl S_{1}}.
$$
Elements in $W$ are such that $\theta  \ell^{d-k} = \omega \lambda \ell^{d-k-1} $, i.e., $ \theta \ell = \omega \lambda$.
\begin{itemize}
    \item If $\ell$ does not divide $\omega$, then $\lambda = \eta \ell$ for some $\eta \in \K$ and $\dim(W)=1$ so that $\dim\big(T_f(\Okdn)\big)= {k+n \choose k} + n$.
    \item If $\ell$ divides $\omega$, namely there is $\omega'\in \cl S_{k-1}$ such that $\omega=\omega' \ell$, then $\theta = \omega' \lambda$ for any choice of $\lambda\in \Sone$, $\dim(W)=n+1$ and $\dim\big(T_f(\Okdn)\big)= {k+n \choose k}$.
\end{itemize}
This shows that $\dim(\Okdn)= {k+n \choose k} + n$, while the dimension of the tangent space drops at a point of $\textup{sing}(\Okdn)= \{\omega' \ell \ell^{d-k-1}: \omega' \in \cl S_{k-1}, \ell \in \cl S_{1} \}= \cl O_d^{k-1}$.
\end{proof}

\Cref{lem:osc dim sing} shows that, if $\ell$ does not divide $\omega$, then $\omega\, \ell^{d-k} \in (\Okdn)^{smooth}$ is a smooth point of $\Okdn$.
Hence, GADs parametrize generic points of joint varieties of osculating varieties to $\mathcal{V}_{d}$. In particular,
\begin{equation*}
           f = \sum_{i=1}^{s} \omega_i\, \ell_i^{d-k_i} \textup{ GAD as in \cref{eq:gad} } \iff f \in \sum_{i=1}^{s} (\mathcal{O}^{k_i}_{d})^{smooth}.
\end{equation*}

\begin{remark}
Waring and tangential decomposition are special instances of GADs (for $k_i = 0$ and $k_i \leq 1$, respectively), thus the GAD-rank is always bounded from above by the Waring (resp. tangential) rank.
However, minimal GADs typically differ completely from minimal Waring (or tangential) decompositions.
\end{remark}

\section{Algebras and operators associated with GADs} \label{sec:algop-GAD}
In this section, we recall certain properties of Artinian algebras, describe those associated with a GAD, and prove novel results that we be employed in the next sections.

\subsection{Properties of an Artinian algebra and its dual} 

Let $I \subset R=\K[\xaff]$ be a non-unit ideal, and $\cl A := R/I$ be the associated quotient algebra.
In this setting, the following are known to be equivalent:
\begin{itemize}
    \item $I$ is \emph{zero-dimensional}, i.e. the Krull dimension of $\cl A$ is $0$.
    \item $\cl A$ is \emph{Artinian}, i.e. $\cl A$ is a finite-dimensional $\KK$-vector space.
    \item $I$ defines a \emph{scheme of points}, i.e. its vanishing set $\V(I)$ is finite.
\end{itemize}

Artinian algebras admit a unique direct decomposition into local Artinian algebras, as follows.

\begin{theorem}[{\cite[Theorem 4.9]{EM07}}]\label{thm:artinian}
    Let $I \subset \K[\xaff]$ be a zero-dimensional ideal, whose minimal primary decomposition is $I = I_{\xi_1} \cap \dots \cap I_{\xi_s}$, where $I_{\xi_i}$ is $\mfm_{\xi_i}$-primary and $\xi_i \in \ok^n$. Then
    $$
    \cl A = \cl A_1 \oplus \dots \oplus \cl A_s,
    $$
    where $\cl A_i \simeq \KK[\xaff] / I_{\xi_i}$ and $\cl A_i \cdot \cl A_j = 0$ for every $i \neq j$.
\end{theorem}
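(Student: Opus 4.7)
The plan is to deduce the decomposition from the Chinese Remainder Theorem (CRT) applied to the given primary decomposition $I=\bigcap_{i=1}^s I_{\xi_i}$. The main step is to show that the primary components are pairwise comaximal: for $i\neq j$, $I_{\xi_i}+I_{\xi_j}=R$. Since the primary decomposition is minimal, the associated primes $\mfm_{\xi_i}=\sqrt{I_{\xi_i}}$ are distinct maximal ideals, hence already comaximal: $\mfm_{\xi_i}+\mfm_{\xi_j}=R$. By primariness, there exist $N_i,N_j\in\NN$ with $\mfm_{\xi_i}^{N_i}\subseteq I_{\xi_i}$ and $\mfm_{\xi_j}^{N_j}\subseteq I_{\xi_j}$, and from $\mfm_{\xi_i}+\mfm_{\xi_j}=R$ one extracts $1=a+b$ with $a\in\mfm_{\xi_i},b\in\mfm_{\xi_j}$, so that expanding $1=(a+b)^{N_i+N_j}$ by the binomial theorem shows $R=\mfm_{\xi_i}^{N_i}+\mfm_{\xi_j}^{N_j}\subseteq I_{\xi_i}+I_{\xi_j}$.

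Once comaximality is established, the CRT yields the ring isomorphism
$$
\Phi:\cl A = R/I \;\xrightarrow{\;\sim\;}\; R/I_{\xi_1}\times\cdots\times R/I_{\xi_s},
$$
induced by the natural projections. Pulling back the canonical idempotents of the product ring produces orthogonal idempotents $e_1,\ldots,e_s\in\cl A$ with $\sum_{i=1}^s e_i=1$ and $e_ie_j=0$ for $i\neq j$. Setting $\cl A_i:=e_i\cl A$, one obtains an internal direct sum $\cl A=\cl A_1\oplus\cdots\oplus\cl A_s$, and the restriction of $\Phi$ to $\cl A_i$ identifies $\cl A_i\simeq R/I_{\xi_i}$ via the $i$-th coordinate projection. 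The orthogonality of the $e_i$ immediately gives $\cl A_i\cdot\cl A_j=e_ie_j\cl A=0$ for $i\neq j$, which is the last claim.

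The main obstacle is really just the comaximality reduction, which is classical and elementary; the rest is a direct application of CRT. A subtle point worth verifying is that even when $\xi_i\in\ok^n\setminus\KK^n$, the ideal $\mfm_{\xi_i}\subset R=\KK[\xaff]$ of $\KK$-polynomials vanishing at $\xi_i$ is still a maximal ideal of $R$ (with residue field $\KK(\xi_i)$), so the above argument goes through without modification over a non-algebraically closed base field. No genuinely new ingredient beyond standard commutative algebra is required, which is why the result is simply cited from \cite{EM07}.
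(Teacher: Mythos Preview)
Your argument is correct and is the standard route via the Chinese Remainder Theorem: comaximality of the primary components follows from comaximality of their distinct maximal radicals together with the inclusion $\mfm_{\xi_i}^{N_i}\subseteq I_{\xi_i}$, and then CRT gives the product decomposition and the orthogonal idempotents. Your remark on the non-algebraically closed case is also appropriate.

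Note, however, that the paper does \emph{not} supply a proof of this statement: it is quoted verbatim as \cite[Theorem~4.9]{EM07} and used as background. So there is no ``paper's own proof'' to compare against; your write-up simply fills in the standard justification that the authors chose to cite rather than reproduce.
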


For every prime $\mfm$ of $\cl A$, the kernel of the localization map $\cl A \to \cl A_{\mfm}$ is the intersection of all $\mfm$-primary ideals of $\cl A$ \cite[Corollary 10.21]{Atiyah2018}.
Hence, every $\cl A_i$ is isomorphic to the localization of $\cl A$ at $\mfm_{\xi_i} \cl A$.
The $\KK$-dimension of the local algebra $\cl A_i$ is referred to as the \emph{multiplicity} of the point $\xi_i$.

We define the \emph{idempotent} $\ub_i$ associated with $\xi_i$ as the projection of $1 \in \cl A$ onto $\cl A_i$.
These elements are then characterized by the following properties, for every $1 \leq i \neq j \leq s$:
\begin{equation}\label{eq:idem}
1 = \ub_1 + \dots + \ub_s, \qquad \ub_i^2 = \ub_i, \qquad \ub_i \ub_j = 0.
\end{equation}

The orthogonal space of an $\mfm_{\xi}$-primary ideal $I_{\xi}$ has a convenient representation in terms of the vector space 
$$
D_{I_{\xi}}(\zaff) := \vspan{ \omega(\zaff) \in \KK[\zaff] \mid \forall q \in I_{\xi}, \, \omega(\partial_{\xaff})(q)(\xi) = 0 },
$$
which is called the \emph{inverse system of} $I_{\xi}$.

\begin{proposition}[{\cite[Proposition 2.16]{Mourrain2017}}]\label{prop:3.2}
Let $I_{\xi} \subset \KK[\xaff]$ be an $\mfm_{\xi}$-primary ideal. Then
$$
I_{\xi}^{\perp} \simeq \dual{(\KK[\xaff]/I_{\xi})} = D_{I_{\xi}}(\zaff) \eval_{\xi_i}(\zaff).
$$
\end{proposition}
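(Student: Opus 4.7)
The plan is to establish the two equalities separately. The isomorphism $I_\xi^\perp \simeq \dual{(\K[\xaff]/I_\xi)}$ is the standard fact that a linear functional on $R$ vanishing on $I_\xi$ descends uniquely to a linear functional on the quotient, so I would just remark on it. The substantive content lies in the identification $I_\xi^\perp = D_{I_\xi}(\zaff)\eval_\xi(\zaff)$.

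The inclusion ``$\supseteq$'' is essentially a restatement of \cref{eq:polyexprep}. For any $\omega \in D_{I_\xi}(\zaff)$, the series $\omega(\zaff)\eval_\xi(\zaff)$ represents the functional $p \mapsto \omega(\partial_\xaff)(p)(\xi)$, which vanishes on $I_\xi$ precisely because $\omega \in D_{I_\xi}$.

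For the reverse inclusion ``$\subseteq$'' I would reduce to the case $\xi=0$ via the translation automorphism $\sigma_\xi : R \to R$, $x_i \mapsto x_i + \xi_i$, i.e.\ $\sigma_\xi(p) = p(\xaff+\xi)$. Setting $J := \sigma_\xi(I_\xi)$, one has $\sigma_\xi(\mfm_\xi) = \mfm_0$, so $J$ is $\mfm_0$-primary, and the dual map $\sigma_\xi^{*}: \dual{R} \to \dual{R}$, $\varphi \mapsto \varphi\circ\sigma_\xi$, restricts to a linear isomorphism $J^\perp \xrightarrow{\sim} I_\xi^\perp$. A chain-rule computation shows that $\sigma_\xi^{*}$ acts on a polynomial functional $\omega(\zaff) \in \dual{R}$ by
$$ \sigma_\xi^{*}(\omega(\zaff))(p) = \omega(\partial_\xaff)\big(p(\xaff+\xi)\big)(0) = \omega(\partial_\xaff)(p)(\xi), $$
so $\sigma_\xi^{*}(\omega(\zaff)) = \omega(\zaff)\eval_\xi(\zaff)$ in $\dual{R}$. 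The same chain rule shows $D_J = D_{I_\xi}$, since $\omega(\partial_\xaff)(\sigma_\xi(q))(0) = \omega(\partial_\xaff)(q)(\xi)$ for all $q \in I_\xi$.

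It remains to handle the base case $\xi = 0$, namely to show $J^\perp \subseteq \K[\zaff]$ when $J$ is $\mfm_0$-primary. Since $J$ is $\mfm_0$-primary, there exists $N \in \N$ with $\mfm_0^N \subseteq J$; hence any $\varphi \in J^\perp$ satisfies $\varphi_\alpha = \scp{\varphi,\xaff^\alpha} = 0$ for all $|\alpha| \geq N$, so the series $\varphi(\zaff) = \sum_\alpha \varphi_\alpha \zaff^\alpha/\alpha!$ is a polynomial in $\zaff$. Then $\varphi \in D_J(\zaff)$ is automatic from the definition of $D_J$ applied to $p = \xaff^\alpha \star q$ for $q \in J$. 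Combining everything, $I_\xi^\perp = \sigma_\xi^{*}(J^\perp) = \sigma_\xi^{*}(D_J(\zaff)) = D_{I_\xi}(\zaff)\eval_\xi(\zaff)$.

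The main obstacle, or rather the only delicate point, is bookkeeping the translation: one must carefully verify that the dual automorphism $\sigma_\xi^{*}$ converts a polynomial series into a polynomial-exponential one with the correct base point, and that the inverse-system condition is preserved under the translation. Everything else reduces either to the definition of $D_{I_\xi}$ together with \cref{eq:polyexprep}, or to the elementary fact that a functional vanishing on a high power of the maximal ideal is represented by a polynomial of bounded degree.
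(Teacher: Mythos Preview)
The paper does not prove this proposition; it is quoted verbatim from \cite[Proposition 2.16]{Mourrain2017} and used as a black box. Your argument is therefore not to be compared against anything in the paper, but it is a correct and standard proof of the cited result.

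One small remark: in the base case $\xi=0$, your phrase ``automatic from the definition of $D_J$ applied to $p = \xaff^\alpha \star q$'' is obscure (the $\star$-product goes $R \times R^* \to R^*$, so $\xaff^\alpha \star q$ is not well-typed for $q \in J \subset R$). What you actually need there is simpler: once $\varphi \in J^\perp$ is known to be a polynomial $\varphi(\zaff)$, the identity $\scp{\varphi(\zaff), q} = \varphi(\partial_\xaff)(q)(0)$ (which is exactly how the paper represents polynomial functionals, since $\eval_0 = 1$) shows that $\varphi \in J^\perp$ and $\varphi \in D_J$ are the same condition. No auxiliary elements are needed.
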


The above local property implies a global decomposition property on the elements of $\dual{\cl A}$, as given by the following result.

\begin{theorem}[{\cite[Theorem 2.17]{Mourrain2017}}]\label{thm:struct dual}
    Let $I \subset \K[\xaff]$ be a zero-dimensional ideal, whose minimal primary decomposition is $I = I_{\xi_1} \cap \dots \cap I_{\xi_s}$, where $I_{\xi_i}$ is $\mfm_{\xi_i}$-primary and $\xi_i \in \ok^n$.
    Then
    $$
    \dual{\cl A} = \left\{ \sum_{i = 1}^s \omega_i(\zaff) \eval_{\xi_i}(\zaff) \in \PolExp \mid \forall 1 \leq i \leq s, \, \omega_i(\zaff) \in D_{\xi_i}(\zaff) \right\}.
    $$
\end{theorem}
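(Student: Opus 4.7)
The plan is to combine the primary decomposition of $\cl A$ from \Cref{thm:artinian} with the local description of the dual from \Cref{prop:3.2}, proving the equality of the two sides of the stated identity by a double inclusion.

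For the inclusion $\supseteq$, I would take any series $\varphi = \sum_{i=1}^s \omega_i(\zaff)\eval_{\xi_i}(\zaff)$ with $\omega_i \in D_{I_{\xi_i}}(\zaff)$ and verify that $\varphi \in I^{\perp}$, which is identified with $\dual{\cl A}$. For every $q \in I = \bigcap_i I_{\xi_i}$ and every $i$, one has $q \in I_{\xi_i}$, so by the definition of $D_{I_{\xi_i}}$ the quantity $\omega_i(\partial_{\xaff})(q)(\xi_i)$ vanishes; using the polynomial-exponential representation \cref{eq:polyexprep} this means precisely that $\scp{\omega_i(\zaff)\eval_{\xi_i}(\zaff), q} = 0$, hence $\scp{\varphi,q} = 0$.

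For the inclusion $\subseteq$, I would exploit the idempotents $\ub_1, \dots, \ub_s \in \cl A$ from \cref{eq:idem}, lifted to representatives in $R$ (the choice of lift being irrelevant for any $\psi \in I^{\perp}$, since $p \star \psi$ only depends on $p$ modulo $I$). Given $\varphi \in I^{\perp} \simeq \dual{\cl A}$, I would write $\varphi = 1 \star \varphi = \sum_i \ub_i \star \varphi$ and show that $\varphi_i := \ub_i \star \varphi$ lies in $I_{\xi_i}^{\perp}$: for $q \in I_{\xi_i}$, the product $\ub_i q$ vanishes in every local component $\cl A_j$ (it is zero in $\cl A_i$ because $q$ is, and zero in $\cl A_j$ for $j \neq i$ because $\ub_i \ub_j = 0$), hence $\ub_i q \in I$ and $\scp{\varphi_i, q} = \scp{\varphi, \ub_i q} = 0$. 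By \Cref{prop:3.2} each $\varphi_i$ has the form $\omega_i(\zaff)\eval_{\xi_i}(\zaff)$ with $\omega_i \in D_{I_{\xi_i}}(\zaff)$, which gives the desired representation.

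To ensure the representation is well-defined (i.e., the sum on the right-hand side is direct), I would argue that if $\sum_i \omega_i(\zaff)\eval_{\xi_i}(\zaff) = 0$, then, since the points $\xi_i$ are pairwise distinct, the exponentials $\eval_{\xi_i}$ are linearly independent over any finite-dimensional polynomial coefficient space, forcing each $\omega_i \eval_{\xi_i} = 0$ and hence $\omega_i = 0$. The main technical point to handle carefully is the verification that the co-product $\ub_i \star \varphi$ both descends to the local quotient (producing an element of $\dual{\cl A_i}$) and admits the polynomial-exponential form, which amounts to bookkeeping between the abstract decomposition $\dual{\cl A} \simeq \bigoplus_i \dual{\cl A_i}$ and its concrete realization inside $\kk[[\zaff]]$; everything else is a direct application of the two quoted results.
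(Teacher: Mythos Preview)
The paper does not actually prove this theorem: it is stated with a citation to \cite[Theorem~2.17]{Mourrain2017} and no argument is given in the text, so there is no ``paper's own proof'' to compare against. Your proposal is a correct and essentially the standard proof: it combines the idempotent decomposition of \Cref{thm:artinian} with the local dual description of \Cref{prop:3.2}, exactly the two ingredients the paper records just before stating the theorem, and your handling of both inclusions (in particular the verification that $\ub_i q \in I$ via the orthogonality $\cl A_i \cdot \cl A_j = 0$) is sound.
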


Finally, we define the (dual) multiplication operators.

\begin{definition} \label{defn:MultOp}
    Let $q \in R$ and $I \subseteq R$ be an ideal.
    We define the \emph{multiplication-by-$q$ operator} on the quotient algebra $\cl A := R/I$ as
    $$
    M_q : \cl A \to \cl A, \quad p \mapsto pq.
    $$
    Its \emph{transposed multiplication operator} is
    $$
    M^t_q : \cl A^* \to \cl A^*, \quad \varphi \mapsto q \star \varphi.
    $$
\end{definition}

\subsection{Hankel operators}

\begin{definition}
    Let $\cl A$ be an algebra.
    The \emph{Hankel operator} of a dual element $\varphi \in \dual{\cl A}$ is
    $$ \bm H_{\varphi} : \cl A \to \dual{\cl A}, \quad p \mapsto p \star \varphi. $$
    If the algebra $\cl A = \oplus_{i=0}^{\infty} \cl A_i$ is graded, $\varphi \in \dual{\cl A_d}$ and $c \leq d$, we define the \emph{Hankel operator in degree} $(d-c,c)$ of $\varphi$ as
    $$ 
    \bm H^{d-c, c}_{\varphi} : \cl A_c \to \dual{\cl A_{d-c}}, \quad p \mapsto p \star \varphi.
    $$
    Similarly, if $\varphi \in \dual{\cl A_{\leq d}}$, we define its \emph{Hankel operator in degree} $(d-c,c)$ as
    $$ 
    \bm H^{d-c, c}_{\varphi} : \cl A_{\leq c} \to \dual{\cl A_{\leq d-c}}, \quad p \mapsto p \star \varphi.
    $$
    In both cases, for any vector spaces $V \subseteq \S_c$ and $W \subseteq \S_{d-c}$, we define the \emph{truncated Hankel operator} $\bm H_{\varphi}^{W,V}$ of $\varphi$ on $(V,W)$ as
    $$
        \bm H_{\varphi}^{W,V}:V \to \dual{W}, \quad p \mapsto (p\star \varphi)^{[W]},
    $$
    obtained by restricting the domain of $\bm H_{\varphi}^{d-c,c}$ to $V$, and its codomain to $\dual{W}$.
\end{definition}

If $B = \{v_1, \dots, v_r\}$ is a basis of $V$, and $B' = \{w_1, \dots, w_s\}$ is a basis of $W$, the matrix representing $\bm H_{\varphi}^{W,V}$ in $B$ and the dual basis of $B'$ is 
$$
H_{\varphi}^{B',B} := \big( \apply{\varphi, w_i v_j}  \big)_{\substack{1 \leq i \leq r \\ 1 \leq j \leq s}}.
$$
By convention, we denote $\bm H_{\varphi}^{{B'},{B}} := \bm H_{\varphi}^{\vspan{B'},\vspan{B}}$.

In particular, in our setting, any $f \in \Sd$ defines for every $c \leq d$ an Hankel operator $\bm H_{f^*}^{d-c,c} : \S_c \to \dual{\S_{d-c}}$ and an Hankel operator $\bm H_{\check{f}}^{d-c,c}: R_{\le c} \to \dual{R_{\le d-c}}$.
By \cref{eq:fcheck}, the matrix of the latter operator in the bases $\{ \xaff^{\alpha} \}_{|\alpha| \le c} \subset R_{\le c}$ and $\{\frac{\zaff^{\alpha}}{\alpha !}\}_{|\alpha| \leq d-c} \subset \dual{R_{\leq d-c}}$ is
$$
H_{\check{f}}^{d-c, c}:= \big( \apol{f, \mathfrak{h}_{0}(\xaff^{\alpha+\beta})} \big)_{|\beta|\le d-c,|\alpha|\le c}.
$$
Moreover, for every $\varphi \in \dual{R}$, the Hankel operator $\bm H_{\varphi}$ is symmetric, namely, for every $p,q \in R$ we have $\bm H_{\varphi}(p)(q)= \bm H_{\varphi}(q)(p) = \varphi(p\,q)$.
One easily verifies that its kernel 
\begin{equation} \label{eq:Iphi}
I_{\varphi} := \ker \bm H_{\varphi} = \{ p\in R \mid p\star \varphi = 0\} = \ann(\varphi)
\end{equation}
is an ideal, hence it defines a quotient algebra
\begin{equation} \label{eq:Aphi}
    \cl A_{\varphi} := R/I_{\varphi}.
\end{equation}
For every $V\subset R_{\le c}$ and $W\subset R_{\le d-c}$ and $\varphi \in \dual{R}$, we also have by definition that
\begin{equation}\label{eq:restrict}
\bm H_{\varphi}^{W,V} = \bm H_{\varphi^{[\le d]}}^{W,V}
\end{equation}

\begin{remark}
    The ideal $I_{\varphi}$ is also  the \emph{annihilator} of $\varphi$.
    Here we define it in the primal space $R$, while it is usually defined in dual spaces via contraction or derivation (see e.g. \cite[Chap. 1]{Iarrobino1999} or \cite[Sec. 2]{Bernardi2024}).
\end{remark}

Hereafter, we will use the generalized Kronecker theorem: 
\begin{theorem}[{\cite[Theorem 3.1]{Mourrain2017}}]\label{thm:kronecker}
Let $\varphi \in \kk[[\zaff]] \simeq R^*$. Then
$$\rank \bm H_{\varphi}<\infty \quad \iff \quad \varphi \in \PolExp.$$
Moreover, if $\varphi = \sum_{i=1}^{r} \omega_i(\zaff) \eval_{\xi_i} (\zaff)\in \PolExp$, then 
$\dim \cl A_{\varphi}=\rank \bm H_{\varphi} = \sum_{i=1}^{s} \dim \invsys{\omega_i}$ and 
$$
I_{\varphi} = \cap_{i = 1}^s \vspan{p \in R \mid \forall \theta \in \invsys{\omega_i}, \theta(\partial_{\xaff})(p)(\xi_i)=0}.
$$
\end{theorem}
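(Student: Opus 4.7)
The plan is to treat the equivalence and the ``moreover'' part in parallel, reducing both to the local structure theorem of Artinian algebras (\Cref{thm:artinian}) and the description of the dual of a local Artinian quotient given by \Cref{prop:3.2} and \Cref{thm:struct dual}.

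\textbf{Easy direction ($\Leftarrow$) and the bound on the rank.} First I would iterate the contraction formula \eqref{eq:exprule}: for $p\in R$ one gets by induction
\[
p\star \big(\omega(\zaff)\,\eval_{\xi}(\zaff)\big)
= \big(p(\partial_{\zaff}+\xi)(\omega)\big)\,\eval_{\xi}(\zaff),
\]
so that $\invsys{\omega\,\eval_{\xi}}=\invsys{\omega}\,\eval_{\xi}$, which is finite-dimensional since $\omega$ is a polynomial. If $\varphi=\sum_{i=1}^{s}\omega_i\,\eval_{\xi_i}\in\PolExp$, then $\invsys{\varphi}\subseteq \sum_{i}\invsys{\omega_i}\,\eval_{\xi_i}$ is finite-dimensional. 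Since $\rank \bm H_\varphi=\dim\invsys{\varphi}$ (the image of $\bm H_\varphi$ is $\invsys{\varphi}$, and its kernel is $I_\varphi$), the finiteness of $\rank \bm H_\varphi$ follows and we already have the upper bound $\rank \bm H_\varphi \le \sum_i \dim\invsys{\omega_i}$.

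\textbf{Hard direction ($\Rightarrow$).} If $\rank \bm H_\varphi <\infty$, then $\cl A_\varphi=R/I_\varphi$ is a finite-dimensional $\K$-algebra, hence Artinian. By \Cref{thm:artinian}, $\cl A_\varphi\simeq \bigoplus_{i=1}^{s}\cl A_i$ with $\cl A_i$ a local Artinian algebra at some $\xi_i\in\overline{\K}^n$. Dualizing gives $\dual{\cl A_\varphi}=\bigoplus_{i}\dual{\cl A_i}$, and by \Cref{prop:3.2} each $\dual{\cl A_i}=D_i(\zaff)\,\eval_{\xi_i}(\zaff)$ for a finite-dimensional $D_i\subset\K[\zaff]$ stable under derivations. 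Since $\varphi\in I_\varphi^\perp\simeq\dual{\cl A_\varphi}$, writing the decomposition of $\varphi$ with respect to the idempotents~\eqref{eq:idem} yields $\varphi=\sum_{i}\omega_i(\zaff)\,\eval_{\xi_i}(\zaff)$ with $\omega_i\in D_i$, proving $\varphi\in\PolExp$ (this is essentially the content of \Cref{thm:struct dual}).

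\textbf{Dimension equality and description of $I_\varphi$.} For the equality $\rank \bm H_\varphi=\sum_i\dim\invsys{\omega_i}$ I would use the uniqueness of the polynomial-exponential decomposition (distinct $\xi_i$'s give $\K[\zaff]$-linearly independent families $\eval_{\xi_i}$) together with the formula $\invsys{\omega_i\eval_{\xi_i}}=\invsys{\omega_i}\eval_{\xi_i}$: this shows $\invsys{\varphi}=\bigoplus_{i}\invsys{\omega_i}\eval_{\xi_i}$, matching the upper bound of the easy direction and turning it into an equality. For the ideal description, the same idempotent decomposition gives $I_\varphi=\bigcap_{i}\ann(\omega_i\eval_{\xi_i})$; it then remains to identify, for each $i$, the local annihilator as
\[
\ann\bigl(\omega_i\eval_{\xi_i}\bigr)=\bigl\{p\in R\mid \forall\,\theta\in\invsys{\omega_i},\ \theta(\partial_{\xaff})(p)(\xi_i)=0\bigr\}.
\]
One inclusion is immediate by evaluating $p\star(\theta\,\eval_{\xi_i})$ at $1\in R$; the reverse inclusion is the classical Macaulay correspondence between $\mfm_{\xi_i}$-primary ideals and finite-dimensional, derivation-closed inverse systems, which can be proved by a Leibniz-type expansion combined with the fact that $\invsys{\omega_i}$ is closed under derivations.

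\textbf{Main obstacle.} The delicate point is the last step: showing that the pointwise condition ``$\theta(\partial_{\xaff})(p)(\xi_i)=0$ for all $\theta\in\invsys{\omega_i}$'' already forces $p\star(\omega_i\eval_{\xi_i})=0$ as a functional, not merely its value at~$1$. This is where the stability of $\invsys{\omega_i}$ under all derivations $\partial_{z_j}$, together with a Taylor expansion around $\xi_i$, must be invoked carefully in order to propagate the vanishing from $q=1$ to all test polynomials $q\in R$.
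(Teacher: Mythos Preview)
The paper does not give its own proof of this statement: it is quoted verbatim as \cite[Theorem~3.1]{Mourrain2017} and used as a black box, so there is no in-paper argument to compare against. Your outline is essentially the standard proof and is correct in structure; it is also consistent with the tools the paper sets up (\Cref{thm:artinian}, \Cref{prop:3.2}, \Cref{thm:struct dual}, and the contraction rule~\eqref{eq:exprule}).

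One remark on what you flag as the ``main obstacle'': the reverse inclusion for the local annihilator is actually immediate once you observe the general identity $I_\psi=(\invsys{\psi})^{\perp}$, valid for any $\psi\in R^*$. Indeed, $p\in(\invsys{\psi})^{\perp}$ means $\scp{q\star\psi,p}=\scp{\psi,pq}=0$ for all $q\in R$, i.e.\ $p\star\psi=0$. Applying this to $\psi=\omega_i\,\eval_{\xi_i}$ and using $\invsys{\omega_i\,\eval_{\xi_i}}=\invsys{\omega_i}\,\eval_{\xi_i}$ (which you already established from~\eqref{eq:exprule}) gives the claimed description of $\ann(\omega_i\,\eval_{\xi_i})$ directly, with no need for a separate Taylor/Leibniz argument. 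The genuinely nontrivial ingredient in your proof is rather the \emph{directness} of the sum $\invsys{\varphi}=\bigoplus_i\invsys{\omega_i}\,\eval_{\xi_i}$ for distinct $\xi_i$, which you invoke but do not justify; this is where the linear independence of polynomial-exponential families (a Vandermonde-type argument, or equivalently the primary decomposition of $I_\varphi$) must be used.
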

As a consequence, we have the following lemma.
\begin{lemma} \label{lem:inclusion}
For any $\varphi, \varphi' \in \PolExp$, if $\rank \bm H_\varphi \le \rank \bm H_{\varphi'}$ and $I_{\varphi} \subseteq I_{\varphi'}$, then 
$$
I_{\varphi}=I_{\varphi'} \quad \textnormal{and} \quad \cl A_{\varphi}=\cl A_{\varphi'}.
$$
\end{lemma}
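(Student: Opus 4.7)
The plan is to exploit the surjection induced by the inclusion $I_{\varphi} \subseteq I_{\varphi'}$ together with the finite-dimensionality coming from Kronecker's theorem, and conclude by a simple dimension count.

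More precisely, since $\varphi, \varphi' \in \PolExp$, \Cref{thm:kronecker} gives
$$
\dim \cl A_{\varphi} = \rank \bm H_{\varphi} < \infty \qquad \textnormal{and} \qquad \dim \cl A_{\varphi'} = \rank \bm H_{\varphi'} < \infty.
$$
The hypothesis $I_{\varphi} \subseteq I_{\varphi'}$ yields a canonical surjective $\kk$-algebra morphism $\pi : \cl A_{\varphi} = R/I_{\varphi} \twoheadrightarrow R/I_{\varphi'} = \cl A_{\varphi'}$, so that $\dim \cl A_{\varphi} \ge \dim \cl A_{\varphi'}$. Combined with the assumption $\rank \bm H_{\varphi} \le \rank \bm H_{\varphi'}$, this forces $\dim \cl A_{\varphi} = \dim \cl A_{\varphi'}$.

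Since $\pi$ is a surjection between finite-dimensional $\kk$-vector spaces of the same dimension, it must be an isomorphism, hence its kernel $I_{\varphi'}/I_{\varphi}$ is zero. This gives $I_{\varphi} = I_{\varphi'}$ and therefore $\cl A_{\varphi} = \cl A_{\varphi'}$.

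The argument is essentially immediate once Kronecker's theorem provides finite-dimensionality; there is no genuine obstacle, only the need to invoke \Cref{thm:kronecker} to convert a rank inequality on Hankel operators into a dimension inequality on the quotient algebras.
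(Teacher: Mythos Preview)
Your proof is correct and follows essentially the same argument as the paper: invoke \Cref{thm:kronecker} to identify $\dim \cl A_{\varphi}$ with $\rank \bm H_{\varphi}$, use the ideal inclusion to compare the two dimensions, and conclude by a dimension count. If anything, your phrasing is slightly cleaner, since you correctly describe the canonical map $\cl A_{\varphi} \to \cl A_{\varphi'}$ as a surjection rather than as an inclusion.
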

\begin{proof}
Since $I_{\varphi} \subseteq I_{\varphi'}$, we have a canonical inclusion $\cl A_{\varphi'} \hookrightarrow \cl A_{\varphi}$, therefore $\dim \cl A_{\varphi'} \le \dim \cl A_{\varphi}$.  
However, by \Cref{thm:kronecker}, we have $\dim \cl A_{\varphi} = \rank \bm H_{\varphi} \le \rank \bm H_{\varphi'} = \dim \cl A_{\varphi'}$.
Thus $\dim \cl A_{\varphi'} = \dim \cl A_{\varphi} < \infty$, hence $\cl A_{\varphi}=\cl A_{\varphi'}$ and $I_{\varphi}= I_{\varphi'}$.
\end{proof}

Finally, we notice that the (transposed) multiplication operator can be read from certain Hankel operators, as follows.
\begin{lemma}[{\cite[Lemma 3.7]{Mourrain2017}}]\label{lem:hankel mult}
For any $\varphi \in \dual{R}$ and $g\in R$, we have
$$
 \bm H_{g \star \varphi} = \bm H_{\varphi} \circ \bm M_{g} = \bm M_{g}^{t} \circ \bm H_{\varphi} \in \mathrm{Hom}(\cl A_{\varphi}, \dual{\cl A_{\varphi}}).
$$
\end{lemma}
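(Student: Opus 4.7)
The proof plan is to derive both equalities by invoking the associativity/commutativity of the contraction action $\star$, and then to check well-definedness on the quotient $\cl A_\varphi$.

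First I would recall from the definitions that, for any $p, q \in R$ and $\psi \in \dual{R}$, the defining property $\scp{p \star \psi, q} = \scp{\psi, pq}$ immediately yields the chain of identities
\begin{equation*}
\scp{(gp) \star \varphi, q} = \scp{\varphi, gpq} = \scp{g \star \varphi, pq} = \scp{p \star (g \star \varphi), q} = \scp{g \star (p \star \varphi), q},
\end{equation*}
since the product $gpq$ in $R$ is commutative. Because this holds for all $q \in R$, we obtain the key identity $(gp) \star \varphi = p \star (g \star \varphi) = g \star (p \star \varphi)$ in $\dual{R}$.

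Next, I would translate this identity into the language of the three operators involved. Applying $\bm H_{g \star \varphi}$ to an element $p \in R$ gives $p \star (g \star \varphi)$ by definition, which equals $(gp) \star \varphi = \bm H_\varphi(gp) = \bm H_\varphi(\bm M_g(p))$, establishing the first equality. Dually, the same expression equals $g \star (p \star \varphi) = \bm M_g^t(p \star \varphi) = \bm M_g^t(\bm H_\varphi(p))$, establishing the second.

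Finally, I would verify that these operators descend to $\cl A_\varphi = R / I_\varphi$ and $\dual{\cl A_\varphi} = I_\varphi^\perp$. On the primal side, $\bm M_g$ preserves the ideal $I_\varphi$: if $p \in I_\varphi$, then $(gp) \star \varphi = g \star (p \star \varphi) = 0$, so $gp \in I_\varphi$. On the dual side, $\bm M_g^t$ preserves $I_\varphi^\perp$ because if $\psi \in I_\varphi^\perp$ and $p \in I_\varphi$ then $\scp{g \star \psi, p} = \scp{\psi, gp} = 0$ since $gp \in I_\varphi$. Similarly, $\bm H_\varphi$ factors through $\cl A_\varphi$ by definition of $I_\varphi = \ker \bm H_\varphi$, and its image lies in $I_\varphi^\perp$ since $\scp{p \star \varphi, q} = \scp{\varphi, pq}$ vanishes whenever $q \in I_\varphi$ (by symmetry $\scp{\varphi, pq} = \scp{q \star \varphi, p} = 0$). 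There is no real obstacle: the argument is purely formal manipulation of the $\star$-action, and the only subtle point is the ideal-preservation check for the quotient, which follows from commutativity of $R$.
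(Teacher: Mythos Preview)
Your proof is correct. The paper does not actually give its own proof of this lemma; it simply cites \cite[Lemma~3.7]{Mourrain2017}, so there is nothing in the text to compare against beyond the cited reference. Your argument via the identity $(gp)\star\varphi = p\star(g\star\varphi) = g\star(p\star\varphi)$, followed by the check that $I_\varphi$ is stable under $\bm M_g$ so that everything descends to $\cl A_\varphi$ and $\dual{\cl A_\varphi}$, is the standard and natural verification.
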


\subsection{Linear functionals associated with a GAD}

For every $i \in \{1, \dots, s\}$, let us consider the linear form $\ell_i = (\xiprj_i, \xprj)\in \cl S_1$ associated with the point $\xiprj_i \in \K^{n+1}$.
If $\K$ is infinite, up to a generic change of variables, we can assume $\xiprj_{i,0} \neq 0$.
If $\K$ is also algebraically closed, up to moving these scalars to the corresponding $\omega_i$ in a GAD as in \cref{eq:gad}, we can further assume that $\xiprj_i = (1, \xi_i)$ for $\xi_i \in \K^n$.
To simplify exposition, we will assume this is always the case.

\begin{definition} \label{defn:evincedbyGAD}
We define the linear functional \emph{associated with the GAD} $\sum_{i=1}^{s} \omega_{i}\, \ell_i^{d-k_i} \in \S_d$ as
\begin{equation} \label{eq:extendedphi}
    \varphi := \sum_{i=1}^{s} {\omega}_i^{d,\ell_i,\xaff}(\bm z) \eval_{\xi_i}(\bm z) \in \KK[[\bm z]] \simeq \dual{R},
\end{equation}
where the ${\omega}_i^{d,\ell_i,\xaff}$ are defined as in \cref{eq:omega v}.
Its \emph{associated ideal} is the \emph{annihilator} of $\varphi$, namely $I_{\varphi} := \ann (\varphi) = \set{p\in R : p\star \varphi = 0 }$, while its \emph{associated quotient algebra} is $\cl A_{\varphi} := R/I_{\varphi}$.
\end{definition}

\begin{remark} \label{rmk:Ixiperp}
    Let $I_{\xi}$ be the ideal associated with $\omega\, \ell^{d-k}$ with $\ell = x_0 + \xi_1 x_1 + \cdots + \xi_n $.
    By \Cref{prop:3.2} and \Cref{thm:kronecker}, we have
    \[ I_{\xi}^{\perp} 
    = \invsys{{\omega}^{d,\ell,\xaff}(\zaff)} \eval_{\xi}(\zaff). \]
    In particular, $\cl A_{\xi} = R/I_{\xi}$ is a \emph{Gorenstein} algebra, namely its dual is a principal module generated by ${\omega}^{d,\ell,\xaff}$, and we have
    $$
    \dim_{\KK} (I_{\xi}^{\perp}) = \rank_{\ell}(\omega).
    $$
\end{remark}

The following theorem shows that, under suitable conditions on a basis of $\cl A_{\varphi}$, the ideal $I_{\varphi}$ is generated by the kernel of an Hankel operator constructed from $\check{f}$.
This is the crucial idea for recovering the GAD of $f$ associated with $I_{\varphi}$, since $\check{f}$ is computed directly from any expression of $f$, regardless of its GADs.
For a given set $B \subseteq R$, we denote $B^{+} := B \cup x_{1} B \cup \cdots \cup x_{n} B$.

\begin{theorem}\label{thm:hankel tnf}
In the above setting, let $B$ and $B'$ be bases of $\cl A_{\varphi}$, with $1 \in B$.
Let $c \leq d$ and $A \subset R_{\le c}, A'\subset R_{\le d-c}$ be sets of linearly independent polynomials 
such that $B^{+} \subset \vspan{A} \subseteq R_{\le c}$ and $B' \subset \vspan{A'} \subseteq R_{\le d-c}$. 
Let $H := H_{\check{f}}^{{A'},{A}}$ be the matrix of the restricted Hankel operator $\bm H := \bm H_{\check{f}}^{A',A}$ with respect to $A,A'$.
Then 
\begin{enumerate}
\item\label{thm1} $H_{0} := H_{\check{f}}^{B',B}$ is {invertible}.

\item\label{thm2} $\rank H = \dim \cl A_{\varphi} = \sum_{i=1}^{s} \dim \invsys{{\omega_i}}^{d, \ell_i, \xaff}$.

\item\label{thm3} For every $1 \leq i \leq n$, let 
$H_{i} := H_{\check{f}}^{{B', x_{i} B}}$.
Then
$M_{x_i} = H_{0}^{-1} H_{i}$
is the matrix of the operator of the multiplication-by-$x_i$ operator, in the basis $B$ of $\cl A_{\varphi}$.

\item\label{thm4} $\ker \bm H = I_{\varphi}\cap \vspan{A}$,
$(\ker \bm H) = I_{\varphi}$ and
$\textup{im}\, \bm H = I_{\varphi}^{\perp}|_{\vspan{A'}} := \{ \phi|_{\vspan{A'}} \mid \phi \in I_{\varphi}^{\perp} \simeq \cl A_{\varphi}^* \}$.
\end{enumerate}
\end{theorem}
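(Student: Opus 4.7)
The plan is to first reduce everything to the functional $\varphi$ of (\ref{eq:extendedphi}): since $\check{f} = \varphi^{[\le d]}$ by (\ref{eq:fcheckastrunc}) and the product $p\,q$ of any $p \in \vspan{A} \subseteq R_{\le c}$ and $q \in \vspan{A'} \subseteq R_{\le d-c}$ lies in $R_{\le d}$, equation (\ref{eq:restrict}) yields $\bm H_{\check f}^{A',A} = \bm H_\varphi^{A',A}$, so all four parts become statements about $\bm H_\varphi$.

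For parts (\ref{thm1}) and (\ref{thm2}), the key observation is that $\bm H_\varphi$ descends to an isomorphism $\cl A_\varphi \xrightarrow{\sim} \dual{\cl A_\varphi}$: by Theorem \ref{thm:kronecker} we have $\rank \bm H_\varphi = \dim \cl A_\varphi < \infty$, and by (\ref{eq:Iphi}) we have $\ker \bm H_\varphi = I_\varphi$, so the induced map on $\cl A_\varphi = R/I_\varphi$ is an injection between spaces of equal finite dimension. Its matrix in the bases $B$ and $(B')^{*}$ is exactly $H_0$, which gives (\ref{thm1}). Part (\ref{thm2}) then follows from the factorization
$$
\vspan{A} \twoheadrightarrow \cl A_\varphi \xrightarrow{\bm H_\varphi} \dual{\cl A_\varphi} \hookrightarrow \dual{\vspan{A'}},
$$
where the first arrow is surjective because $B \subseteq A$ descends to a basis of $\cl A_\varphi$, and the last is the transpose of the analogous surjection for $A'$; hence $\rank \bm H = \dim \cl A_\varphi$, and Remark \ref{rmk:Ixiperp} together with Theorem \ref{thm:kronecker} identify this dimension with $\sum_i \dim \invsys{\omega_i}^{d,\ell_i,\xaff}$. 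Part (\ref{thm3}) is then immediate from Lemma \ref{lem:hankel mult}: the hypothesis $B^{+} \subseteq \vspan{A}$ ensures that $H_i = H_{\check f}^{B', x_i B}$ is a legitimate submatrix of $H$ representing $\bm H_{x_i \star \varphi}$ in the bases $B$ and $(B')^{*}$, and the lemma gives $H_i = H_0\, M_{x_i}$, which is inverted using (\ref{thm1}).

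For part (\ref{thm4}), the identity $\ker \bm H = I_\varphi \cap \vspan{A}$ reduces as follows: if $\bm H(p) = 0$ for $p \in \vspan{A}$, then in particular $\scp{\varphi, p\,b'} = 0$ for every $b' \in B'$; writing $p \equiv \sum_{b \in B} c_b\, b \pmod{I_\varphi}$ gives $H_0^{\mathrm T} (c_b)_b = 0$, so $p \in I_\varphi$ by (\ref{thm1}), while the reverse inclusion is trivial. The equality $\im \bm H = I_\varphi^\perp|_{\vspan{A'}}$ then follows from the factorization above together with the dimension count $\rank \bm H = \dim \cl A_\varphi = \dim I_\varphi^\perp|_{\vspan{A'}}$. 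The most delicate point, which I expect to be the main obstacle, is the claim $(\ker \bm H) = I_\varphi$: only $\supseteq$ is nontrivial, and the hypothesis $B^{+} \subseteq \vspan{A}$ is again essential. For each $b \in B^{+} \setminus B$, the unique normal form $r_b \in \vspan{B}$ of $b$ modulo $I_\varphi$ produces a border relation $b - r_b \in I_\varphi \cap \vspan{A} = \ker \bm H$, and a standard border basis reduction, relying on the commuting multiplication matrices from (\ref{thm3}), shows that every $p \in R$ admits a unique normal form in $\vspan{B}$ modulo the ideal generated by these relations; for $p \in I_\varphi$ this normal form must vanish, placing $p$ in $(\ker \bm H)$.
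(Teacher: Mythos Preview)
Your proposal is correct and mirrors the paper's proof closely: the reduction to $\varphi$ via \eqref{eq:restrict}, the factorization through the isomorphism $\cl A_\varphi \xrightarrow{\sim} \dual{\cl A_\varphi}$ for parts (\ref{thm1})--(\ref{thm2}), and the use of Lemma~\ref{lem:hankel mult} for part (\ref{thm3}) are exactly what the paper does.

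The one place where the paper proceeds differently is the generation claim $(\ker \bm H) = I_\varphi$. The paper does not invoke commutativity of the $M_{x_i}$ (which is automatic here, since they are genuine multiplication operators in the commutative algebra $\cl A_\varphi$, so citing (\ref{thm3}) for this is beside the point) nor a black-box border basis theorem. Instead it gives the reduction directly by a short induction over the filtration $B^{\vspan{\le e}} := \{\sum_j q_j b_j : \deg q_j \le e\}$, which exhausts $R$ because $1\in B$: each $x_i b_j \in B^+ \subset \vspan{A}$ is split as its normal form in $\vspan{B}$ plus an element of $K = I_\varphi \cap \vspan{A}$, and the step follows. This is precisely the ``border basis reduction'' you have in mind, but the standard border basis criteria you allude to typically assume that $B$ is a monomial order ideal or a connected-to-1 set, neither of which is part of the hypotheses here; the paper's explicit induction is therefore the safer way to close this step.
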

\begin{proof} (\ref{thm1}-\ref{thm2}):
Let $r := \dim \cl A_{\varphi}$ and $\varphi$ defined as in \cref{eq:extendedphi}.
By \Cref{thm:kronecker}, we have
$$
    r = \rank \bm H_{\varphi} = \sum_{i=1}^{s} \dim \invsys{{\omega_i}^{d, \ell_i, \xaff}},
$$
while by \cref{eq:restrict} and \Cref{lem:w xi dual} we obtain
$$
H_{\varphi}^{A',A} = H_{\check{f}}^{A',A} = H,
$$
hence $\rank H \le \rank H_{\varphi}=r$.
Since $B$ and $B'$ are bases of $\cl A_{\varphi}$, we have the vector space additive decompositions $R = \vspan{B} \oplus I_{\varphi} = \vspan{B'} \oplus I_\varphi$.
As $I_{\varphi}=\ker \bm H_{\varphi}$ and $\bm H_{\varphi}$ is symmetric, we deduce from \cref{eq:restrict}  and \Cref{lem:w xi dual} that 
$$
r = \rank \bm H_{\varphi} = \rank \bm H_{\varphi}^{R,\vspan{B'}} 
 = \rank \bm H_{\varphi}^{\vspan{B'},R} 
 = \rank \bm H_{\varphi}^{{B'},{B}} 
 = \rank H_{\check{f}}^{B',B}.
$$
The size of $B$ and $B'$, bases of $\cl A_{\varphi}$, is $r$.
This  implies that $H_0 = H_{\check{f}}^{B',B}$ is invertible and that $\rank H \ge \rank H_{0} = r$, thus we conclude $\rank H = r$.

(\ref{thm3}): By \Cref{lem:hankel mult}, we have
$$
\bm H_{x_i \star \varphi} = \bm H_{\varphi} \circ \bm M_{x_{i}}.
$$
We choose the basis $B$ of $\cl A_{\varphi}$ and the dual basis of $B'$ in $\dual{\cl A_{\varphi}}$.
Since we assumed $B^+\subset \vspan{A}$, by \cref{eq:restrict} and \Cref{lem:w xi dual}, we obtain the following matrix relations:
$$
H_i
= H^{B',x_i B}_{\check{f}}
= H^{B',x_i B}_{\varphi} 
= H^{B',B}_{x_i \star \varphi} 
= H^{B',B}_{\varphi} M_{x_i} 
= H^{B',B}_{\check{f}} M_{x_i}  = H_0 \, M_{x_i}.
$$
The conclusion follows by inverting $H_0$, which is possible by part (\ref{thm1}).

(\ref{thm4}): We have 
$$
K := \ker \bm H_{\check{f}}^{{A'},{A}} 
= \ker \bm H_{\varphi}^{{A'},{A}} 
= \ker \bm H_{\varphi}^{R,\vspan{A}} = I_{\varphi} \cap \vspan{A},
$$
and consequently $(K) \subseteq I_{\varphi}$.
We now prove the opposite inclusion.
Let $B=\{b_1, \ldots, b_r\}$ with $b_1=1$.
We define $B^{\vspan{\le e}} := \{\sum_{j=1}^{r} q_j b_j \mid q_j \in R_{\le e}\}$.
Since $1 \in B$, then $R = \bigcup_{e=0}^{\infty} B^{\vspan{\le e}}$.
We prove by induction on $e \in \NN$ that $B^{\vspan{\le e}}\cap I_{\varphi} \subset (K)$.
The base step $e=0$ holds, since $B^{\vspan{\le 0}} = \vspan{B}$ and $\vspan{B}\cap I_{\varphi}=\{0\} \subseteq (K)$. 
Now we assume inductively that it holds for a given $e \geq 0$ and let $p\in B^{\vspan{\le e+1}}\cap I_{\varphi}$.
We can write $p$ as
$$
p = \sum_{i=0}^{n} x_i \sum_{j=1}^r q_{i,j} b_j 
= \sum_{j=1}^r \sum_{i=0}^{n} q_{i,j} x_i  b_j,
$$
with $x_0=1$ and $\deg(q_{i,j}) \le e$.
Since $R =  \vspan{B} \oplus I_{\varphi}$, we write $x_i b_j = b_{i,j} + \kappa_{i,j}$ for some $b_{i,j}\in \vspan{B}, \kappa_{i,j}\in K$.
Thus, we have
$$
p 
= \sum_{j=1}^r \sum_{i=0}^{n} q_{i,j} b_{i,j} + \sum_{j=1}^r \sum_{i=0}^{n} q_{i,j}\kappa_{i,j} \in \big(B^{\vspan{\le e}} + (K) \big) \cap I_{\varphi} 
= \big(B^{\vspan{\le e}} \cap I_{\varphi}\big) + (K).
$$
By induction hypothesis, $B^{\vspan{\le e}} \cap I_{\varphi} \subseteq (K)$, therefore $p\in (K)$.
This concludes the induction step, so $B^{\vspan{\le e}}\cap I_{\varphi} \subseteq (K)$ for every $e \in \NN$, which implies $I_{\varphi} \subseteq (K)$ .

To conclude the proof, we observe that
$$
\textup{im}\, \bm H_{\check{f}}^{{A'}, {A}} 
 = \textup{im}\, \bm H_{\varphi}^{{A'}, {A}}
 = \textup{im}\, \bm H_{\varphi}^{{A'}, {R}}
 \subseteq I_{\varphi}^{\perp}|_{A'},
$$
since $R = \vspan{A} + I_{\varphi}$ and $I_{\varphi} = \ker \bm H_{\varphi}$.
Conversely, since 
$$
    \dim \dual{\cl A_{\varphi}} = \dim \cl A_{\varphi} = r = \rank \bm H_{\varphi}^{B',B},
$$
then for every $\phi \in I_{\varphi}^{\perp} \simeq \dual{\cl A_{\varphi}}$, there exists $b \in \vspan{B}$ such that $\phi = \bm H_{\varphi}(b)$.
Therefore, we conclude that  
$\phi|_{\vspan{A'}} = \bm H_{\varphi}^{A',A}(b) = \bm H_{\check{f}}^{{A'},{A}}(b) \in \textup{im}\, \bm H_{\check{f}}^{{A'},{A}}$.
\end{proof}

\begin{remark}
Part \eqref{thm4} of \Cref{thm:hankel tnf} implies that $\bm H_{f^*}^{A',A}$ is a Truncated Normal Form for $I_{\varphi}$ (see \cite{Mourrain2021,Telen2018,mourrain:hal-05135952}). That is, it yields a normal form modulo $I_{\varphi}$, a key ingredient used in algebraic methods for solving polynomial equations (e.g., Gröbner bases, border bases, resultant-based methods). 
\end{remark}


\subsection{The nil-index}

Let $\mathcal{A}_{\xi} = \KK[\xaff]/I_{\xi}$ be a local Artinian algebra defining a point $\xi \in \KK^n$, namely $I_{\xi}$ is $\mathfrak{m}_{\xi}$-primary. 

\begin{definition}
    The \textit{nil-index} of $\mathcal{A}_{\xi}$, denoted by $\mathcal{N}(\mathcal{A}_{\xi})$, is the minimal $N \in \NN$ such that, for every $a(\xaff) \in \mathcal{A}_{\xi}$, we have
    \begin{equation*}
        \big(a(\xaff) - a(\xi)\big)^N = 0 \in \mathcal{A}_{\xi}.
    \end{equation*}
\end{definition}

\begin{remark} \label{rmk:nilindex}
    Equivalently, $\mathcal{N}(\mathcal{A}_{\xi})$ is the minimal $N \in \NN$ such that
    \begin{equation}
        \mathfrak{m}_{\xi}^N = 0 \hspace{0.2cm} \textnormal{ in } \hspace{0.2cm} \mathcal{A}_{\xi} = \KK[\xaff] / I_{\xi}. 
    \end{equation}
    Thus, $\mathcal{N}(\mathcal{A}_{\xi})$ is the minimal degree $N$ such that
    \begin{equation*}
        \mathfrak{m}_{\xi}^N \subseteq I_{\xi} \subseteq \mathfrak{m}_{\xi}.
    \end{equation*}
    Hence, $\mathcal{N}(\mathcal{A}_{\xi})-1$ is the \emph{socle degree} of $\mathcal{A}_{\xi}$.
\end{remark}

The following proposition relates the structure of the terms appearing in a GAD with the nil-index of the associated algebras.

\begin{proposition} \label{prop:nilindex-socle}
Let $\mathcal{A}_{\xi} = \KK[\xaff]/I_{\xi}$ be the local Artinian algebra associated with the GAD $\omega\, \ell^{d-k} \in \mathcal{S}_d$. Then
\begin{equation*}
    \mathcal{N}(\mathcal{A}_{\xi}) - 1 = k = \deg(\omega).
\end{equation*}
\end{proposition}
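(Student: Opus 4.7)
The plan is to compute the nil-index directly from the explicit description of $I_{\xi}^{\perp}$ provided by \Cref{rmk:Ixiperp}, namely
$$I_{\xi}^{\perp} = \invsys{\omega^{d,\ell,\xaff}(\zaff)}\, \eval_{\xi}(\zaff),$$
and then translate the equality $\mathfrak{m}_{\xi}^{N} \subseteq I_{\xi}$ (cf.\ \Cref{rmk:nilindex}) into a bound on the degree of the generating series $\omega^{d,\ell,\xaff}$.

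First, I would replace the statement $\mathfrak{m}_{\xi}^{N} \subseteq I_{\xi}$ by the dual condition: every functional in $I_{\xi}^{\perp}$ vanishes on $\mathfrak{m}_{\xi}^{N}$. By \eqref{eq:polyexprep}, a generic element of $I_{\xi}^{\perp}$ acts on $p \in R$ as $p \mapsto \theta(\partial_{\xaff})(p)(\xi)$, with $\theta$ ranging over $\invsys{\omega^{d,\ell,\xaff}}$. After the translation $\xaff \mapsto \xaff + \xi$ (under which $\mathfrak{m}_{\xi}$ becomes $(\xaff)$ and the pairing becomes $\theta(\partial_{\xaff})(p)(0)$), a direct monomial computation shows that this expression vanishes on every $p \in (\xaff)^{N}$ exactly when $\deg \theta < N$. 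Hence
$$\mathcal{N}(\mathcal{A}_{\xi}) = 1 + \max_{\theta \in \invsys{\omega^{d,\ell,\xaff}}} \deg \theta = 1 + \deg \omega^{d,\ell,\xaff}.$$

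Finally, I would compute $\deg \omega^{d,\ell,\xaff}$. Decomposing $\omega = \sum_{j=0}^{k} \omega_{j}\, \ell^{k-j}$ with $\omega_{j} \in \S_{j}(\xaff)$, formula \eqref{eq:omega v} gives
$$\omega^{d,\ell,\xaff}(\zaff) = \tfrac{1}{d!}\sum_{j=0}^{k}(d-j)!\, \omega_{j}(\zaff),$$
whose top-degree homogeneous component in $\zaff$ is $\tfrac{(d-k)!}{d!}\, \omega_{k}(\zaff)$. Since the Taylor decomposition encodes $\ell$-divisibility via $\ell \mid \omega \iff \omega_{k} = 0$, and since the GAD hypothesis requires $\ell \nmid \omega$, we conclude $\omega_{k} \neq 0$ and therefore $\deg \omega^{d,\ell,\xaff} = k = \deg \omega$, yielding the claim.

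No step looks genuinely hard: the only mild subtlety is justifying the degree equality $\deg \omega^{d,\ell,\xaff} = \deg \omega$, which hinges on the clean characterization $\omega_{k} \neq 0 \iff \ell \nmid \omega$ coming from the Taylor expansion along the basis $\{\ell, \xaff\}$. Once this is in place, the pairing-degree argument completes the proof in a few lines.
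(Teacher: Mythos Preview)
Your proposal is correct and follows essentially the same route as the paper: both arguments use \Cref{rmk:Ixiperp} to identify $I_{\xi}^{\perp}$ with $\invsys{\omega^{d,\ell,\xaff}}\,\eval_{\xi}$, reduce the nil-index computation to $\mathcal{N}(\mathcal{A}_{\xi}) = 1 + \deg \omega^{d,\ell,\xaff}$, and then invoke $\ell \nmid \omega$ (i.e.\ $\omega_{k}\neq 0$) to conclude that this degree equals $k$. The only cosmetic difference is that the paper carries out the pairing computation via the contraction rule \eqref{eq:exprule} on $(\xaff-\xi)^{\alpha}\star(\omega^{d,\ell,\xaff}\eval_{\xi})$, whereas you use the evaluation formula \eqref{eq:polyexprep} together with the translation $\xaff\mapsto\xaff+\xi$; these are equivalent bookkeeping devices for the same calculation.
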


\begin{proof}
    Let $N \in \NN$ be such that $\mathfrak{m}_{\xi}^N \subseteq I_{\xi}$ for $|\alpha| \geq N$.
    By 
    \Cref{rmk:Ixiperp}, this means
    \begin{equation*}
        (\xaff-\xi)^{\alpha} \in I_{\xi} \iff \forall \varphi \in I_{\xi}^{\perp}, \quad (\xaff-\xi)^{\alpha} \star \varphi = 0 \iff (\xaff-\xi)^{\alpha} \star \invsys{\omega^{d,\ell,\xaff}(\zaff) \, \eval_{\xi}(\zaff)} = 0.
    \end{equation*}
    In particular, we have that $(\xaff-\xi)^{\alpha} \star \omega^{d,\ell,\xaff}(\zaff) \, \eval_{\xi}(\zaff) = 0$.
    By a repeated application of \cref{eq:exprule}, we get
    $$
        (\xaff-\xi)^{\alpha} \star \omega^{d,\ell,\xaff}(\zaff) \, \eval_{\xi}(\zaff) = (\partial_{\zaff} - \xi)^{\alpha} [\omega^{d,\ell,\xaff}(\zaff) \, \eval_{\xi}(\zaff)] = \partial_{\zaff}^{\alpha}\big(\omega^{d,\ell,\xaff}(\zaff)\big) \eval_{\xi}(\zaff).
    $$
    Therefore, the condition $(\xaff-\xi)^{\alpha} \in I_{\xi}$ for $|\alpha| \geq N$ is equivalent to
    \begin{equation*}
        \partial_{\zaff}^{\alpha}\big(\omega^{d,\ell,\xaff}(\zaff)\big) = 0, \quad \forall |\alpha| \geq N,
    \end{equation*}
which means that $\omega^{d,\ell,\xaff}(\zaff)$ is a polynomial of degree $< N$. 
The minimal such $N \in \NN$, which equals $\mathcal{N}(\mathcal{A}_{\xi})$ by \Cref{rmk:nilindex}, is then $N = \deg(\omega^{d,\ell,\xaff}) + 1 = k + 1$, where the last equality follows from $\ell \nmid \omega$, by definition of GAD.
\end{proof}

\begin{remark}
    \Cref{prop:nilindex-socle} rephrases, in the notation of this manuscript, the equality between the socle degree of a local Artinian Gorenstein algebra and the degree of its dual generator.
    This is studied particularly for graded algebras \cite[Chap. IV]{Mac}, while the affine version was considered in \cite[Lemma 1.1]{TonyMemoir}.
    Its connection to GADs was also investigated in \cite[Proposition 4.3]{Bernardi2024}.
\end{remark}

\begin{remark}
    We observe that, with the notation of \Cref{defn:MultOp}, we also have
    $$
    \mathcal{N}(\mathcal{A}_{\xi}) = \min_{N \in \NN} \big\{ M_p \equiv 0 \in \hom(\mathcal{A}_{\xi}, \mathcal{A}_{\xi}), \ \forall p \in \mathfrak{m}_{\xi}^N \big\}.
    $$
    In fact, for every $p \in \mathfrak{m}_{\xi}^{\mathcal{N}(\mathcal{A}_{\xi})} \subseteq I_{\xi}$, we have $M_p(q) = pq \in I_{\xi}$, hence $M_p \equiv 0$. 
    On the other side, for any $p \in I_{\xi} \setminus \mathfrak{m}_{\xi}^{\mathcal{N}(\mathcal{A}_{\xi})-1}$, we have $M_p(1) = p \notin I_{\xi}$, so $M_p \not\equiv 0$. 
\end{remark}

\subsection{Schur factorization}

Let $I = \cap_{i = 1}^s I_{\xi_i} \subset \KK[\xaff]$ be a zero-dimensional ideal, and consider the decomposition of the Artinian algebra $\mathcal{A} = \KK[\xaff]/I$ as in \Cref{thm:artinian}:
\begin{equation*}
    \mathcal{A} = \mathcal{A}_{\xi_1} \oplus \cdots \oplus \mathcal{A}_{\xi_s},
\end{equation*}
where $A_i \simeq \KK[\xaff]/I_{\xi_i}$ is the local algebra associated with the point $\xi_i \in \KK^n$.

For a given matrix $M \in \End(\cl A)$, its \emph{Schur decomposition} is 
$$
M = U \, T \, U^{-1},
$$
where $U$ is a unitary matrix, while $T$ is upper triangular.

\begin{proposition}\label{prop:3.20}
Let $\mathcal{A} = \bigoplus_{k=1}^n \mathcal{A}_{\xi_k}$ as above, and let 
$\lambda \in \KK[\xaff]$ be a generic linear form.
Let $M_{\lambda} = U \, T \, U^{-1}$ be the Schur factorization of the multiplication-by-$\lambda$ operator. 
Then, for any polynomial $g \in \KK[\xaff]$, the conjugated multiplication operator $\tilde{M_g} := U^{-1} \, M_g \, U$ is block upper triangular, i.e.,
    \begin{equation*}
     \tilde{M}_g =    \begin{bmatrix}
    \tilde{M}_g^{(1)}       & * & \dots & * \\
    0       & \tilde{M}_g^{(2)} & \dots & * \\
    \vdots       & \vdots & \ddots & \vdots \\
    0      & 0 & \dots & \tilde{M}_g^{(s)}
\end{bmatrix},
    \end{equation*}
where each block $\tilde{M}_g^{(k)}$ represents the multiplication-by-$g$ in $\mathcal{A}_{\xi_k}$.
\end{proposition}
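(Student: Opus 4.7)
The plan is to combine three ingredients: the generalized eigenspace decomposition of $M_\lambda$ induced by the splitting $\mathcal{A} = \bigoplus_k \mathcal{A}_{\xi_k}$, the flexibility to order eigenvalues in a Schur factorization, and commutativity of $\mathcal{A}$ (so that $M_g$ commutes with $M_\lambda$).

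First I would exploit the genericity of $\lambda$ to ensure that the scalars $\lambda(\xi_1),\ldots,\lambda(\xi_s)$ are pairwise distinct. Then I would identify each $\mathcal{A}_{\xi_k}$ with the generalized eigenspace of $M_\lambda$ for the eigenvalue $\lambda(\xi_k)$: the subspace $\mathcal{A}_{\xi_k}$ is $M_\lambda$-stable (it is an ideal by \Cref{thm:artinian}), and $\lambda-\lambda(\xi_k) \in \mathfrak{m}_{\xi_k}$ acts nilpotently on $\mathcal{A}_{\xi_k}$ by \Cref{rmk:nilindex}, so $\lambda(\xi_k)$ is the sole eigenvalue of $M_\lambda\vert_{\mathcal{A}_{\xi_k}}$. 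A dimension count then closes the identification.

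Next I would select a Schur factorization $M_\lambda = UTU^{-1}$ whose diagonal groups the eigenvalues in blocks: first $d_1:=\dim \mathcal{A}_{\xi_1}$ copies of $\lambda(\xi_1)$, then $d_2$ copies of $\lambda(\xi_2)$, and so on. Such an ordering of the Schur diagonal is always achievable (for instance by iterated Givens-style swaps on the upper triangular form). Setting $F_k:=\mathrm{span}(U_1,\ldots,U_{d_1+\cdots+d_k})$, each $F_k$ is $M_\lambda$-invariant, and the upper triangular shape of $T\vert_{F_k}$ shows that $\prod_{j\le k}(\lambda-\lambda(\xi_j))^{d_j}$ annihilates $F_k$; combined with distinct eigenvalues, this forces $F_k\subseteq\bigoplus_{j\le k}\mathcal{A}_{\xi_j}$, and equality follows by dimension.

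The conclusion is then immediate: since $\mathcal{A}$ is commutative, $M_g$ commutes with $M_\lambda$, so $M_g$ preserves every generalized eigenspace $\mathcal{A}_{\xi_k}$ and hence every flag subspace $F_k$. Therefore $\tilde{M}_g = U^{-1}M_gU$ is block upper triangular with blocks of sizes $d_1,\ldots,d_s$. The $k$-th diagonal block represents the induced action on $F_k/F_{k-1}$, and the splitting $F_k = F_{k-1}\oplus\mathcal{A}_{\xi_k}$ provides a canonical vector space isomorphism $F_k/F_{k-1}\simeq\mathcal{A}_{\xi_k}$ through which this action becomes precisely the multiplication-by-$g$ operator on $\mathcal{A}_{\xi_k}$. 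I expect the main obstacle to be a clean justification that the eigenvalues in the Schur form can be arranged in the prescribed order, and that the resulting flag $F_k$ genuinely coincides with $\bigoplus_{j\le k}\mathcal{A}_{\xi_j}$ rather than merely being an abstract $M_\lambda$-invariant subspace of the same dimension.
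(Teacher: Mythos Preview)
Your proposal is correct and follows essentially the same approach as the paper: both use genericity to separate the eigenvalues $\lambda(\xi_i)$, order the Schur basis so the induced flag coincides with the partial sums $\bigoplus_{j\le k}\mathcal{A}_{\xi_j}$, and then invoke commutativity to carry the block structure over to $M_g$. The only difference is that where the paper cites external references for the eigenvalue structure of $M_\lambda$ and for invariance of the filtration under commuting operators, you supply direct self-contained arguments via nilpotence of $\lambda-\lambda(\xi_k)$ on $\mathcal{A}_{\xi_k}$ and the ideal property of each $\mathcal{A}_{\xi_k}$.
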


\begin{proof}
    By \cite[Theorem 4.23]{EM07}, the eigenvalues of $M_{\lambda}$ are $\{\lambda(\xi_i)\}_{i \in \{ 1, \ldots, s \}}$, namely the evaluation of $\lambda$ at the roots $\xi_i$, with multiplicities $\mu_i = \dim_{\KK} \mathcal{A}_{\xi_i}$.
    Since $\lambda$ is generic, we may assume that the $\lambda(\xi_i) \in \KK$ are pairwise distinct, i.e., we have well-separated eigenvalues.
    
    Let us define the increasing filtration
    \begin{equation*}
        0 = V_0 \subset V_1 \subset \dots \subset V_s = \mathcal{A},
    \end{equation*}
    where $V_i := \bigoplus_{j=1}^k \mathcal{A}_{\xi_j}$ is the sum of the first $i$ local algebras.
    Each subspace $V_i$ is invariant under $M_{\lambda}$, and we have an isomorphism of $\KK$-vector spaces $V_i/V_{i-1} \simeq \mathcal{A}_{\xi_i}$.
    Hence, the action of $M$ on $\mathcal{A}$ decomposes into a sequence of actions on the quotients $V_i/V_{i-1}$.

    Let $M = U \, T \, U^{-1}$ be the Schur factorization of $M$.
    Since the eigenvalues $\lambda(\xi_i)$ are distinct, we can order the Schur basis vectors such that, for every $1 \leq i \leq s$, the first $\mu_1 + \dots + \mu_i$ vectors form a basis for $V_i$.
    On this basis, $T$ has a block upper triangular form
\begin{equation*}
        T =  \begin{bmatrix}
    T^{(1)}       & * & \dots & * \\
    0     & T^{(2)}  & \dots & * \\
    \vdots       & \vdots & \ddots & \vdots \\
   0   & 0 & \dots & T^{(s)}
    \end{bmatrix},
\end{equation*}
    where each $T^{(i)} \in \End(\kk^{\mu_i})$ is upper triangular and has all diagonal entries equal to $\lambda(\xi_i)$. 
    Thus, each $T^{(i)}$ represents the multiplication by $\lambda$ in the quotient $V_i/V_{i-1} \simeq \mathcal{A}_{\xi_i}$.

Since $M_g$ commutes with $M_{\lambda}$, by applying \cite[Proposition 4]{Corless1997} it also preserves the eigenspace filtration $V_0 \subset \dots \subset V_s$, hence it admits the same block decomposition as $T$.
Consequently, $\tilde{M}_g := U^{-1} M_g U$ has the same block triangular form:
    \begin{equation*}
     \tilde{M}_g =    \begin{bmatrix}
    \tilde{M}_g^{(1)}       & * & \dots & * \\
    0       & \tilde{M}_g^{(2)} & \dots & * \\
    \vdots       & \vdots & \ddots & \vdots \\
    0      & 0 & \dots & \tilde{M}_g^{(s)}
\end{bmatrix} .
    \end{equation*}
    Each block $\tilde{M}_g^{(i)}$ acts on $\mathcal{A}_{\xi_i}$, and represents the multiplication operator by $g$ in the local algebra $\mathcal{A}_{\xi_i}$, expressed in the Schur basis.
\end{proof}



\section{Apolar schemes, ranks and minimal decompositions} \label{sec:ApolarSchemes}
In this section, we analyse the apolar schemes associated with a GAD and show that, under regularity hypotheses, a form $f\in \Sd$ has a unique minimal GAD and a unique apolar scheme of minimal length.

Let $\Iprj\subset \S$ be a homogeneous ideal.
We recall that $\Iprj$ is \emph{saturated} if $\big(\Iprj: (x_0, \dots, x_n)\big)=\Iprj$, i.e., $\Iprj$ is the ideal defining a projective scheme.
For the sake of brevity, we will identify projective schemes with their defining homogeneous saturated ideals.
The dimension of such a scheme is $1$ less than the dimension of its \emph{affine cone} $\V(I) \subseteq \mathbb{A}^{n+1}$.
Hence, a zero-dimensional projective scheme $Z$ is supported at finitely many points in $\mathbb{P}^n(\K)$, and its localizations at these points are Artinian algebras.
We say that such $Z$ is \emph{(locally) Gorenstein} if these localizations are Artinian Gorenstein algebras.
Finally, we say that $\Iprj$ is \emph{apolar} to $f\in \Sd$ if $f^* \in \Iprj_{d}^{\perp}$ or, equivalently, if
$\Iprj \subseteq \ann (f^*)$, by regarding $f^*$ as an element of $\S^*$ (see \cite[Lemma 1.15]{Iarrobino1999}).

We recall that the \emph{Castelnuovo-Mumford regularity} $\reg(\S/\underline{I})$ of the $\S$-module $\S/\underline{I}$ is $\max_{i,j} \set{a_{i,j}-i}$ for a minimal free resolution 
$$
0 \rightarrow \oplus_{j} \S(-a_{l,j}) \rightarrow \cdots \rightarrow \oplus_j \S(-a_{1,j}) \rightarrow  \S \rightarrow  \S/\underline{I} \rightarrow  0
$$
(see \cite[chap. 4]{eisenbud2005geometry}).
The \emph{regularity index} $r(\underline{I})$ of the Hilbert function is the minimal degree from which the Hilbert function of $\cl S/\underline{I}$ agrees with its Hilbert polynomial.
When $\underline{I}$ is saturated and (projectively) $0$-dimensional, 
we have, for a generic $v \in \Sone$,
$$
    \reg(\S/\underline{I}) = r(\underline{I}) 
    = \min_{d \in \N} \set{\bm M_{v}: p\in \S_d/\underline{I}_d  \mapsto p\, v \in \S_{d+1}/\underline{I}_{d+1} \textup{ is surjective}}
$$
(see \cite[Theorem 1.69]{Iarrobino1999}).
The regularity $\reg(\S/\underline{I})$ is the minimal degree of a basis in the localization $\cl A$ of $S/\underline{I}$ at $v$. 
When $\underline{I}$ is radical, it is also known as the \emph{interpolation degree} of $\V(\underline{I})$ \cite[chap. 4]{eisenbud2005geometry}.


\subsection{Apolar schemes from GADs}
Consider a GAD $f=\sum_{i=1}^{s} \omega_{i}\, \ell_i^{d-k_i} \in \Sd$ as in \cref{eq:gad}.
By a generic change of coordinates, we can assume that $\xiprj_{i,0}\neq 0$ and, by scaling $\omega_i$ and $\ell_i$, we will always assume that $\xiprj_{i,0}=1$.
We define the homogenized operator of \cref{eq:extendedphi} as
$$
\phiprj := \sum_{i=1}^{s} \omega_{i}^{d,\ell_i, \xaff}\, \eval_{\ell_i}\in \kk[[\zprj]] = \dual{\cl S},
$$
and the homogeneous ideal
\begin{equation}\label{eq:I phiprj}
{I}_{\phiprj} := \textup{Ann}(\phiprj) = \vspan{p\in \cl S_k \mid p\star \phiprj =0}_{k \in \NN}.    
\end{equation}

\begin{lemma}\label{lem:ann w exp}
The ideal $I_{\phiprj}$, as defined above, is apolar to $f$.
\end{lemma}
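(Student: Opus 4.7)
The plan is to identify the apolar dual $f^*$ with the degree-$d$ homogeneous component of $\phiprj$, so that the apolarity of $I_{\phiprj}$ becomes a direct consequence of the annihilator property defining $I_{\phiprj}$.

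First I would apply \Cref{lem:w xi dual} term by term to the GAD $f = \sum_{i=1}^s \omega_i\, \ell_i^{d-k_i}$. Since we are assuming $\xiprj_{i,0}=1$ for every $i$, the set $\{\ell_i, x_1, \dots, x_n\}$ is a basis of $\Sone$, so the complementary basis $\bm v = \xaff$ can be chosen uniformly across all summands. With this choice, the polynomial appearing in the lemma coincides with $\omega_i^{d,\ell_i,\xaff}$, and one obtains
\[
f^* \;=\; \bigg(\sum_{i=1}^s \omega_i^{d,\ell_i,\xaff}(\zprj)\, \eval_{\ell_i}(\zprj)\bigg)^{[d]} \;=\; \phiprj^{[d]}.
\]

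Next, for any homogeneous $p \in I_{\phiprj}$ of degree $e$, I would verify the apolarity condition $\scp{f^*, pq}=0$ for every $q \in \cl S_{d-e}$. If $e>d$, this is vacuous; otherwise $pq \in \cl S_d$, so pairing against the degree-$d$ part of $\phiprj$ agrees with pairing against the full series, and we obtain
\[
\scp{f^*, pq} \;=\; \scp{\phiprj^{[d]}, pq} \;=\; \scp{\phiprj, pq} \;=\; \scp{p\star \phiprj,\, q} \;=\; 0,
\]
where the last equality comes from $p\star \phiprj=0$, the very definition of $I_{\phiprj}=\ann(\phiprj)$. This shows $p\star f^*=0$ in $\cl S^*$, hence $I_{\phiprj} \subseteq \ann(f^*)$, which is exactly the apolarity condition recalled in the preamble.

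The argument is essentially a direct unpacking of \Cref{lem:w xi dual} combined with the definition of an annihilator, so there is no significant obstacle. The only subtle point is the uniform choice of complementary basis $\bm v = \xaff$ across all summands, which is made available precisely by the preliminary normalization $\xiprj_{i,0}=1$; without this normalization, one would either need to work with a different complementary basis per summand or re-do the computation in affine coordinates using $\check f$ via \Cref{lem:polexp trunc}.
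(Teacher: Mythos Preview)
Your proof is correct and follows essentially the same approach as the paper: both identify $f^* = \phiprj^{[d]}$ via \Cref{lem:w xi dual} and then deduce apolarity from the annihilator property defining $I_{\phiprj}$. The only cosmetic difference is that the paper verifies the equivalent condition $f^* \in (I_{\phiprj})_d^{\perp}$ directly, whereas you check the full inclusion $I_{\phiprj}\subseteq \ann(f^*)$; your explicit remark on the uniform choice $\bm v = \xaff$ is a helpful clarification that the paper leaves implicit.
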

\begin{proof}
For every $p \in (I_{\phiprj})_d$, we have $\scp{\phiprj^{[d]}, p} = \scp{\phiprj, p} = \scp{p\star\phiprj, 1} = 0$, hence $\phiprj^{[d]} \in (I_{\phiprj})_{d}^{\perp}$.
By \Cref{lem:w xi dual}, we conclude that
$f^* = \big( \sum_{i=1}^{s} \omega_i^{d,\ell_i,\xaff}(\zprj) \eval_{\ell_i}(\zprj)\big)^{[d]}  = \phiprj^{[d]}\in (I_{\phiprj})_{d}^{\perp}$.
\end{proof}

\begin{proposition}\label{prop:invsys}
For every $\omega \in \Sk$ with $k\le d$ and $\ell = (\xiprj,\xprj) \in \Sone$ with $\xiprj_0=1$, the ideal $\Ann(\omega^{d,\ell,\xaff} \, \eval_{\ell})$ is saturated, supported on $\xiprj$ and  
equals to 
\begin{equation}\label{eq:ann 1}
\Ann(\omega^{d,\ell,\xaff}\, \eval_{\ell}) = \vspan{ p \in \S_i \mid \forall \theta \in \invsys{\omega}^{d, \ell,\xaff}, \ \ \theta(\partial_{\xaff})(p)(\xiprj)= 0}_{i \in \NN}.
\end{equation}
\end{proposition}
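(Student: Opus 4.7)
The strategy is to reduce the projective claim to the affine generalized Kronecker theorem (\Cref{thm:kronecker}) via dehomogenization with respect to $x_0$. Write $\phiprj_{\xi} := \omega^{d,\ell,\xaff}(\zaff)\, \eval_\ell(\zprj)\in \kk[[\zprj]]$, and for homogeneous $p \in \cl S$ set $\bar{p}(\xaff) := p(1,x_1,\dots,x_n) \in R$.

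First, I would establish the Leibniz-type identity $\partial_{z_j}(f \cdot \eval_\ell) = (\partial_{z_j} f + \xiprj_j\, f)\,\eval_\ell$ for each $j$, which by induction gives $p(\partial_{\zprj})(f \cdot \eval_\ell) = p(\partial_{\zprj} + \xiprj)(f) \cdot \eval_\ell$ for every polynomial $p$. Applied to $f = \omega^{d,\ell,\xaff}(\zaff)$, which does not involve $z_0$, and using $\xiprj_0 = 1$, the operator $\partial_{z_0} + \xiprj_0$ acts as the identity on $\omega^{d,\ell,\xaff}$; therefore
\[ p \star \phiprj_{\xi} = \bar{p}(\partial_{\zaff} + \xi)\big(\omega^{d,\ell,\xaff}\big) \cdot \eval_{\ell} \in \kk[[\zprj]]. \]
The same Leibniz identity in the affine setting yields $q \star (\omega^{d,\ell,\xaff}\eval_{\xi}) = q(\partial_{\zaff}+\xi)(\omega^{d,\ell,\xaff})\, \eval_{\xi}$ for every $q \in R$, cf.\ \cref{eq:exprule}. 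Comparing, $p \in \Ann(\phiprj_{\xi})$ iff $\bar{p} \in \ann(\omega^{d,\ell,\xaff}\eval_{\xi})$, reducing the question to an affine annihilator.

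Next, I would apply \Cref{thm:kronecker} to the single-point series $\omega^{d,\ell,\xaff}\eval_{\xi}\in \PolExp$, obtaining
\[ \ann(\omega^{d,\ell,\xaff}\eval_{\xi}) = \{q \in R \mid \forall \theta \in \invsys{\omega^{d,\ell,\xaff}},\ \theta(\partial_{\xaff})(q)(\xi)=0\}. \]
Since $\invsys{\omega^{d,\ell,\xaff}} = \invsys{\omega}^{d,\ell,\xaff}$ by the remark following \Cref{defn:linvsys} (taking $\bm v = \xaff$), and since $\theta(\partial_{\xaff})$ involves only derivatives in $x_1,\dots,x_n$ and hence commutes with the substitution $x_0 = 1$, one has $\theta(\partial_{\xaff})(p)(\xiprj) = \theta(\partial_{\xaff})(\bar{p})(\xi)$. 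Stitching the three equivalences yields \cref{eq:ann 1}.

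For saturation with respect to the irrelevant ideal $(x_0,\dots,x_n)$: if $x_j\, p \in \Ann(\phiprj_{\xi})$ for every $j$, then $\partial_{z_j}(p \star \phiprj_{\xi}) = 0$ for all $j$, so $p \star \phiprj_{\xi}$ is constant. Writing $p \star \phiprj_{\xi} = q(\zaff)\, \eval_\ell$ with $q := \bar{p}(\partial_{\zaff}+\xi)(\omega^{d,\ell,\xaff}) \in \kk[\zaff]$, the $j=0$ equation reads $0 = \partial_{z_0}(q\,\eval_\ell) = q\,\eval_\ell$ (since $\partial_{z_0}q = 0$ and $\xiprj_0 = 1$), forcing $q = 0$ and hence $p \in \Ann(\phiprj_{\xi})$. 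Finally, since $\ann(\omega^{d,\ell,\xaff}\eval_{\xi})$ is $\mathfrak{m}_{\xi}$-primary (\Cref{prop:3.2}) and saturation rules out components at $\{x_0=0\}$, the support of $\Ann(\phiprj_{\xi})$ is exactly $\{\xiprj\}$. The main obstacle is purely organizational: keeping the projective contraction in $\cl S$, the affine contraction in $R$, and the dehomogenization map $p \mapsto \bar p$ consistent throughout; once the Leibniz identity is in place, the rest is bookkeeping.
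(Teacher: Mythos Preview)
Your proof is correct and takes a somewhat different route from the paper. The paper argues by pairing: it tests $p \star (\omega^{d,\ell,\xaff}\eval_\ell)$ against monomials of the form $q\,x_0^j$ with $q \in \S_l(\xaff)$, uses the symmetry $\scp{p\star\phiprj_\xi, q\,x_0^j} = \scp{q(\partial_{\xaff})(\omega^{d,\ell,\xaff}\eval_\ell), p\,x_0^j}$, applies Leibniz to the inner derivative, and then evaluates. You instead compute the series $p \star \phiprj_\xi$ outright via Leibniz, obtain the closed form $\bar p(\partial_{\zaff}+\xi)(\omega^{d,\ell,\xaff})\cdot\eval_\ell$, and hand the vanishing condition to the affine Kronecker theorem (\Cref{thm:kronecker}). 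Your approach is more modular --- it delegates the description of the annihilator to an already-stated result --- while the paper's is self-contained and stays in the dual pairing throughout. For saturation, the paper only uses the single hypothesis $x_0 p \in \Ann(\phiprj_\xi)$ together with \cref{eq:ann 1} (since $\theta(\partial_{\xaff})(x_0 p)(\xiprj) = \xiprj_0\,\theta(\partial_{\xaff})(p)(\xiprj)$), which is marginally slicker than your argument via all $x_j$; and for the support, the paper simply invokes $1 \in \invsys{\omega}^{d,\ell,\xaff}$ rather than citing primariness. One small caveat: \Cref{prop:3.2} describes the orthogonal of a given $\mathfrak m_\xi$-primary ideal, not the primariness of $\ann(\omega^{d,\ell,\xaff}\eval_\xi)$; that fact follows directly from \Cref{thm:kronecker} (or from $\mathfrak m_\xi^{k+1}\subset \ann \subset \mathfrak m_\xi$), so adjust the citation.
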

\begin{proof}

By definition, $p\in \Ann(\omega \eval_{\ell})_i= \Ann(\omega \eval_{\ell})\cap \S_i$ means $\forall q \in \S, \scp{p\star (\omega \eval_{\ell}), q}=0$, which is equivalent to $\forall j,l\in \NN$ and $q \in \S_l(\xaff)$ to
\begin{align*}
0 & = \scp{p\star (\omega^{d,\ell, \xaff} \eval_{\ell}), q\, x_0^{j}}
  = \scp{q \star (\omega^{d,\ell, \xaff} \eval_{\ell}), p\, x_0^{j}} 
  = \scp{q(\partial_{\xaff}) (\omega^{d,\ell, \xaff} \eval_{\ell}), p\, x_0^j}.
\end{align*}
By Leibnitz rule, as in \cref{eq:exprule}, we have
$$
\vspan{\partial_{\xaff}^{\alpha}(\omega^{d,\ell, \xaff}\eval_{\ell})}_{\alpha \in \NN^n}
= \vspan{\partial_{\xaff}^{\alpha}(\omega^{d,\ell, \xaff})}_{\alpha \in \NN^n} \, \eval_{\ell}
$$ so that 
$q(\partial_{\xaff}) (\omega^{d,\ell, \xaff}  \eval_{\ell})= \theta \,\eval_{\ell}$ where $\theta  \in \invsys{\omega}^{d, \ell,\xaff}$. 
Therefore, we have 
\begin{align*}
\scp{q(\partial_{\xaff}) (\omega^{d,\ell, \xaff}  \eval_{\ell}), p\, x_0^j}  
&  
 = \scp{\theta \eval_{\ell}, p\, x_0^j}
 = \theta(\partial_{\xaff})(p \, x_0^j)(\xiprj)
 = \big(\theta(\partial_{\xaff})(p)\, x_0^j\big)(\xiprj)
 = \theta(\partial_{\xaff})(p)(\xiprj).
\end{align*}
This shows the equality \eqref{eq:ann 1}.

Let us prove now that $\ann(\omega^{d,\ell, \xaff} \eval_{\ell})$ is saturated and supported on $\xiprj$.
For all $p\in \S_i$ contained in the saturation $\big(\ann(\omega^{d,\ell, \xaff} \eval_{\ell}):(x_0, \ldots, x_n)\big)$ 
we have $x_j p \in \ann(\omega^{d,\ell, \xaff} \eval_{\ell})$ for $j=0, \ldots, n$. In particular, $x_0\, p\in \ann(\omega\eval_{\ell})$. That is, by \cref{eq:ann 1},  for all $\alpha \in \NN^n$ and $\theta = \partial_{\xaff}^{\alpha}(\omega^{d,\ell, \xaff})$, we have
$$
0 = \theta(\partial_{\xaff})(x_0\, p) (\xprj) 
= \big(x_0\, \theta(\partial_{\xaff})(p) \big) (\xiprj) 
= \theta(\partial_{\xaff})(p) (\xiprj),
$$
which proves that $p\in \ann(\omega^{d,\ell, \xaff}  \eval_{\ell})$, hence $\ann(\omega \eval_{\ell})$ is saturated.
Since $1 \in \invsys{\omega}^{d, \ell,\xaff}$, then $p(\xiprj)=0$, i.e. $\ann(\omega^{d,\ell, \xaff} \eval_{\ell}) \subseteq (x_1-\xiprj_1 x_0, \ldots, x_n - \xiprj_n x_0)$, which proves that $\ann(\omega^{d,\ell, \xaff}  \eval_{\ell})$ is supported on $\xiprj$.
\end{proof}
\begin{example}
Let us consider 
$\omega = x^3 + x^2y + xyz + yz^2 + z^3 \in \S_3$ and $\ell = x+y \in \S_1$.
Since 
$$
    \omega = (-y^2z + yz^2 + z^3) + ( y^2 + yz ) \ell - 2y\ell^2 + \ell^3,
$$
for $\bm v = \{y,z\}$ we obtain
$$
    \omega^{3, \ell, {\bm v}} = \frac{1}{6} (-y^2z + yz^2 + z^3) + \frac{1}{6}( y^2 + yz) - \frac{2}{3}y + 1 \in \S(\bm v),
$$
as defined in \cref{eq:omega v}. The inverse system of $\omega^{3,\ell, \bm v}$ is
$$
    \invsys{\omega}^{3,\ell,\bm v} = \Big\langle \omega^{3, \ell, {\bm v}}, 
    -2 yz + z^2 ,
    -y^2 + 2 yz  + 3 z^2, 
    z,
    y,
    1 \Big\rangle.
$$
Thus, $\rank_{\ell}(\omega) = 6$, which means that the rank of the GAD of $f = \omega\, \ell^0$ is $6$.
By computing the kernel of the Hankel matrix in degree $3$, we check that $\underline{I}:=\Ann(\omega^{3,\ell,\xaff} \, \eval_{\ell})$ is
$$
    \Big(
    (x-y)^3,
    (3 x^2 - 6 xy + 3 y^2 + z^2) z,
    (-3 x + 3 y - z) z^2,
    -8 x^2y + 16 xy^2 + 2 xyz - 8 y^3 - 2 y^2z - 2 yz^2 + 3 z^3 \Big),
$$
whose schematic length is $6$. Its Hilbert function is $(1,3,6,6,\ldots)$ so that $\reg(\S/\underline{I})= 2$.
The elements $g \in \underline{I}$ are such that $\forall \alpha, \beta, \gamma \in \N$, 
\[ \scp{g \star (\omega^{3, \ell, {\bm v}} \eval_{\ell}), x^\alpha y^\beta z^\gamma } = \scp{\omega^{3, \ell, {\bm v}} \eval_{\ell}, g\, x^\alpha y^\beta z^\gamma } = 
\omega^{3, \ell, {\bm v}}(\partial_y,\partial_z)(g\, y^\beta z^\gamma )(1,1,0)= 0.
\]
\end{example}
Let $\varphi = \sum_{i=1}^{s}  {\omega}^{d,\ell_i,\xaff}_{i}(\zaff)\, \eval_{\xi_i}(\zaff)\in \kk[[\zaff]]= \dual{R}$ as in \cref{eq:Iphi}, and let
$$
I_{\varphi}= \textup{Ann}(\varphi)= \set{p\in R\mid p\star \varphi =0}\subset \KKrng = R.
$$
The homogeneous ideal $I_{\phiprj}$ has the following properties:
\begin{lemma}\label{lem:Iphiprj}
The ideal $I_{\phiprj}$ is saturated, zero-dimensional supported on $\set{\xiprj_1, \ldots, \xiprj_s}$, and 
$$
I_{\phiprj} = \mathfrak{h}_{x_0}(I_{\varphi}) = \cap_{i=1}^{s} \Ann(\omega_i^{d,\ell_i,\xaff} \eval_{\ell_i})
$$
where $\mf{h}_{x_0}$ is the homogenization in $x_0$.
\end{lemma}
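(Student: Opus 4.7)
The plan is to prove the three claims together by passing to the affine picture, where the Kronecker-type decomposition of \Cref{thm:kronecker} is already available, and then transferring back to the projective side via saturated homogenization.

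First, I would identify $I_{\phiprj}$ with the intersection $\bigcap_{i=1}^s \Ann(\omega_i^{d,\ell_i,\xaff}\eval_{\ell_i})$. The inclusion $\supseteq$ is immediate from the $\KK$-linearity of the co-product $p\star(-)$: if $p$ annihilates each summand, it annihilates their sum $\phiprj$. For the reverse inclusion, the non-proportionality of the $\ell_i$ in the definition of a GAD ensures the points $\xiprj_i\in\PP^n$ (and hence $\xi_i\in\kk^n$) are pairwise distinct. The cleanest route is then to work in the affine chart $\{x_0\neq 0\}$. For any homogeneous $p\in\S$, write $\tilde p(\xaff):=p(1,\xaff)\in R$ for its dehomogenization. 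Because every $\theta\in\invsys{\omega_i^{d,\ell_i,\xaff}}$ lies in $\kk[\xaff]$ and thus $\theta(\partial_\xaff)$ differentiates only with respect to $x_1,\dots,x_n$, setting $x_0=1$ commutes with the operator, giving
$$\theta(\partial_{\xaff})(p)(\xiprj_i) \;=\; \theta(\partial_{\xaff})(\tilde p)(\xi_i).$$

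Next, I would apply \Cref{thm:kronecker} to $\varphi\in\PolExp$: since the $\xi_i$ are distinct, the theorem yields
$$I_\varphi \;=\; \bigcap_{i=1}^s \bigl\{p\in R \;\big|\; \forall \theta\in\invsys{\omega_i^{d,\ell_i,\xaff}},\; \theta(\partial_\xaff)(p)(\xi_i)=0\bigr\},$$
and each factor is $\mathfrak m_{\xi_i}$-primary. By the commutation identity above, together with the explicit description of $\Ann(\omega_i^{d,\ell_i,\xaff}\eval_{\ell_i})$ from \Cref{prop:invsys}, the $i$-th factor is precisely the dehomogenization of $\Ann(\omega_i^{d,\ell_i,\xaff}\eval_{\ell_i})$; conversely, homogenizing a factor recovers the corresponding saturated $\mathfrak m_{\xiprj_i}$-primary ideal. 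Since each factor is saturated by \Cref{prop:invsys}, intersection and homogenization commute on these components, and the chain of equalities
$$I_{\phiprj} \;=\; \mathfrak{h}_{x_0}(I_\varphi) \;=\; \bigcap_{i=1}^s \Ann(\omega_i^{d,\ell_i,\xaff}\eval_{\ell_i})$$
follows.

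The auxiliary assertions are then immediate: intersections of saturated ideals are saturated, so $I_{\phiprj}$ is saturated; since each factor is $\mathfrak m_{\xiprj_i}$-primary (\Cref{prop:invsys}), the zero set is exactly $\{\xiprj_1,\dots,\xiprj_s\}$, a finite set of points in $\PP^n$, whence $I_{\phiprj}$ is zero-dimensional with the stated support.

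The main obstacle is the interplay between homogenization and ideal intersection, which in general do \emph{not} commute. This is precisely where \Cref{prop:invsys} is crucial: it guarantees that each local factor $\Ann(\omega_i^{d,\ell_i,\xaff}\eval_{\ell_i})$ is already saturated and supported on a single point $\xiprj_i$ in the affine chart $\{x_0\neq 0\}$, which is exactly the situation in which saturated homogenization is bijective and preserves intersections. With this hypothesis in hand, the rest of the argument reduces to a careful bookkeeping of the dehomogenization identity $\theta(\partial_\xaff)(p)(\xiprj_i)=\theta(\partial_\xaff)(\tilde p)(\xi_i)$.
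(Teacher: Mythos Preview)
Your approach uses the same ingredients as the paper's proof: the dehomogenization identity $\theta(\partial_\xaff)(p)(\xiprj_i) = \theta(\partial_\xaff)(\tilde p)(\xi_i)$, the Kronecker decomposition of $I_\varphi$ from \Cref{thm:kronecker}, and the local description of each $\Ann(\omega_i^{d,\ell_i,\xaff}\eval_{\ell_i})$ from \Cref{prop:invsys}.

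There is, however, a genuine gap in your argument for the inclusion $I_{\phiprj} \subseteq \bigcap_i \Ann(\omega_i^{d,\ell_i,\xaff}\eval_{\ell_i})$. You announce that distinctness of the $\xiprj_i$ is the key and then pass to the affine chart, but you never actually connect $I_{\phiprj}$ to $I_\varphi$: nothing you write shows that $p\star\phiprj=0$ forces $\tilde p\star\varphi=0$. Your identity, as stated, relates a \emph{single} differential $\theta(\partial_\xaff)$ acting on $p$ versus on $\tilde p$; it therefore matches each projective factor $\Ann(\omega_i^{d,\ell_i,\xaff}\eval_{\ell_i})$ with the corresponding affine factor in the Kronecker decomposition of $I_\varphi$, but it says nothing about the annihilator of the \emph{sum} $\phiprj$. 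From what you have written, you obtain only $I_{\phiprj}\supseteq \bigcap_i \Ann(\omega_i^{d,\ell_i,\xaff}\eval_{\ell_i}) = \mathfrak{h}_{x_0}(I_\varphi)$, and the asserted chain of equalities does not follow.

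The paper closes this gap with a direct computation that is in fact an immediate consequence of your own identity: taking $\theta=\omega_i^{d,\ell_i,\xaff}$ and summing over $i$ gives $\scp{\phiprj,p^h}=\scp{\varphi,p}$ for every $p\in R$, and hence $\scp{p^h\star\phiprj,q^h}=\scp{p\star\varphi,q}$ for all $q\in R$, so that $p^h\star\phiprj=0$ in $\S^*$ iff $p\star\varphi=0$ in $R^*$. This yields $I_{\phiprj}=\mathfrak{h}_{x_0}(I_\varphi)$ directly, after which the identification with $\bigcap_i \Ann(\omega_i^{d,\ell_i,\xaff}\eval_{\ell_i})$ and the remaining structural claims go through exactly as you outline.
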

\begin{proof}
For $p\in R$, let $p^h = \mf h_{x_0}(p)$ be its homogenization with respect to $x_0$.
Since for every $\theta \in \kk[\xaff]$ we have $\theta(\partial_{\xaff})(p^h)(\xiprj) =
\theta(\partial_{\xaff})(p)(\xi)$, then
$$
\scp{\phiprj, p^h} 
= \sum_{i=1}^{s} \scp{\omega_i^{d,\ell_i, \xaff} (\zprj) \eval_{\xiprj_i}(\zprj), p^h} 
= \sum_{i=1}^{s} \scp{{\omega}_i^{d,\ell_i, \xaff}(\zaff) \eval_{\xi_i}(\zaff), p} 
= \scp{\varphi,p}
$$
We deduce that, for every $q \in R$, we have
$$
\scp{p\star \varphi, q} = \scp{\varphi,p\, q} 
= \scp{\phiprj, (pq)^h}
= \scp{ p^h \star \phiprj, q^h}.
$$
Thus $p \star \varphi=0 \in R^*$ iff $p^h \star \phiprj=0 \in \S^*$.
In other words, $p \in I_{\varphi}$ iff $p^h \in I_{\phiprj}$, thus $I_{\phiprj} = \mathfrak{h}_{x_0}(I_{\varphi})$.

By \Cref{thm:kronecker}, $p\in I_{\varphi}$ iff for every $\theta \in \invsys{\omega_i}^{d,\ell_i,\xaff}$ we have $\theta(\partial_{\xaff})(p)(\xi_i)=0$.
Since, as above, $\theta(\partial_{\xaff})(p)(\xi_i) = \theta(\partial_{\xaff})(p^h)(\xiprj_i)$, we deduce by \Cref{lem:ann w exp} that $p^h\in I_{\phiprj}$ is equivalent to $p^h\in \cap_{i=1}^{s}\Ann(\omega_i \eval_{\ell_i})$. 

Consequently, by \Cref{prop:invsys}, $I_{\phiprj}$ is the intersection of saturated ideals $\ann(\omega_i \eval_{\ell_i})$ supported on $\xiprj_i$, hence it is saturated, zero-dimensional, and supported on $\set{\xiprj_1, \ldots, \xiprj_s}$.
\end{proof}

The homogeneous ideal $I_{\underline{\varphi}} = \ann (\underline{\varphi})$ associated with the GAD defines a punctual projective scheme (\Cref{lem:Iphiprj}), which was first considered in \cite{Bernardi2024} and called the scheme \emph{evinced} by the GAD from which $\underline{\varphi}$ is constructed.
Its local components correspond to the \emph{natural apolar schemes}, as defined in  \cite{bernardi2018polynomials}, i.e. the annihilator of $f_{\textnormal{dp}}$ for the contraction (for $\ell = x_0$).
Our definition, as in \cref{eq:I phiprj}, describes this object as the annihilator of a single explicit polynomial exponential series.


\subsection{GAD rank}

For an homogeneous ideal $\underline{I} \subset \S$, we consider its graded quotient algebra $\underline{\cl A} = \S/\underline{I}$.
We will need the following (classical) lemma:
\begin{lemma}\label{lem:basisloc} Let $d\ge \reg(\underline{\cl A})$, $v\in \S_1$, and assume that $\bm M_{v,d}: a \in \Aprj_d \mapsto a v  \in \Aprj_{d+1}$ is bijective. 
If ${B}$ is a basis of $\Aprj_{d}$, then $B$ is a basis of $\cl A= \S/(\underline{I}, v-1)$.
\end{lemma}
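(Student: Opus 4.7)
The plan is to establish spanning and linear independence of $\pi(B)$ separately in $\cl A = \S/(\underline{I}, v-1)$, where $\pi:\S\to \cl A$ denotes the quotient map. A key preliminary is that bijectivity of $\bm M_{v,d}$ propagates to all higher degrees: since $\S$ is generated in degree $1$, from $v\cdot \underline{\cl A}_d = \underline{\cl A}_{d+1}$ one gets inductively $v\cdot \underline{\cl A}_{j} = \S_1\cdot \underline{\cl A}_{j-1} \cdot v / v = \underline{\cl A}_{j+1}$ for every $j\ge d$; combined with the fact that the Hilbert function of $\underline{\cl A}$ is constant in degrees $\ge d\ge \reg(\underline{\cl A})$, this gives bijectivity of every $\bm M_{v,j}:\underline{\cl A}_j\to \underline{\cl A}_{j+1}$ for $j\ge d$.

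For spanning, I would decompose any $p\in \S$ into graded pieces $p=\sum_i p_i$ and reduce each piece modulo $(\underline{I}, v-1)$ to an element of $\underline{\cl A}_d$. For $i\le d$ the reduction is immediate: $v^{d-i}p_i\equiv p_i \pmod{v-1}$ and the left side lives in $\S_d$. For $i>d$, the surjectivity of $\bm M_{v,i-1}$ supplies $q_{i-1}\in \S_{i-1}$ with $p_i\equiv v\, q_{i-1}\pmod{\underline{I}}$, hence $p_i\equiv q_{i-1}\pmod{(\underline{I}, v-1)}$, and iterating lowers the degree down to $d$. Since $B$ is a basis of $\underline{\cl A}_d$, this shows that $\pi(B)$ spans $\cl A$.

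For linear independence, I would assume a relation $\sum_{b\in B}c_b\, b = p + (v-1)q$ with $p\in \underline{I}$ and $q\in \S$, pass to $\underline{\cl A}$, and match graded components of $\sum c_b\,\bar b = (v-1)\bar q$. This yields the recursion $\bar q_j = v\,\bar q_{j-1}$ for every $j\ne d$, together with the identity $\sum c_b\,\bar b = v\,\bar q_{d-1} - \bar q_d$ at $j=d$. Starting from $\bar q_{-1}=0$, the recursion forces $\bar q_j=0$ for all $j<d$, so $\bar q_d = -\sum c_b\,\bar b$ and inductively $\bar q_{d+k} = -v^k\sum c_b\,\bar b$ for every $k\ge 0$. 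Since $q$ is a polynomial, $\bar q_j$ vanishes for large $j$; the injectivity of $\bm M_v$ on each $\underline{\cl A}_j$ with $j\ge d$ then forces $\sum c_b\,\bar b = 0$ in $\underline{\cl A}_d$, and the basis property of $B$ yields $c_b=0$ for all $b$.

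The decisive input in both parts is the extension of bijectivity of $\bm M_v$ uniformly to every degree $\ge d$; this is what permits the passage between adjacent degrees and, in the independence argument, converts the finite support of $q$ into vanishing at degree $d$. Beyond this, I expect only routine graded bookkeeping and no substantive obstacle.
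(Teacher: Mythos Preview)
Your proposal is correct. Both your argument and the paper's hinge on the same preliminary---that bijectivity of $\bm M_{v,d}$ propagates to all degrees $\ge d$---and both split the proof into spanning and independence. The paper packages this propagation as the direct-sum decomposition $\S_{d+k} = v^{k}\langle B\rangle \oplus \underline{I}_{d+k}$ and then, for linear independence, simply homogenizes a relation $p\in \langle B^\sigma\rangle\cap I$ in a sufficiently high degree $d'$ so that its homogenization lands simultaneously in $v^{d'-d}\langle B\rangle$ and in $\underline{I}_{d'}$, forcing it to vanish. Your recursion in $\underline{\cl A}$, pushing the graded pieces of $\bar q$ upward via $\bar q_{d+k}=-v^{k}\sum c_b\bar b$ and then invoking injectivity once $\bar q_j$ runs out, is a more explicit unpacking of the same mechanism. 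Your direct algebraic argument for propagation (surjectivity from $\S_1\cdot\underline{\cl A}_j=\underline{\cl A}_{j+1}$, then injectivity from the constant Hilbert function) replaces the paper's citation of \cite{Bayer1987}; just note that the constancy of the Hilbert function in degrees $\ge d$ is not a general consequence of $d\ge\reg(\underline{\cl A})$ alone but follows here because the bijectivity hypothesis already forces $\dim\underline{\cl A}_d=\dim\underline{\cl A}_{d+1}$ and hence a constant Hilbert polynomial.
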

\begin{proof}
Let $\set{v_0, v_1, \ldots, v_n}$ be a basis of $\Sone$, with $v_0=v$.
If $d\ge \reg (\underline{I})$ and $\bm M_{v,d}$ invertible, then 
for all $k \in \NN$, $\bm M_{v,d+k}: a \in \underline{\cl A}_{d+k} \mapsto a v \in \Aprj_{d+k+1}$ is invertible (see e.g. \cite[Theorem 1.10]{Bayer1987}), hence $\bm M_{v^k,d}: a \in \Aprj_d \mapsto a v^k \in \Aprj_{d+k}$ is also bijective. 
Thus, if $B$ is a basis of $\Aprj_d$, then 
\begin{equation}\label{eq:basis}
\S_{d+k} = v^{k} \vspan{B} \oplus \Iprj_{d+k}. 
\end{equation}

Let $\sigma: \S \to \S/(v-1), \, p \mapsto p(1, v_1, \dots, v_n)$ be the dehomogenization map, and let $I= \sigma(\underline{I})$ and $B^{\sigma} = \sigma(B)$.
For any $p\in \S/(v-1)$, we consider its homogenization $q \in \S_{d'}$ in degree $d'\ge d$.
Then $\sigma(q)=p$ and $q \in v^k \vspan{B} + \underline{I}_{d+k}$ for some $k\in \NN$, which implies that $p \in \vspan{B^{\sigma}} + I$.
This shows that $B^{\sigma}$ is a generating set for $\cl A= \S/(\underline{I}, v-1)$.

We now show that the polynomials of $B^{\sigma}$ are linearly independent in $\cl A$.
For $p\in \vspan{B^{\sigma}}\cap I$, let $q \in \S_{d'}$ be its homogenization in degree $d'\ge d$.
Since the sum \cref{eq:basis} is direct, we conclude that $q= 0$, therefore $p = \sigma(q)= 0$.
This shows that $B^{\sigma}$ is a basis of $\cl A = \Aprj/(v-1)$ and concludes the proof.  
\end{proof}
The algebra $\cl A = \underline{\cl A}/(v-1)$ is the \emph{localization} of $\underline{\cl A}=\S/\underline{I}$ at $v$, denoted $\underline{\cl A}_{v^*}$, obtained by ``substituting" $v$ by $1$.

We now consider the graded algebra $\S/I_{\phiprj}$ associated with the GAD $f = \sum_{i=1}^{s} \omega_{i}\, \ell_i^{d-k_i}$ (the homogenized analogue of \Cref{defn:evincedbyGAD}, by \Cref{lem:Iphiprj}).
We prove that, if its regularity is small enough, then the considered GAD is the unique minimal one for $f \in \S_d$.
\begin{theorem}\label{thm:rk gad}
Let $1 \leq c \leq d$. If $\reg(\S/I_{\phiprj}) \le \min (d-c, c-1)$, then 
$$
 \gadrank(f) = \rank \bm H_{f^*}^{d-c,c}
$$
and $f$ has a unique minimal GAD, up to terms permutation. 
\end{theorem}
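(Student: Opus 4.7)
The plan is to combine three ingredients: identify the size of the given GAD with the rank of the associated Hankel operator (via \Cref{thm:kronecker}), apply \Cref{thm:hankel tnf} under the regularity hypothesis to match this rank with the Catalecticant rank, and then use \Cref{lem:inclusion} to promote this equality to uniqueness.

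\emph{Matching the Catalecticant rank.} By \Cref{defn:evincedbyGAD} and \Cref{thm:kronecker}, the size of the given GAD equals $\rank \bm H_{\varphi} = \dim \cl A_{\varphi}$. After a generic change of coordinates ensuring $\xiprj_{i,0}\neq 0$ for all $i$, the hypothesis $\reg(\S/I_{\phiprj}) \leq c-1$ together with \Cref{lem:basisloc} (applied with $v = x_0$ and degree $c-1$) provides a basis $B$ of $\cl A_{\varphi}$ whose dehomogenization lies in $R_{\le c-1}$, so $B^{+} \subseteq R_{\le c}$. Analogously, $\reg(\S/I_{\phiprj}) \leq d-c$ yields a basis $B' \subset R_{\le d-c}$ of $\cl A_{\varphi}$. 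Taking $A = R_{\le c}$ and $A' = R_{\le d-c}$, \Cref{thm:hankel tnf}(\ref{thm2}) gives $\rank \bm H_{\check f}^{d-c,c} = \dim \cl A_{\varphi}$. Since $\mathfrak{h}_0$ furnishes degree-preserving isomorphisms intertwining $\bm H_{\check f}^{d-c,c}$ with $\bm H_{f^*}^{d-c,c}$ (as $\mathfrak{h}_0(pq) = \mathfrak{h}_0(p)\mathfrak{h}_0(q)$ for $p\in R_{\le c}, q\in R_{\le d-c}$), the two ranks coincide.

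\emph{Lower bound via alternative GADs.} For any other GAD of $f$ with associated series $\varphi'$, \eqref{eq:fcheckastrunc} gives $\check{f} = (\varphi')^{[\le d]}$. Since $pq \in R_{\le d}$ whenever $p \in R_{\le c}$ and $q \in R_{\le d-c}$, we obtain $\bm H_{\check f}^{d-c,c}(p)(q) = \apply{\varphi', pq} = \bm H_{\varphi'}(p)(q)$, hence $\rank \bm H_{f^*}^{d-c,c} = \rank \bm H_{\check f}^{d-c,c} \le \rank \bm H_{\varphi'}$, and the latter equals the size of that GAD by \Cref{thm:kronecker}. Combined with the previous paragraph, this shows $\gadrank(f) = \rank \bm H_{f^*}^{d-c,c}$ and that the considered GAD is minimal.

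\emph{Uniqueness.} Let $\varphi'$ correspond to any minimal GAD, so $\rank \bm H_{\varphi'} = r := \rank \bm H_{f^*}^{d-c,c}$. The previous computation gives $I_{\varphi'} \cap R_{\le c} \subseteq \ker \bm H_{\check f}^{d-c,c} = I_{\varphi} \cap R_{\le c}$, where the last equality uses \Cref{thm:hankel tnf}(\ref{thm4}). The quotient $R_{\le c}/(I_{\varphi'} \cap R_{\le c})$ embeds in $\cl A_{\varphi'}$ so has dimension at most $r$; it also surjects onto $R_{\le c}/(I_\varphi \cap R_{\le c})$, which has dimension exactly $r$ by \Cref{thm:hankel tnf}(\ref{thm2},\ref{thm4}). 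Both inequalities are therefore equalities, forcing $I_{\varphi'} \cap R_{\le c} = I_{\varphi} \cap R_{\le c}$. Since this common set generates $I_{\varphi}$ by \Cref{thm:hankel tnf}(\ref{thm4}), we obtain $I_\varphi \subseteq I_{\varphi'}$, and \Cref{lem:inclusion} promotes this to $I_\varphi = I_{\varphi'}$. Finally, because a basis of $\cl A_\varphi$ lifts into $R_{\le c-1}\subseteq R_{\le d}$, the restriction map $I_\varphi^{\perp} \simeq \dual{\cl A_\varphi} \to \dual{R_{\le d}}$ is injective; combined with $\varphi^{[\le d]} = (\varphi')^{[\le d]} = \check f$, this gives $\varphi = \varphi'$. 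The decomposition $\varphi = \sum_i \omega_i^{d,\ell_i,\xaff}\, \eval_{\xi_i}$ is then forced by \Cref{thm:struct dual} (the supports are the points of $\V(I_\varphi)$ and the polynomial factors are the unique Macaulay generators of the local Gorenstein duals), and the triples $(\omega_i, \ell_i, k_i)$ are recovered up to permutation from $(\omega_i^{d,\ell_i,\xaff}, \xi_i)$. The most delicate step is the dimension chase in uniqueness: one must verify that the ``low-degree'' kernel of the Catalecticant completely captures $I_\varphi$, which is exactly what the regularity hypothesis delivers through \Cref{thm:hankel tnf}(\ref{thm4}).
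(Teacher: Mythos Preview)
Your proof is correct and follows essentially the same strategy as the paper: use the regularity hypothesis together with \Cref{lem:basisloc} and \Cref{thm:hankel tnf} to identify the size of the given GAD with $\rank \bm H_{f^*}^{d-c,c}$, then invoke \Cref{lem:inclusion} to force $I_{\varphi}=I_{\varphi'}$ for any competing minimal GAD. Two minor differences are worth noting: you make the Catalecticant lower bound for an arbitrary GAD explicit (the paper tacitly assumes the given GAD is minimal from the outset), and your endgame---the dimension chase yielding $I_{\varphi'}\cap R_{\le c}=I_{\varphi}\cap R_{\le c}$, followed by the injectivity of $I_{\varphi}^{\perp}\to \dual{R_{\le d}}$ to conclude $\varphi=\varphi'$---is a clean alternative to the paper's use of the idempotents $\bm u_i$ to match the local components $\omega_i^{d,\ell_i,\xaff}$ directly.
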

\begin{proof}
Let $f= \sum_{i=1}^{s} \omega_i \ell_i^{d-k_i}$ be a minimal GAD, i.e., $r_g := \gadrank(f) = \rank(\sum_{i=1}^{s} \omega_i \ell_i^{d-k_i})$.
Since $I_{\phiprj}$ is zero-dimensional (\Cref{lem:Iphiprj}) and $\reg(\S/I_{\phiprj}) \le \min (d-c, c-1)$, the maps $M_{v,d-c}$ and $M_{v,c-1}$ are bijective for a generic $v \in S_1$.
Hence, by \Cref{lem:basisloc}, the localization $(\S_{c-1})_{x_0^*}\simeq R_{\le c-1}$ (resp. $(\S_{d-c})_{x_0^*}\simeq R_{\le d-c}$) contains a basis $B$ (resp. $B'$) of $\cl A_{\varphi} = R/I_{\varphi}\simeq ({\cl S}_d)_{x_0^*}/(I_{\phiprj,d})_{x_0^*}$ with $1 \in B$ and $B^{+}\subset R_{\le c}$.
Let $A$ (resp. $A'$) be a basis of $R_{\le c}$ (resp. $R_{\le d-c}$), so $B^+\subset \vspan{A} = R_{\le c}$.
Therefore, by \Cref{thm:hankel tnf}, we have $r := \rank H^{d-c,c}_{f^*} = \rank H^{A',A}_{\check{f}} = \sum_{i=1}^{s} \dim \invsys{{\omega}^{d, \ell_i,\xaff}_i} = r_g$. 

Let us now consider another minimal GAD $f = \sum_{i=1}^{s'} \omega_i' (\ell'_i)^{d-k'_i}$.
We know by \cref{eq:fcheckastrunc} that $(\varphi')^{[\le d]}= \check{f}$, hence
%
by \cref{eq:restrict} we have
$H_{\varphi'}^{A',A} 
= H_{\check{f}}^{A',A} 
= H_{{f}^*}^{d-c,c}$. 

Thus, by \Cref{thm:hankel tnf}, we have
$$ K := \ker H_{\varphi}^{A',A} = \ker H^{A',A}_{\check{f}} = I_{\varphi'}\cap \vspan{A}, $$
and $(K)=I_{\varphi}$. Hence, we have $I_{\varphi} \subseteq I_{\varphi'}$, and by \Cref{lem:inclusion}, we deduce that $I_{\varphi} = I_{\varphi'}$ and $\cl A := \cl A_{\varphi} = \cl A_{\varphi'}$.

By \Cref{thm:struct dual}, we have $\set{\xi_1, \ldots, \xi_{s}}= \set{\xi'_1, \ldots, \xi'_{s'}}$ so that $s=s'$ and up to a permutation, $\xi_i= \xi'_i$ for $i=1, \ldots, s=s'$. 
We also deduce that $\dual{\cl A_{\xi_i}}= \invsys{{\omega}_i^{d, \ell_i,\xaff}} \,\eval_{\xi_i}= \invsys{({\omega}'_i)^{d, \ell_i, \xaff}}\,\eval_{\xi_i}$.

Let $\mu_i= \dim \cl A_{\xi_i}$ , $B'_{i}=\set{b'_{i,1}, \ldots, b'_{i,\mu_i}}\subset \vspan{B'}$ be a basis of $\cl A_{\xi_i}$ and  $\bm u_i\in \vspan{B}$
 be the idempotent of $\cl A = \cl A_{\varphi}= \cl A_{\varphi'}$ associated with $\xi_i$ (see \cref{eq:idem}). 
Then by \cite[Lemma 3.16]{Mourrain2017}, 
$\bm u_i \star \varphi  = \omega_i^{d, \ell_i, \xaff} \, \eval_{\xi_i}\in \dual{\cl A_{\xi_i}}$ 
(resp.
$\bm u_i \star \varphi' = ({\omega}'_i)^{d,\ell_i,\xaff}\,  \eval_{\xi_i}\in \dual{\cl A_{\xi_i}}$).
For every $j \in \{1, \ldots, \mu_i\}$ we have
$$
\scp{\bm u_i\star \varphi,b'_{i,j}} = \scp{\varphi, \bm u_i \, b'_{i,j}} = \scp{\check{f}, \bm u_i \, b'_{i,j} } = \scp{\varphi',\bm u_i\, b'_{i,j} } = \scp{\bm u_i\star \varphi', b_{i,j}},
$$
thus $\bm u_i \star \varphi = \bm u_i \star \varphi' $ and ${\omega}_i^{d,\ell_i,\xaff} = ({\omega}'_i)^{d, \ell_i, \xaff}$.
This shows that the minimal GAD is unique, up to the permutation used above to associate the corresponding supports.
\end{proof}


\subsection{Cactus rank}

In this section, we consider projective schemes that are apolar to $f$ and show that, under the same regularity assumption of \Cref{thm:rk gad}, the minimal (by length) of such schemes is unique and of the form $I_{\phiprj}$, where the operator $\phiprj \in \S^*$ arises from the minimal GAD of $f$.
\begin{definition}[Cactus rank]\label{def:cactus}
    We define the \emph{cactus rank} $\cactusrank(f)$ of $f \in \cl S_d$ as the minimal length $\mf l(\underline{I})$
    of an apolar scheme $\Iprj\subset \cl S$, i.e. $f^{*} \in \underline{I}_d^{\perp}$.
    Such an apolar scheme $\underline{I}$ of minimal length is called a \emph{minimal apolar scheme}.
\end{definition}
The name \emph{cactus} was introduced in \cite{Buczyska2013} and \cite{Ranestad2011},
and was previously known in the literature as \emph{minimal scheme length} \cite{Iarrobino1999}.
By \Cref{lem:ann w exp}, we always have $\cactusrank(f) \le \gadrank(f)$.
We show hereafter that these ranks coincide under the regularity condition of \Cref{thm:rk gad}.

\begin{theorem}\label{thm:rk cactus}
Let $1 \leq c \leq d$. If $\reg(\S/I_{\phiprj}) \le \min (d-c, c-1)$, then 
$$
 \cactusrank(f) = \rank \bm H_{f^*}^{d-c,c},
$$
and the scheme $I_{\phiprj}$ arising from the minimal GAD of $f$ is its unique minimal apolar scheme.
\end{theorem}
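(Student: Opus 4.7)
The plan is to sandwich $\cactusrank(f)$ between two copies of $\rank \bm H_{f^*}^{d-c,c}$. The upper bound $\cactusrank(f) \le \rank \bm H_{f^*}^{d-c,c}$ is immediate, since by \Cref{lem:ann w exp} the scheme $I_{\phiprj}$ is apolar to $f$, and by the proof of \Cref{thm:rk gad} its length equals $\gadrank(f) = \rank \bm H_{f^*}^{d-c,c}$. For the reverse inequality, I would show that every apolar scheme $\underline{J}$ satisfies $\underline{J}_c \subseteq \ker \bm H_{f^*}^{d-c,c}$: indeed, for $p \in \underline{J}_c$ and $q \in \S_{d-c}$, the product $pq$ lies in $\underline{J}_d$, so $\scp{p \star f^*, q} = \scp{f^*, pq} = 0$ by apolarity. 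Consequently,
\[ \rank \bm H_{f^*}^{d-c,c} \le \dim \S_c - \dim \underline{J}_c = \HF_{\S/\underline{J}}(c) \le \mf{l}(\underline{J}), \]
and minimizing over apolar $\underline{J}$ yields $\rank \bm H_{f^*}^{d-c,c} \le \cactusrank(f)$, closing the loop.

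For the uniqueness claim, let $\underline{J}$ be any minimal apolar scheme. The chain of inequalities above must collapse into equalities, which forces both $\HF_{\S/\underline{J}}(c) = \mf{l}(\underline{J})$ (whence $\reg(\S/\underline{J}) \le c$) and $\underline{J}_c = \ker \bm H_{f^*}^{d-c,c}$. Applying the same apolarity argument to $I_{\phiprj}$ gives $(I_{\phiprj})_c \subseteq \ker \bm H_{f^*}^{d-c,c}$, while the hypothesis $\reg(\S/I_{\phiprj}) \le c-1$ implies $\dim (I_{\phiprj})_c = \dim \S_c - r$, where $r = \rank \bm H_{f^*}^{d-c,c}$. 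Matching dimensions force $(I_{\phiprj})_c = \ker \bm H_{f^*}^{d-c,c} = \underline{J}_c$.

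Finally, to promote this degree-$c$ agreement to an identification of saturated ideals, I would invoke Castelnuovo-Mumford regularity: the bound $\reg(\S/I_{\phiprj}) \le c-1$ gives $\reg(I_{\phiprj}) \le c$, so by Mumford's criterion $(I_{\phiprj})_d = \S_{d-c} \cdot (I_{\phiprj})_c$ for every $d \ge c$. Substituting $\underline{J}_c$ for $(I_{\phiprj})_c$ yields $(I_{\phiprj})_d = \S_{d-c} \cdot \underline{J}_c \subseteq \underline{J}_d$, and equality follows from the matching Hilbert functions (both sides equal $\dim \S_d - r$ once $d \ge c$, thanks to the regularity bounds on both quotients). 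Since $I_{\phiprj}$ and $\underline{J}$ are both saturated (\Cref{lem:Iphiprj}) and coincide in all sufficiently high degrees, they coincide as homogeneous ideals. The main technical hurdle is the passage from $\reg(\S/I_{\phiprj}) \le c-1$ to the generation identity $(I_{\phiprj})_d = \S_{d-c} \cdot (I_{\phiprj})_c$ for $d \ge c$; once this is in place, the remaining arguments mirror the dimension-counting scheme already used in the proof of \Cref{thm:rk gad}.
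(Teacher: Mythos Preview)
Your argument is correct and takes a genuinely different route from the paper. For the lower bound $\cactusrank(f)\ge \rank \bm H_{f^*}^{d-c,c}$, the paper invokes the fact (from \cite{Buczyska2013}) that a minimal apolar scheme is Gorenstein, dehomogenizes, writes $\check f$ as the truncation of $\theta_f\star\varphi'$ for a generator $\varphi'$ of the inverse system of the localized ideal, and bounds Hankel ranks along the chain $\bm H_{\varphi'}\to \bm H_{\theta_f\star\varphi'}\to \bm H_{\check f}^{A',A}$. You bypass all of this with the elementary catalecticant observation $\underline{J}_c\subseteq\ker\bm H_{f^*}^{d-c,c}$ and the Hilbert-function bound $\HF_{\S/\underline{J}}(c)\le \mf l(\underline{J})$ for a saturated $0$-dimensional $\underline{J}$; this is both shorter and avoids the external Gorenstein input. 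For uniqueness, the paper again passes through the affine algebra $\cl A_\varphi$ and uses \Cref{lem:inclusion} on the annihilators $I_\varphi,I_{\varphi'}$, whereas you work entirely in the graded setting: equality of the degree-$c$ pieces forced by the dimension count, plus the regularity consequence $(I_{\phiprj})_{\ge c}=\S_{\ge 0}\cdot (I_{\phiprj})_c$, yields agreement in all degrees $\ge c$ and hence equality of saturated ideals. The trade-off is that your proof leans on the Mumford regularity package (specifically that $c$-regularity of $I_{\phiprj}$ gives surjectivity of $\S_1\otimes (I_{\phiprj})_e\to(I_{\phiprj})_{e+1}$ for $e\ge c$), while the paper's proof stays within its own Hankel/annihilator toolkit and, as a by-product, identifies the minimal apolar scheme with the annihilator of an explicit polynomial-exponential series.

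Two small remarks. First, $\underline{J}$ is saturated by the convention set at the start of \Cref{sec:ApolarSchemes} (schemes are identified with their saturated ideals), not by \Cref{lem:Iphiprj}, which is about $I_{\phiprj}$ only. Second, the bound $\HF_{\S/\underline{J}}(c)\le\mf l(\underline{J})$ uses that the Hilbert function of a saturated $0$-dimensional ideal is non-decreasing (multiplication by a generic $v\in\S_1$ is injective on $\S/\underline{J}$); this is standard but worth stating, since it is where saturation of $\underline{J}$ enters.
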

\begin{proof}
Consider the ideal $I_{\phiprj}$, which is the homogeneization  of $I_{\varphi}$ by \Cref{lem:Iphiprj}. Its length is $\mathfrak{l}(I_{\phiprj})= \dim R/ I_{\varphi} = r$ where $r := \rank \bm H^{d-c,c}_{f^*}$.
By \Cref{lem:ann w exp}, $I_{\phiprj}$ is apolar to $f$. 
Thus $f^*\in (I_{\phiprj})_{d}^{\perp}$ and 
$r_c := \cactusrank(f)\le \mathfrak{l}(I_{\phiprj})=r$.

We now prove that $r_{c}=r$. Let $\underline{I}$ be a minimal apolar scheme to $f$. 
By \cite[Proposition 2.3]{Buczyska2013}, $\underline{I}$ is zero-dimensional Gorenstein. 
By a generic change of coordinates, we can assume that $x_0 \in \cl S_{1}$ is a non-zero-divisor in $\cl S/\underline{I}$ so that, for $e \ge \reg(\S/\underline{I})$, the multiplication map $\bm M_{x_0} : \S_e/\underline{I}_e \to \S_{e+1}/\underline{I}_{e+1}, \, p \mapsto x_0 p$ is bijective.
By \Cref{lem:basisloc},  $\cl S_e/\underline{I}_e\sim R/I$
where $I$ is the dehomogenization of $\underline{I}$ obtained by substituting $x_0$ by $1$.
By \cite[Chap. 21]{eisenbud1995commutative} and \cite[Definition 2.23]{Mourrain2017}, 
%
$\underline{I}$ Gorenstein implies that 
$$
I^{\perp} = \invsys{\varphi'} = \set{ \theta\star \varphi', \theta\in R},
$$
where $I$ is the localization at $x_0$ of $\underline{I}$ and $\varphi'\in \kk[[\zaff]]= \dual{R}$.
Then $f^*\in \underline{I}_d^{\perp}$ implies that $\check{f} \in I_{\le d}^{\perp} = (I^{\perp})^{[\le d]}$ and there exists $\theta_f\in R$ such that $\check{f}= (\theta_f \star \varphi')^{[\le d]}$.

By \Cref{thm:kronecker}, \Cref{lem:hankel mult} and \cref{eq:restrict}, we have
$$\rank \bm H_{\varphi'}=r_c\ge \rank \bm H_{\theta_f\star \varphi'} 
\ge \rank \bm H_{\theta_f\star \varphi'}^{A',A} = \rank \bm H_{\check{f}}^{A',A} = \rank\bm H_{{f}^*}^{d-c,c} = r,
$$
where $A$ (resp. $A'$) is a basis of $R_{\le c}$ (resp. $R_{d-c}$).
This proves that $r_c=r$.

We conclude using \Cref{lem:inclusion} and \Cref{lem:hankel mult} that $I_{\varphi} = I_{\theta_f\star \varphi'} = I_{\varphi'}$, so that 
$\underline{I}=I_{\phiprj}$ is the unique scheme  apolar to $f$ of minimal length $r_c=r$.
\end{proof}

\Cref{thm:rk cactus} can also be proved by noticing that the scheme $I_{\phiprj}$ defined by the unique minimal GAD of $f$ is a \emph{tight annihilating scheme} for $f$ (see \cite[Definition 5.1]{Iarrobino1999}), by \Cref{thm:rk gad}, and observing that $\reg(\S/I_{\phiprj}) \le \min (d-c, c-1)$ implies $2\,\reg(\S/I_{\phiprj}) +1 \leq d$, hence this minimal scheme is unique by \cite[Theorem 5.3, E.]{Iarrobino1999}.
We will see in \Cref{sec:expl jarek}, an example where the regularity hypothesis is not satisfied
and $\cactusrank(f) < \gadrank(f)$.

\section{Algorithm and experimentation} \label{sec:AlgoExp}

We present now an algorithm, based on \Cref{thm:hankel tnf}, to compute the unique minimal GAD of a symmetric tensor \( f \in \Sd \) prescribed by \Cref{thm:rk gad}, under its regularity assumptions.

\subsection{Decomposition Algorithm}
The algorithm proceeds as follows:
\begin{algorithm}[H]
\caption{\texttt{gad\_decompose}\label{algo:gad}}
\begin{algorithmic}[1]

\Function{gad\_decompose}{$f$}
    \State \textbf{Input}: $f\in \S_d$
    \State Perform a random change of coordinates to obtain $\xiprj_{i,0} \neq 0$
    \State Compute a basis $B$ via SVD
    \State Build multiplication matrices $M = \{M_{x_1},\dots,M_{x_n}\}$ (see \texttt{mult\_matrices}) 
    \State Compute multiplicities $\mu_i$ of the points $\xiprj_i$ (see \texttt{multiplicities})
    \State Compute local blocks of multiplication $M_{x_j}^{(i)}$(see \texttt{local\_mult})
    \State Extract point coordinates $\xi_{i,j}=\frac{1}{\mu_i} \textnormal{Trace}(\tilde{M}_{x_j}^{(i)})$ and define $\ell_i = x_0 + x_1 \xiprj_{i,1} + \dots + x_n \xiprj_{i,n}$
    \State Compute nil-indices $\nu_i = k_i + 1$ from the nilpotence of $(x_j - \xiprj_{i,j})$ 
    (see \texttt{nil\_index})
    \If { $k_i > d $}
        \State \textbf{error:} a nilpotency index exceeds the degree bound
        \EndIf
    \State Compute $\omega_i\in \cl S_{k_i}$ by solving the Vandermonde-like system \( f = \sum_i \omega_i \ell_i^{d-k_i} \), using the least square method, in the unknown coefficients of $\omega_i$.
    \State \textbf{Output}: $\ell_i, \omega_i, \mu_i$ such that $f = \sum_i \omega_i \ell_i^{d-k_i}$ and $\gadrank(f) = \sum_i \mu_i$.
\EndFunction
\end{algorithmic}
\end{algorithm}

\noindent{} \textbf{Construction of multiplication matrices \( M_{x_j} \) (\texttt{mult\_matrices}):} for each variable \( x_j \in \{x_0, \dots, x_n\} \):
    \begin{itemize}
        \item Construct Hankel matrices $H_j = H^{A', x_j A}$, where $A'$ is the set of monomials up to degree $c = d- \lfloor \frac{d-1}{2} \rfloor$ and $A$ set of monomials up to degree $c-1$. 
        \item Compute the linear combination
       $\underline{H} = \sum_{j=0}^n \lambda_j \, H_j$,
        where $\lambda_j$ are sampled from the standard normal probability distribution.
        \item Perform SVD algorithm on $\underline{H}$ and compute matrices $M_{x_j}= (H_0^{B,B})^{-1} H_j^{B,B}$.
    \end{itemize}

\noindent{} \textbf{Extraction of multiplicities (\texttt{multiplicities}):}
    to compute the multiplicities $\mu_i$:
    \begin{itemize}
        \item Compute the linear combination:
            $\underline{M} = \sum_{j=1}^n \lambda_j \, M_{x_j}$, 
        where $\lambda_j$ are sampled from the standard normal distribution.
        \item Compute the eigenvalues of $\underline{M}$ by performing Schur factorization $\underline{M} = U \, T \, U^{-1}$.
        \item Apply clustering techniques to compute the multiplicities $\mu_i$ of its eigenvalues, that is the multiplicities $\dim \cl A_{\xi_i}$ of the points $\xi_i$.
    \end{itemize}
 
\noindent{} \textbf{Block decomposition (\texttt{local\_mult}):} to compute the local sub-blocks of the multiplication operators:
    \begin{itemize}
        \item Using the Schur basis, compute the matrices $\tilde{M}_{x_j} = U^{T} \, M_{x_j} \, U$, for $j=1, \dots, n$.
        \item Extract the diagonal blocks $\tilde{M}_{x_j}^{(i)}$ of size $\mu_i \times \mu_i$, for $i=1,\dots,s$.
    \end{itemize}

    
    
\noindent{} \textbf{Computation of component degrees (\texttt{nil\_index}):} The degrees \( k_i =\nu_i-1\) of the  components \( \omega_i \) are determined from the nilpotency $\nu_i= \mathcal{N}(\mathcal{A}_{\xi_i})$ of the blocks \( \tilde{M}_{x_j}^ {(i)}\). 
    

\vspace{1em}

\begin{theorem}
If $f\in \Sd$ has a GAD $f=\sum_{i=1}^{s} \omega_{i}\, \ell_i^{d-k_i}$ such that $\reg(\S/I_{\phiprj})\le min (d-c, c-1)$, 
then \Cref{algo:gad} computes this GAD, which is the unique minimal GAD of $f$.
\end{theorem}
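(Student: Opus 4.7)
The overall strategy is to verify, step-by-step, that each subroutine of \Cref{algo:gad} recovers a specific algebraic invariant of the (unique) minimal GAD of $f$, invoking the structural results of \Cref{sec:algop-GAD,sec:ApolarSchemes}. Uniqueness itself follows directly from \Cref{thm:rk gad}: the regularity hypothesis $\reg(\S/I_{\phiprj}) \le \min(d-c, c-1)$ guarantees that the given GAD is the unique minimal one, so it is enough to argue that the algorithm produces some minimal GAD. After the generic change of coordinates, we may assume $\xiprj_{i,0}=1$ for all $i$, so $\ell_i = x_0 + \xi_{i,1} x_1 + \cdots + \xi_{i,n} x_n$ and the affine analysis of \Cref{sec:algop-GAD} applies.

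The plan is to invoke \Cref{thm:hankel tnf} with $c = d - \lfloor (d-1)/2 \rfloor$ to justify the linear-algebra core of the algorithm. The regularity hypothesis ensures (via \Cref{lem:basisloc}, applied to the localization at $x_0$) that the localized quotient $\cl A_{\varphi}$ admits a basis $B \subset R_{\le c-1}$ with $1 \in B$ and $B^+ \subset R_{\le c}$, and a basis $B' \subset R_{\le d-c}$. The SVD step of \texttt{mult\_matrices} numerically selects such a basis $B$ from the column space of the Hankel matrix $H_{\check f}^{A',A}$; by parts \eqref{thm1}--\eqref{thm2} of \Cref{thm:hankel tnf}, the rank of this matrix equals $\dim \cl A_{\varphi} = \gadrank(f)$, and $H_0 = H_{\check f}^{B',B}$ is invertible. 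Then part \eqref{thm3} of the same theorem yields the multiplication matrices $M_{x_j} = H_0^{-1} H_j$ acting on $\cl A_{\varphi}$.

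For the geometric/structural extraction, I would invoke \Cref{thm:artinian,thm:struct dual} to write $\cl A_{\varphi} \simeq \bigoplus_{i=1}^s \cl A_{\xi_i}$ and apply \Cref{prop:3.20}: a generic linear combination $\underline M = \sum_j \lambda_j M_{x_j}$ has well-separated eigenvalues (one distinct cluster per support $\xi_i$), so its Schur factorization $\underline M = U T U^{-1}$ simultaneously block-triangularizes all $M_{x_j}$ into diagonal blocks $\tilde M_{x_j}^{(i)}$ of size $\mu_i = \dim \cl A_{\xi_i}$, each representing multiplication by $x_j$ in $\cl A_{\xi_i}$. The eigenvalues of $\tilde M_{x_j}^{(i)}$ are all equal to $\xi_{i,j}$, hence $\xi_{i,j} = \frac{1}{\mu_i} \mathrm{Trace}(\tilde M_{x_j}^{(i)})$ recovers the supports $\ell_i$ exactly. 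The multiplicities $\mu_i$ are read off from the cluster sizes.

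To recover the $\omega_i$, I would first apply \Cref{prop:nilindex-socle}: the nil-index of $\cl A_{\xi_i}$ equals $k_i + 1$, and it coincides with the nilpotency index of the block $\tilde M_{x_j}^{(i)} - \xi_{i,j} I$ (equivalently, of a generic linear combination of these shifted blocks), which is exactly what \texttt{nil\_index} computes; thus the degrees $k_i$ are correct, and the check $k_i > d$ simply guards against the case where the input is inconsistent with the regularity hypothesis. Once $\ell_i$ and $k_i$ are known, the relation $f = \sum_i \omega_i \ell_i^{d-k_i}$ is linear in the unknown coefficients of each $\omega_i \in \S_{k_i}$, giving an overdetermined linear system whose existence of a solution is guaranteed by the assumption that a GAD of this shape exists, and whose uniqueness (of the $\omega_i$) follows from \Cref{thm:rk gad}. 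The main obstacle is precisely the step relating the computed block-nilpotency index to the degree of $\omega_i$: one must verify that the shift $M_{x_j} - \xi_{i,j} I$ restricted to the block corresponds to multiplication by $x_j - \xi_{i,j}$ in $\cl A_{\xi_i}$, so that its nilpotency index is the nil-index of $\cl A_{\xi_i}$, and that a random linear combination $\sum_j \lambda_j (M_{x_j} - \xi_{i,j} I)$ preserves this nilpotency index generically (which follows from the characterization of $\mathcal{N}(\cl A_{\xi_i})$ as the least $N$ with $\mfm_{\xi_i}^N \subseteq I_{\xi_i}$).
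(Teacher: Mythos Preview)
Your proof plan is correct and follows essentially the same route as the paper's own argument: reduce to the affine setting after a generic change of coordinates, invoke \Cref{thm:hankel tnf} (via \Cref{lem:basisloc}) to justify the rank and multiplication-matrix computations, use \Cref{prop:3.20} for the block Schur triangularization and support recovery via traces, apply \Cref{prop:nilindex-socle} to extract the degrees $k_i$ from nil-indices, and finally solve the linear Vandermonde-type system, with uniqueness supplied by \Cref{thm:rk gad}. Your discussion of why a generic linear combination of shifted blocks detects the nil-index is slightly more explicit than the paper's, but the underlying logic is identical.
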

\begin{proof}
By a generic change of variables and scaling, we can assume that $\xiprj_{i,0}=1$. By \Cref{lem:polexp trunc}
$f^* = \phiprj^{[d]}$ where $\phiprj=\sum_{i=1}^{s} \omega_i^{d,\ell_i, \xaff}\eval_{\ell_i}$ and
$\check{f} = (\varphi)^{[\le d]}$ 
where $\varphi= (\omega_i^{d,\ell_i,\xaff}(\zaff)\,\eval_{\xi_i}(\zaff))^{[\le d]}$ as in \cref{eq:extendedphi}. The assumption $\reg(\S/I_{\phiprj})\le \min (d-c, c-1)$ implies (\Cref{lem:Iphiprj}) that there exist sets $A \subset R_{\le c}$, $A' \subset R_{\le d-c}$ containing bases $B, B'$ of $\cl A_{\varphi}$, such that the hypotheses of \Cref{thm:hankel tnf} hold for the truncated Hankel operator $\bm H := \bm H_{\check{f}}^{A',A}$. In particular, for $H_0 := H_{\check{f}}^{B',B}$ and $H_j := H_{\check{f}}^{B',x_j\, B}$, we have
$\rank H_0= \rank(H) = \dim \cl A_{\varphi}$, and $M_{x_j}= H_0^{-1}H_j$ is the multiplication matrices of multiplication by $x_j$ in the basis $B$ of $\cl A_{\varphi}$ by \Cref{lem:hankel mult}.  
\Cref{algo:gad} builds $H$ (step 3-4) and computes bases $B,B'$. 
By \Cref{prop:3.20}, for a generic linear combination $\underline{M} = \sum_j \lambda_j M_{x_j}$, the Schur factorization produces blocks of size $\mu_i = \dim \cl A_{\xi_i}$, corresponding to the multiple eigenvalues associated with the point $\xi_i$.
For each $x_j$, the blocks $\tilde{M}_{x_j}^{(i)}$ represent the matrix of multiplication by $x_j$ in a basis
of the local algebra $\cl A_{\xi_i}$. 
Its trace is $\mu_i \xi_{i,j}$, which provides the coordinates of the points $\xi_i$ corresponding to the linear forms $\ell_i$.
Within each local algebra $\cl A_{\xi_i}$, the operators of multiplication by $(x_j - \xi_{i,j})$ are nilpotent. The nil-index $\cl N(\cl A_{\xi_i})$ determines the degree of the dual polynomial $\cl N(\cl A_{\xi_i}) - 1 = deg(\omega_i) = k_i$, by \Cref{prop:nilindex-socle}.
By computing the nil-indices from the blocks $\tilde{M}_{x_j}^{(i)}$, \Cref{algo:gad} recovers all the degrees $k_i$ of the weights $\omega_i$.
Once $\ell_i$ and $k_i$ are known, the unknown coefficients of $\omega_i$ are determined from the relation $\check{f} = (\sum_{i=1}^s \omega_i^{d,\ell_i,\xaff} \, \eval_{\xi_i})^{[\le d]}$, by solving a linear system of Vandermonde type. 
By \Cref{thm:rk gad}, this GAD exists and is unique, which ensures that the system is solvable and recovers the $w_i$ of such a decomposition.
\end{proof}

\subsection{Experimentations}
We now present the experimental part of this work.
All experiments are conducted in Julia 1.11.6 using the package \href{https://github.com/enricabarrilli/GAD.jl}{\texttt{GAD.jl}}, which contains the examples presented hereafter.

\subsubsection{Waring decomposition}
We consider the following example to test the numerical accuracy of the \texttt{gad\_decompose} algorithm in the Waring case (simple points). 
The goal is to write a symmetric tensor \( f \in \mathcal{S}_4(\mathbb{C}^3) \) as a sum of powers of linear forms, verifying the reconstruction accuracy.

Fix \( n = 2 \), \( d = 4 \), and consider variables \( x_0, x_1, x_2 \), and consider the tensor \( f \) defined by
\[
f = -3(x_0 + 2x_1 + 2x_2)^4 + (x_0 + x_1 + x_2)^4 + (x_0 + 3x_1 - x_2)^4.
\]
We work in the localization $\S_{x_0^*}$, and take $A' = \{1, x_1, x_2\}$ and $A = \{1, x_1, x_2, x_1^2, x_1x_2, x_2^2\}$, we compute $H = H_{\check{f}}^{A',A}$ of size $3 \times 6$, a submatrix of the catalecticant $H_{\check{f}}^{1,3}$.
We deduce the rank $r=3$ from the SVD of $\underline{H} = \sum_{j=0}^2\lambda_j H^{A',x_j A}$, and compute the multiplication matrices $M_{x_j}$ of size $3 \times 3$.
We then compute a Schur factorization $\underline{M} = Q TQ^t$ of $\underline{M} = \sum_{j=1}^2 \lambda_i M_{x_j}$. From $Q^t M_{x_j} Q$, deduce blocks $M_{x_j}^{(i)}$ of size $1 \times 1$ of local multiplication and we deduce the multiplicities $\mu = [1,1,1]$. The recovered associated linear forms are  
$
  \ell_1 = - 0.1744x_0 - 0.3488x_1 - 0.3488x_2, \,
  \ell_2 = - 0.3255x_0 - 0.3255x_1 - 0.3255x_2, \,
  \ell_3 = - 0.1399x_0 - 0.4196x_1 + 0.1399x_2 $.

We compute the nil-indices [1,1,1], deduce the degrees $[0,0,0]$ of the weights $w_1$, $w_2$ and $w_3$ and solve $f = w_1 \ell_1^4 + w_2 \ell_2^4 + w_3 \ell_3^4$ to get the associated coefficients $w_1 = -3242.32, w_2 = 89.06, w_3 = 2611.85.$

The reconstructed tensor is given by $T = \sum_{i=1}^3 w_i \ell_i^4$,
and the error measured in the Euclidean norm is: $\|f - T\| \approx 9.70 \times 10^{-13}$.
The reconstruction error is consistent with double-precision floating-point arithmetic accuracy.
This example supports the efficiency and numerical reliability of the \texttt{gad\_decompose} algorithm for computing Waring decomposition of symmetric tensors, under the regularity assumption.

\subsubsection{Multiple points case}
We consider the following example to test the numerical accuracy of the \texttt{gad\_decompose} algorithm in the case of a GAD involving multiplicities greater than one. 

Fix \( n = 2 \), \( d = 5 \), and consider the tensor \( f \in \S_5 \) defined by
\[
f = x_0^3 x_1 x_2 + (x_0 + \frac 1 2 x_1 + 2 x_2)^4 (x_0 + x_2).
\]
We work in the localization $\S_{x_0^*}$, take $A' = A = \{1, x_1,x_2, x_1^2, x_1 x_2, x_2^2\}$, and compute $H = H_{\check{f}}^{A',A}$ of size $6 \times 6$, a submatrix of $H_{\check{f}}^{2,3}$.
We compute the rank from the SVD of $\underline{H} = \sum_{j=0}^2 \lambda_j H^{A', x_j A}$ and get $r=6$.
We deduce the multiplication matrices $M_{x_j}$ of size $6 \times 6$.
We compute a Schur factorization $\underline{M} = QTQ^t$ of $\underline{M} = \sum_{j=1}^2 \lambda_j M_{x_j}$. From $Q^t M_{x_j} Q$, we find the blocks $M_{x_j}^{(i)}$ of local multiplication of size $2 \times 2$ and $4 \times 4$, hence deduce the multiplicities $\mu = [2,4]$.
The associated linear forms are
$ \ell_1 = 13.21x_0 + 6.61x_1 + 26.43x_2, \,
  \ell_2 = - 3.62x_0 - 2.04 \times 10^{-15}x_1 - 3.76 \times 10^{-14}x_2$.
We find the nil-indeces $2$ and $3$, deduce the degrees $1$ and $2$ of $w_1$ and $w_2$, and solve $f = w_1 \ell_1^4 + w_2 \ell_2^3$ to get $ w_1 = 3.2810 \times 10^{-5}(x_0 + x_2) - 1.7949 \times 10^{-19}x_1, \,
  w_2 = -1.5305 \times 10^{-15}x_2^2 - 0.0211x_1 x_2 - 8.6353 \times 10^{-18}x_1^2 - 7.7109 \times 10^{-16}x_0 x_2 - 9.0234 \times 10^{-17}x_0 x_1 - 1.2588 \times 10^{-16}x_0^2$.
The reconstructed tensor is $T = \sum_{i=1}^2 w_i \cdot \ell_i^{d-\deg(w_i)}$, while the Euclidean norm of the reconstruction error is $\|f - T\| \approx 7.60 \times 10^{-14}$.
The reconstruction error supports the effectiveness of the algorithm in decomposing symmetric tensors even in the presence of multiple points.

\subsubsection{Random Tests}

Hereafter, we provide a first coarse analysis of the numerical stability of the algorithm \texttt{gad\_decompose}, under small random perturbations of random GAD of a given format, leaving deeper investigations for future work. For the clustering of points, we use agglomerative hierarchical clustering (AHC) with single linkage, based on the Euclidean distance between samples, following the classical scheme introduced by Johnson \cite{johnson1967hierarchical}, implemented in the package {\texttt{Clustering.jl}}. In these experiments we give in input the rank and the number of clusters.

For a fixed decomposition type specified by parameters \((n, d, \mathbf{k})\), where \(\mathbf{k}\) is a vector of the degrees \(k_i\) of the coefficient polynomials \(\omega_i\), we construct a base tensor \(f_0 \in \mathcal{S}_d(\mathbb{R}^{n+1})\) having GAD of the form $f_0 = \sum_{i=1}^s \omega_i \ell_i^{d-k_i},$
where each \(\omega_i\) is a polynomial of degree \(k_i\), and \(\ell_i = (\xiprj_i, \xprj)\) is a general linear form with fixed leading coefficient \(\xi_{i,0} = 1\). 

We then perturb this tensor as follows:
$f = f_0 + \varepsilon R$,
where \(R\) is a random degree-\(d\) homogeneous polynomial in $n+1$ variables, with coefficients drawn from a standard normal distribution and normalized with respect to the apolar norm, i.e., \(\|R\|_a = 1\). 

The resulting tensor \(f\) does not generally lie on the same osculating variety, and the goal is to use \texttt{gad\_decompose} to recover an approximate decomposition and measure the relative reconstruction error
\(
\delta(\varepsilon) := \frac{\|T - f\|_a}{\|f\|_a},
\)
where \(T\) denotes the tensor reconstructed from the output decomposition.


The perturbation magnitude $\varepsilon$ is sampled logarithmically over the range $\varepsilon \in [10^{-14}, 10^0]$, with steps of 0.5 in the exponent, producing 29 perturbation levels. 

For each base tensor $f_0$ and each perturbation level $\varepsilon$, we generate 10 random perturbations. The perturbed tensors are decomposed using \texttt{gad\_decompose} algorithm, and for each case we compute the relative reconstruction error. 

For each perturbation level $\varepsilon$, we then report the median relative reconstruction error over 10 trials removing the NaN values (blue plot), together with the worst-case (maximum red curve) and best-case (minimum green curve) reconstruction errors.

\vspace{1em}






\begin{figure}[!htbp]
    \centering
    \captionsetup{font=small} 

    \begin{subfigure}[b]{0.45\textwidth}
        \centering
        \includegraphics[scale=0.2]{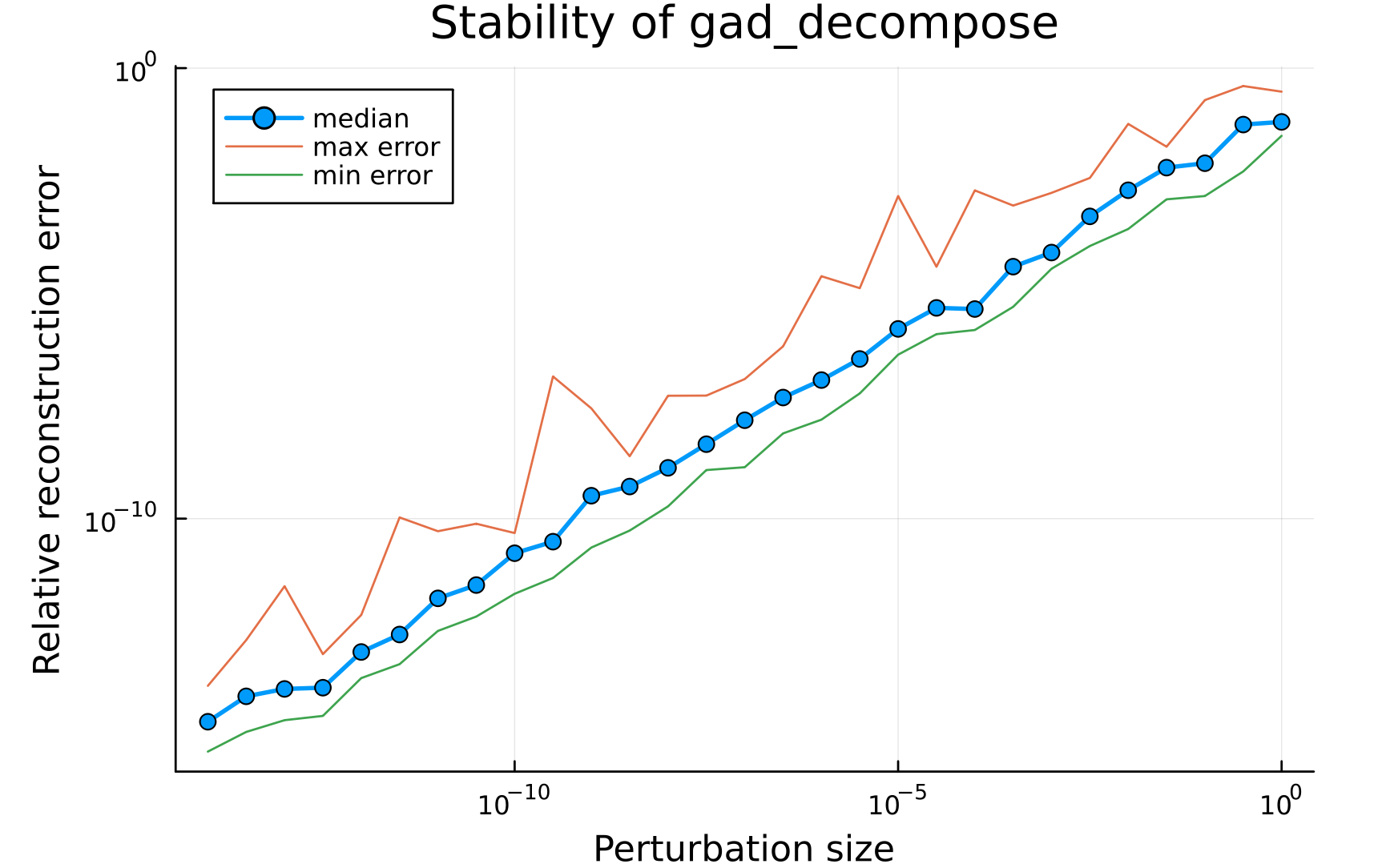}
        \caption{Case \((n, d, \mathbf{k}) = (9, 3, [0,0,0,0,0])\): Waring decomposition - Five simple points.}
        \label{fig:test1}
    \end{subfigure}
    \hfill
    \begin{subfigure}[b]{0.45\textwidth}
        \centering
        \includegraphics[scale=0.2]{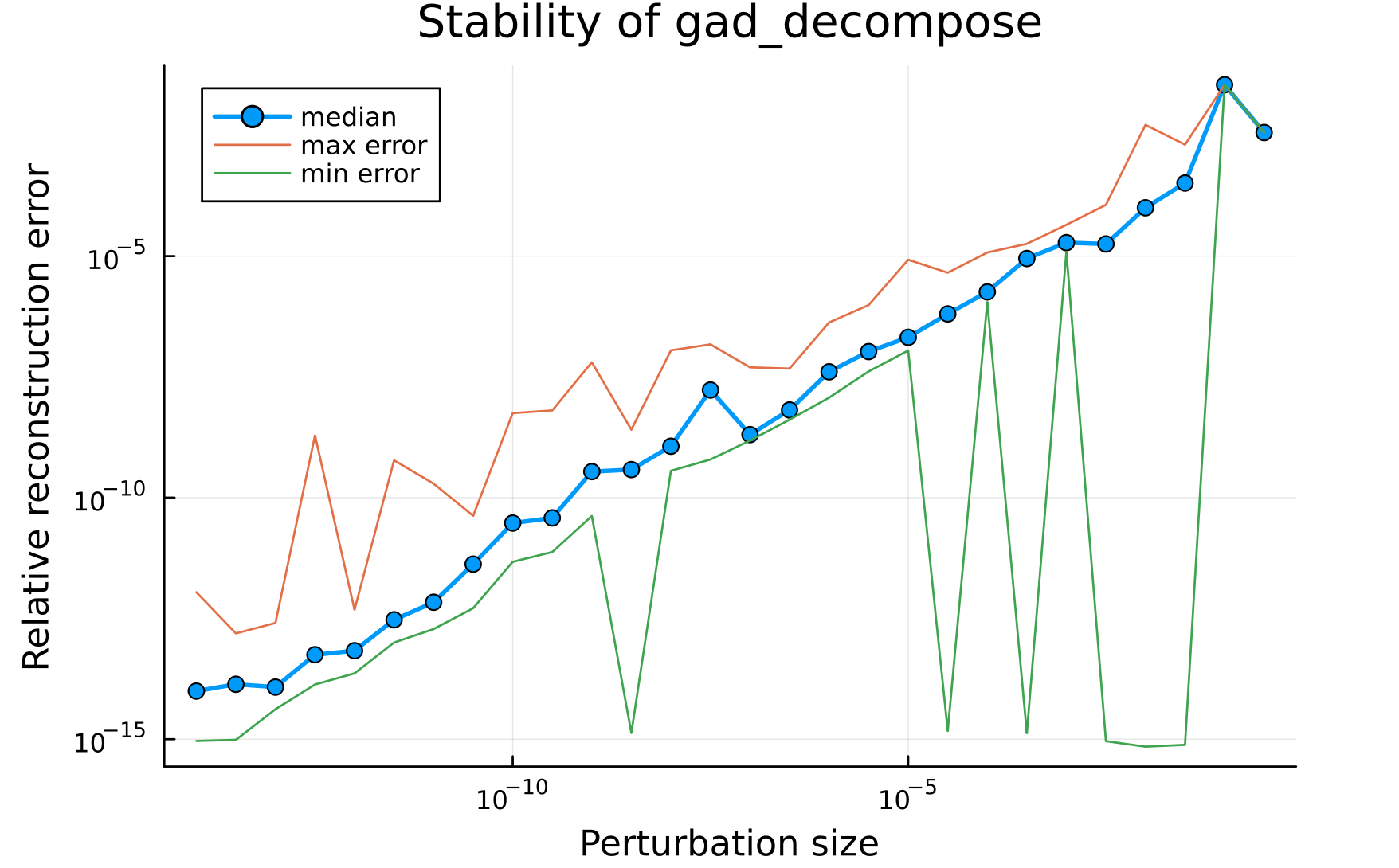}
        \caption{Case \((n, d, \mathbf{k}) = (2, 5, [1,1,0])\): Two points of multiplicity 2 and one simple point.}
        \label{fig:test3}
    \end{subfigure}
\end{figure}

In Figure (\subref{fig:test1}), we analyze the perturbations of Waring decompositions of rank $5$ of the form $f = \sum_{i=1}^5 \omega_i \ell_i^3$ with $\omega_i \in \kk$ in the variables $x_0, \ldots, x_9$.
The relative reconstruction error exhibits clean linear growth across nearly the entire range of \(\varepsilon\), exhibiting the optimal stability \(\delta(\varepsilon) \approx \kappa\, \varepsilon\) for some $\kappa >0$. The algorithm performs robustly in all trials, with tight bounds between minimum and maximum errors. 

The decomposition process benefits from stable clustering and multiplicity detection in the case of simple points.
In particular, when the eigenvalue vector contains distinct and well-separated values, the clustering function identifies groups more efficiently, as the pairwise distance matrix displays a clear separation between clusters. This leads to a stable clustering structure.
Conversely, when many input values are very close or nearly identical, the algorithm must evaluate many small, similar distances, making cluster formation less clear and potentially leading to inefficient or unstable merges.
The experiment therefore suggests that \texttt{gad\_decompose} maintains stable and predictable performance in the simple-point regime, with reduced sensitivity to random fluctuations.

In Figure (\subref{fig:test3}), we consider random perturbations of a GAD of the form $f = \omega_1 \ell_1^4 + \omega_2 \ell_2^4 + \omega_3 \ell_3^{5}$, where $\deg (\omega_1)=\deg(\omega_2)= 1$ and $\deg(\omega_3)=0$ in the variables $x_0, x_1, x_2$.
%
Here we also observe that the reconstruction error exhibits a linear scaling with respect to the perturbation size, but with some irregularities. 
The median curve grows almost linearly in the log–log scale, suggesting that the algorithm maintains stable behavior on average. 
However, the maximum error shows pronounced fluctuations, with occasional peaks that deviate from the general trend.
These outliers are attributable to instabilities in internal steps of the decomposition, such as the estimation of multiplicities and the computation of nil-indices, which are sensitive to randomization and conditioning.
The minimum error curve, although more stable, also displays oscillations in the large-perturbation regime, suggesting that certain perturbations may produce configurations with a very low relative construction error. 
Overall, this experiment shows that \texttt{gad\_decompose} is stable in median performance, but can produce sporadic instabilities depending on the specific realization of the perturbation.

\subsubsection{Nil-index obstruction}\label{sec:expl jarek}

We consider the form \( f \in \S_3(\C^6) \) from (\cite[Example 4.6]{Bernardi2024}):
\begin{align*}
f = {} & 24\, x_0^3 + 70\, x_0^2 x_1 + 75\, x_0^2 x_2 + 70\, x_0^2 x_3 + 180\, x_0^2 x_4 + 10\, x_0^2 x_5 + 10\, x_0 x_1^2 \\ 
& + 70\, x_0 x_2^2 + 360\, x_0 x_2 x_3 
       + 120\, x_0 x_2 x_4 + 60\, x_0 x_3^2 + 60\, x_2^3 + 60\, x_2^2 x_3
\end{align*}
Applying \Cref{algo:gad} with the usual localization at $x_0=1$, and employing $H_{\check{f}}^{1,2}$, we find that $r=6$ and $\vspan{B} = \vspan{B'} = \vspan{1, x_1, \ldots, x_5}$ .
We compute the multiplication matrices $M_{x_j}= (H_{\check{f}}^{B,B})^{-1} H_{\check{f}}^{B,x_j B}$ for $j=1, \ldots, 5$. Their eigenvalues are (approximately) $0$, thus the apolar scheme should be supported at the single point $\xiprj=(1, 0, \ldots, 0)$ and a GAD of $f$ should be of the form $\omega\, x_0^{3-k}$ with  $0\le k\le 3$ and $\omega \in \Sk$. 
However, the nil-index of the matrices $M_{x_j}$ is $5$, which would imply that $\deg(\omega)=4$. This is a contradiction since $\deg(f)=3$,
and the algorithm stops (at line 10).



We check that the operators $M_{x_j}$ are commuting and of dimension $6$. Thus the associated scheme $\underline{I}$ 
has length $\mf l(\underline{I})=6$.
It turns out that 
$\underline{I}$ is generated by $K = \ker H^{1,2}_{f^*}$ (see \cite[Example 4.6]{Bernardi2024}):
\begin{align*}
K  = \big\langle & 
x_1^2 - x_0 x_5, x_1 x_2, x_1 x_3, x_1 x_4, x_1 x_5, -x_0x_3 + x_2^2,
 x_2 x_3 - x_0x_ 4, x_2 x_4 - 6 x_0 x_5, \\
 & -6 x_0 x_5 + x_3^2,
x_3 x_4, x_3 x_5, x_4^2, x_4x_5, x_5^2 \big\rangle
\end{align*}
This minimal apolar scheme is of the form $I_{\phiprj}$ with $\phiprj = g^{4,x_0,\xaff}(\zprj) \eval_{x_0}(\zprj)$ where
\begin{align*}
 g = & \ 24\, x_0^4
 + \frac{280}{3}\,x_0^3 x_1 + 100\,x_0^3 x_2 + \frac {280} 3\,x_0^3 x_3 + 240\,x_0^3 x_4 + \frac{40}{3}\,x_0^3 x_5 + 20\,x_0^2 x_1^2 \\
 & \ + 140\,x_0^2 x_2^2 + 720\,x_0^2 x_2 x_3 + 240\,x_0^2 x_2 x_4 + 120\,x_0^2 x_3^2
 + 240\,x_0 x_2^3 + 240\,x_0 x_2^2 x_3 + 20\, x_2^4.
\end{align*}
We deduce that $\cactusrank(f) = \dim \invsys{g}^{4,x_0,\xaff}=6$.
By analyzing the Hilbert function of $\ann(f^*)$, we deduce that $\gadrank(f) > 6$.
Since $\dim \invsys{{f}}^{3,x_0, \xaff}=7$, we deduce that $\gadrank(f)=7$ and a GAD of $f$ is $f = f\, x_0^0$.
\Cref{thm:rk gad} does not apply since the regularity of the quotient by the ideal $\underline{I}$ associated with this GAD is $\reg(\S/\underline{I}) = 2 > \min \set{3-2,2-1}=1$.

On the other hand, $\reg (\S/I_{\phiprj}) = 1$ and \Cref{thm:rk gad,thm:rk cactus} apply: $\gadrank(g) = \cactusrank(g) = 6$.
We verify that 
$$
\check{g} = \check{f} + \frac{5}{6} z_2^4,  
$$
so that $\check{g}$ is an extension in degree $4$ of 
$\check{f}$, i.e. $\check{g}^{[\le 3]}= \check{f}$. The cactus rank of $f$ can also be recovered using the extension algorithm in \cite{Bernardi20201}, which generalizes the Waring extension algorithm in \cite{Brachat2010}.

\section*{Conclusions and Limitations}

We provide a novel and global characterization of the zero-dimensional scheme associated with a Generalized Additive Decomposition of a form $f$, whose structure can be explicitly deduced from the GAD.
When the regularity of this apolar scheme is small enough, this is the unique minimal apolar scheme, and such a decomposition is the unique minimal GAD.
In this case, we demonstrate how to compute this minimal decomposition, using classical linear algebra tools, such as Schur factorization and eigen computations.

The numerical experiments demonstrate:
\begin{itemize}
  \item Near-optimal linear error growth in the low-perturbation regime.
  \item Consistent recovery of multiplicities and decomposition structure under small noise.
  \item Increased sensitivity for higher-order components and in higher-dimensional settings.
\end{itemize} 
Critical parts of the algorithm include the computation of the rank via SVD and the clustering of eigenvalues, which directly affect the stability and accuracy of the decomposition. 
We have identified the following limitations:
\begin{itemize}
  \item Breakdown of stability when the perturbation pushes the tensor too far from the suitable secant of the osculating varieties.
  \item Occasional failure in identifying correct point multiplicities when the considered tensor has a high degree or a large ambient space.
\end{itemize}
These insights motivate future improvements, such as conditioning-aware modifications, eigenstructure stabilization, and alternative formulations to handle critical cases more robustly.

When the regularity of a minimal GAD is not small enough, the approach cannot be applied directly. We plan to investigate further moment extension techniques, such as \cite{Brachat2010, Bernardi2013, Bernardi20201}, and to analyze the associated moment extension variety in order to provide an efficient method for computing the GAD and Cactus ranks.

\medskip
\paragraph{\emph{Acknowledgements.}}
This work has been supported by European Union’s HORIZON–MSCA-2023-DN-JD programme under the Horizon Europe (HORIZON) Marie Skłodowska-Curie Actions, grant agreement 101120296 (TENORS), by the Research Foundation - Flanders (FWO mandate: 12ZZC23N), and by the BOF project C16/21/002 by the Internal Funds KU Leuven.

\bibliographystyle{siam}
\bibliography{Biblio}

\end{document}